\newcommand{\GG}[1]{}
\theoremstyle{definition}
\newtheorem*{theorem*}{Theorem}
\newtheorem{theorem}{Theorem}
\newtheorem*{rmk*}{Remark}
\newtheorem{proposition}{Proposition}
\newtheorem{lemma}{Lemma}
\newtheorem{corollary}{Corollary}
\newtheorem*{corollary*}{Corollary}
\newtheorem{condition}{Condition}
\apptocmd{\sloppy}{\hbadness 10000\relax}{}{} 
\newcites{sec}{REFERENCES}
\def\obs{\text{obs}}
\def\Var{\text{Var}}
\def\Cov{\text{Cov}}
\def\converged{\stackrel{d}{\longrightarrow}}
\def\Tau{{T}}
\def\apprsim{\overset{.}{\sim}}
\def\QR{\text{QR}}
\def\colpf{\color{black}}
\begin{document}
\doublespacing
\title{\bf Asymptotic Theory of Rerandomization in Treatment-Control Experiments
}
\author{Xinran Li, Peng Ding and Donald B. Rubin
\footnote{Xinran Li is Doctoral Candidate, Department of Statistics, Harvard University,  Cambridge, MA 02138 (E-mail: \url{xinranli@fas.harvard.edu}). Peng Ding is Assistant Professor, Department of Statistics, University of California, Berkeley, CA 94720 (E-mail: \url{pengdingpku@berkeley.edu}).
Donald B. Rubin is Professor, Department of Statistics, Harvard University, Cambridge, MA 02138 (E-mail: \url{rubin@stat.harvard.edu}).
}
}
\date{}
\maketitle
\begin{abstract}
Although complete randomization ensures covariate balance on average, the chance for observing significant differences between treatment and control covariate distributions increases with many covariates. Rerandomization discards randomizations that do not satisfy a predetermined covariate balance criterion, generally resulting in better covariate balance and more precise estimates of causal effects. Previous theory has derived finite sample theory for rerandomization under the assumptions of equal treatment group sizes, Gaussian covariate and outcome distributions, or additive causal effects, but not for the general sampling distribution of the difference-in-means estimator for the average causal effect. To supplement existing results, we develop asymptotic theory for rerandomization without these assumptions, which reveals a non-Gaussian asymptotic distribution for this estimator, specifically a linear combination of a Gaussian random variable and a truncated Gaussian random variable. This  distribution follows because rerandomization affects only the projection of potential outcomes onto the covariate space but does not affect the corresponding orthogonal residuals. We also demonstrate that, compared to complete randomization, rerandomization reduces the asymptotic sampling variances and quantile ranges of the difference-in-means estimator. Moreover, our work allows the construction of accurate large-sample confidence intervals for the average causal effect, thereby revealing further advantages of rerandomization over complete randomization. 
\end{abstract}
{\bf Keywords}: Causal inference; Covariate balance; Geometry of rerandomization; Mahalanobis distance; Quantile range; Tiers of covariates.

\newpage 
\section{Introduction}
Ever since \citet*{fisher1925statistical, fisher1926, fisher:1935}'s seminal work, randomized experiments have become the ``gold standard" for drawing causal inferences. Complete randomization balances the covariate distributions between treatment groups in expectation, thereby ensuring the existence of unbiased estimators of the average causal effect.  Covariate imbalance, however, often occurs in specific randomized experiments, as recognized by \citet{fisher1926} and later researchers \citep[e.g.,][]{student:1938, Greevy01042004, hansen2008covariate, keele2009adjusting, Bruhn:2009, kriegernearlyrandom2016, ATHEY201773}. The standard approach advocated by \citet{fisher:1935}, stratification or blocking, ensures balance with a few discrete covariates; see \citet{cochrancox} and \citet{imbens2015causal} for detailed discussions.

When a randomized allocation is unbalanced, it is reasonable to discard that allocation and re-draw another one until a certain pre-determined covariate balance criterion is satisfied. This is rerandomization, an experimental design hinted by R. A. Fisher \citep[cf.][page 88]{savage:1962} and \citet{cox:1982, cox:2009}, and formally proposed by \citet{rubin2008comment} and \citet{morgan2012rerandomization}. Note that rerandomization is conceptually the same as restricted or constrained randomization \citep[e.g.,][]{yates1948comment, grundy1950restricted, youden1972randomization, bailey1983restricted}. For more historical discussion, see \citet*[][page 45]{fienberg:1980}, \citet{speed1992introduction}, \citet[][page 57]{lehmann2011fisher}, and \citet{morgan2012rerandomization}.

\citet{morgan2012rerandomization} showed that the difference-in-means estimator is generally unbiased for the average causal effect under rerandomization with equal-sized treatment groups, and obtained the sampling variance of this estimator under additional assumptions of Gaussian covariate and outcome distributions and additive causal effects. When rerandomization is applied when these assumptions do not hold, statistical inference becomes more challenging, because the Gaussian distributional theory that is justified by the central limit theorem under complete randomization \citep[cf.][]{hajek1960limiting, lin2013} no longer generally holds. Some applied researchers believe that ``the only analysis that we can be completely confident in is a permutation test or rerandomization test" \citep{Bruhn:2009}. However, randomization-based tests require sharp null hypotheses that all individual causal effects are known from observed values.

Analogous to the repeated sampling properties for complete randomization \citep{Neyman:1923, imbens2015causal}, 
we evaluate the sampling properties of the difference-in-means estimator when rerandomization is used, where all potential outcomes and covariates are regarded as fixed quantities and all randomness arises solely from the random treatment assignments. The geometry of rerandomization reveals non-Gaussian asymptotic distributions, which serve as the foundation for constructing large-sample confidence intervals for average causal effects. 
Furthermore, we compare the lengths of quantile ranges of the asymptotic distributions of the difference-in-means estimator under rerandomization and complete randomization, extending \citet{morgan2012rerandomization, morgan2015rerandomization}'s comparison of their sampling variances.

\section{Framework, Notation, and Basic Results}\label{sec:notation}

\subsection{Covariate imbalance and rerandomization}

Inferring the causal effect of some binary treatment on an outcome $Y$ is of central interest in many studies.
We consider an experiment with $n$ units, with $n_1$ assigned to treatment and $n_0$ assigned to control, $n=n_1 + n_0$, indexed by $i=1,\ldots,n$. Before conducting the experiment, we collect $K$ covariates $\bm{X}_i=(X_{1i}, X_{2i}, \ldots, X_{Ki})$ for each unit, 
which can possibly include transformations of basic covariates and their interactions. 
Let $Z_i$ be the indicator variable for unit $i$ assigned to treatment ($Z_i=1$ if active treatment level; $Z_i=0$ if the control level), and $\bm{Z}=(Z_1,Z_2, \ldots,Z_n)'$ be the treatment assignment column vector. In a completely randomized experiment (CRE), the distribution of $\bm{Z}$ is such that each value, $\bm{z}=(z_1,\ldots,z_n)'$, of $\bm{Z}$ has probability
$
n_1!n_0!/n!,
$
where $\sum_{i=1}^n z_i = n_1$ and $\sum_{i=1}^n (1-z_i) = n_0$, which does not depend on the values of any observed or unobserved covariates. 
The difference-in-means vector of the covariates between treatment and control groups is
$$
\hat{\bm{\tau}}_{\bm{X}}  = \frac{1}{n_1}\sum_{i=1}^n Z_i \bm{X}_i- \frac{1}{n_0}\sum_{i=1}^n (1-Z_i) \bm{X}_i.
$$
Although on average $\hat{\bm{\tau}}_{\bm{X}}$ has mean zero over all $\binom{n}{n_1}$ randomizations, for any realized value of $\bm{Z}$, imbalancedness in covariate distributions between treatment groups often occurs. As pointed out by \citet{morgan2012rerandomization}, with $10$ independent covariates and significance level $5\%$, the probability of a significant difference for at least one covariate is $40\%$.

When significant covariate imbalance arises in a drawn allocation, it is reasonable to discard the unlucky allocation and draw another treatment assignment vector until some a priori covariate balance criterion is satisfied. This is rerandomization, an intuitive experimental design tool apparently personally advocated by R. A. Fisher \citep[see the discussion by][]{rubin2008comment} and formally discussed by \citet{morgan2012rerandomization}.

In general, rerandomization entails the following steps:
\begin{itemize}
\item[(1)] collect covariate data;
\item[(2)] specify a balance criterion to determine whether a randomization is acceptable or not;
\item[(3)] randomize the units to treatment and control groups;
\item[(4)] if the balance criterion is satisfied, proceed to Step (5); otherwise, return to Step (3);
\item[(5)] conduct the experiment using the final randomization obtained in Step (4);
\item[(6)] analyze the data taking into account the rerandomization used in Steps (2)--(4).
\end{itemize}

Although the balance criterion in Step (2) can be general, \citet{morgan2012rerandomization} suggested using the Mahalanobis distance between covariate means in treatment and control groups, and \citet{morgan2015rerandomization} suggested considering tiers of covariates according to their presumed importance in predicting the outcomes in this experiment. We will discuss these two types of rerandomization in detail, and apposite statistical inference after these rerandomizations as implied by Step (6). We then extend the theory to rerandomizations under more general covariate balance criteria in Section \ref{sec:ReG}.

\subsection{Potential outcomes and definitions of finite population quantities}

We use the potential outcomes framework \citep[sometimes called the Rubin Causal Model; see ][]{holland1986statistics, imbens2015causal} to define causal effects,
and let $Y_i(1)$ and $Y_i(0)$ denote the potential outcomes of unit $i$ under active treatment and control, respectively. 
On the difference scale, 
the individual causal effect for unit $i$ is $\tau_i=Y_i(1)-Y_i(0)$, and the average causal effect in the finite population of $n$ units is $\tau_Y = \sum_{i=1}^n \tau_i/n$. 
Let $\bar{Y}(z) = \sum_{i=1}^{n}Y_i(z)/n$ be the finite population average of potential outcomes under treatment arm $z$, $\bar{\bm{X}}$ the finite population average of covariates,
$S_{Y(z)}^2$ the finite population variance (with divisor $n-1$) of the potential outcomes under treatment arm $z$, $S_{Y(z),\bm{X}}=S_{\bm{X},Y(z)}'$ the finite population covariance between potential outcomes and covariates, and $\bm{S}_{\bm{X}}^2$  the finite population covariance matrix of covariates. 
For simplicity, we avoid notation for these quantities' dependence on $n$. Notice that these quantities are fixed, and are not dependent on the randomization or rerandomization scheme.

\subsection{Repeated sampling inference in a CRE}
\label{sec::CRE-neyman}

The observed outcome for unit $i$ is $Y_i = Z_iY_i(1)+(1-Z_i)Y_i(0)$, a function of treatment assignment and potential outcomes. In a CRE, \citet{Neyman:1923} showed that, for estimating $\tau_Y$, the difference-in-means estimator
$$
\hat{\tau}_Y = \frac{1}{n_1}\sum_{i=1}^n Z_i Y_i - \frac{1}{n_0}\sum_{i=1}^n (1-Z_i) Y_i
$$
is unbiased (the expectation of $\hat{\tau}_Y$ over all randomizations is $\tau_Y$), and obtained its sampling variance over all randomizations for constructing a large-sample confidence interval for $\tau_Y$. However, \citet{Neyman:1923}'s interval is not accurate if rerandomization is used, except in an asymptotic conservative sense.

Let $r_1 = n_1/n$ and $r_0 = n_0/n$ be the proportions of units receiving treatment and control. 
According to
the finite population central limit theorem \citep[][]{hajek1960limiting}, under some regularity conditions, the large $n$ sampling distribution, over all randomizations, of
$\sqrt{n}(\hat{\tau}_Y-{\tau}_Y, \hat{\bm{\tau}}_{\bm{X}}')$ is Gaussian with mean zero and covariance matrix $\bm{V}$, where
\begin{align*}
\bm{V}
= 
\begin{pmatrix}
V_{\tau\tau} & \bm{V}_{\tau \bm{x}}\\
\bm{V}_{\bm{x}\tau} & \bm{V}_{\bm{xx}}
\end{pmatrix}
 = 
\begin{pmatrix}
{r_1^{-1}}S_{Y(1)}^2 + {r_0^{-1}}S_{Y(0)}^2 - S_{\tau}^2 & {r_1^{-1}}\bm{S}_{Y(1),\bm{X}}+{r_0^{-1}}\bm{S}_{Y(0),\bm{X}}\\
{r_1^{-1}}\bm{S}_{\bm{X}, Y(1)}+{r_0^{-1}}\bm{S}_{\bm{X},Y(0)} & (r_1r_0)^{-1}\bm{S}_{\bm{X}}^2
\end{pmatrix}.
\end{align*}
Note again that we are conducting randomization-based inference, where all the covariates and potential outcomes are fixed numbers, and randomness comes solely from the treatment assignment. We embed $n$ units into an infinite sequence of finite populations with increasing sizes, and a sufficient condition for the asymptotic Gaussianity of $\sqrt{n}(\hat{\tau}_Y-{\tau}_Y, \hat{\bm{\tau}}_{\bm{X}}')$ is
as follows \citep{fcltxlpd2016}. 
\begin{condition}\label{cond:fp}
As $n\rightarrow\infty$, for $z=0,1$, 
\begin{itemize}
\item[(i)] $r_z$, the proportion of units under treatment arm $z$, has positive limits, 
\item[(ii)] the finite population variances and covariances $S^2_{Y(z)}, S^2_{\tau}, \bm{S}^2_{\bm{X}}$ and $S_{\bm{X},Y(z)}$ have limiting values, 
\item[(iii)] $\max_{1\leq i\leq n} |Y_i(z) - \bar{Y}(z)|^2/n \rightarrow 0$ and 
$
\max_{1\leq i\leq n}\|\bm{X}_i - \bar{\bm{X}}\|_2^2/n \rightarrow 0. 
$ 
\end{itemize}
\end{condition}
We introduce the notation $\apprsim$ for two sequences of random vectors converging weakly to the same distribution. Therefore, 
under CRE and Condition \ref{cond:fp}, 
$
\sqrt{n}(\hat{\tau}_Y-{\tau}_Y, \hat{\bm{\tau}}_{\bm{X}}')
\apprsim
(A, \bm{B}'),
$
where $(A, \bm{B}')$ is a random vector from $\mathcal{N}(\bm{0},\bm{V})$.

\section{Rerandomization using the Mahalanobis distance}
\label{sec::ReM}

\subsection{Mahalanobis distance}
 
The Mahalanobis distance between the covariate means in treatment and control groups is 
$$
\hat{\bm{\tau}}_{\bm{X}}' \{ \Var(\hat{\bm{\tau}}_{\bm{X}})\}^{-1}\hat{\bm{\tau}}_{\bm{X}} = \left( \sqrt{n}\hat{\bm{\tau}}_{\bm{X}}\right)' \bm{V}_{\bm{xx}}^{-1} 
\left( \sqrt{n}\hat{\bm{\tau}}_{\bm{X}} \right),
$$ 
recalling that $  \bm{V}_{\bm{xx}} =  (r_1r_0)^{-1}\bm{S}_{\bm{X}}^2  $ is a fixed and known $K\times K$ matrix in our finite population setting.
A rerandomization scheme proposed by \citet{morgan2012rerandomization} accepts only those randomizations with the Mahalanobis distance less than or equal to $a$, a pre-specified threshold. 
Let
$$
\mathcal{M} = \{\bm{\bm{\mu}}: \bm{\bm{\mu}}' \bm{V}_{\bm{xx}}^{-1}\bm{\bm{\mu}} \leq a \}
$$
denote the acceptance region for $\sqrt{n}\hat{\bm{\tau}}_{\bm{X}}$; that is, a treatment assignment vector $\bm{Z}$ is accepted if and only if the corresponding $\sqrt{n}\hat{\bm{\tau}}_{\bm{X}} \in \mathcal{M}$. Below we use ReM to denote rerandomization using this criterion.

Several practical issues with ReM are worth mentioning. First, if we include transformations and interactions of $\bm{X}$, then ReM can incorporate a wide class of rerandomization schemes. Second, for small sample sizes, it can be that there does not exist any randomization satisfying the balance criterion. However, according to the finite population central limit theorem, the acceptance probability of a randomization is asymptotically $p_a=P(\chi^2_K\leq a)$. Therefore, for relatively large sample size, there usually exist many randomizations satisfying the balance criterion with $a>0$. In practice, we would like to choose the asymptotic acceptance probability to be small, e.g., $p_a = 0.001$. However, we do not want $p_a$ to be too small, such as accepting only those assignments with the smallest Mahalanobis distance. Too small $p_a$ will result in few randomizations, making the repeated sampling inference intractable, even asymptotically, as well as the randomization tests powerless \citep{morgan2012rerandomization}. Furthermore, as illustrated by later examples, the gain from reducing $p_a$ usually decreases as $p_a$ becomes smaller.

\subsection{Multiple correlation between potential outcomes and covariates}\label{sec::R2}

We define the finite population squared multiple correlation between the potential outcome $Y(z)$ and the covariates $\bm{X}$ as $R^2(z)$ for $z=1,0$,
and the finite population squared multiple correlation between the individual causal effect and the covariates as $R^2(\tau)$. Note that $R^2(1), R^2(0)$ and $R^2(\tau)$ are quantities of the finite population, which do not depend on the randomization or rerandomization scheme. 
Similar measures also appeared in \citet{cochran1965} and \citet{rubin1976multivariate}.

We further define an $R^2$-type measure that is a function of the finite population quantities as well as the proportions of the group sizes:
$$
R^2 = \frac{S^2_{Y(1)}}{r_1 V_{\tau \tau}}R^2(1) + 
\frac{S^2_{Y(0)}}{r_0V_{\tau \tau}}R^2(0) - 
\frac{S^2_{\tau}}{V_{\tau \tau}}R^2(\tau).
$$
When the causal effect is additive, $S_{\tau}^2=0$ and $S^2_{Y(1)}=S^2_{Y(0)}$, and then
$
R^2 = R^2(1) = R^2(0)
$
reduces to the squared multiple correlation between $\bm{X}$ and $Y(1)$ or $Y(0)$.

The following proposition states that under CRE $R^2$ is the proportion of  the sampling variance of $\hat{\tau}_Y$ explained by $\hat{\bm{\tau}}_{\bm{X}}$ in linear projection.

\begin{proposition}\label{prop:R_sampling_cor}
The sampling squared multiple correlation between $\hat{\tau}_Y$ and $\hat{\bm{\tau}}_{\bm{X}}$ under CRE is $R^2$, which can be equivalently written as
\begin{align*}
R^2 & = \text{Corr}(\hat{\tau}_Y, \hat{\bm{\tau}}_{\bm{X}}) 
= \frac{r_1^{-1}S_{Y(1)\mid \bm{X}}^2+r_0^{-1}S_{Y(0)\mid \bm{X}}^2
- S_{\tau\mid \bm{X}}^2
}{{r_1^{-1}}S_{Y(1)}^2 + {r_0^{-1}}S_{Y(0)}^2 - S_{\tau}^2},
\end{align*} 
where $S_{Y(z)\mid \bm{X}}^2$ and $S_{\tau\mid \bm{X}}^2$ are the finite population variances of the linear projections of the potential outcomes and individual causal effects on covariates.
\end{proposition}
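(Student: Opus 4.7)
The plan is to start from the definition of the population squared multiple correlation of a scalar with a vector in a linear-regression sense, namely
$$
\text{Corr}^2(\hat{\tau}_Y, \hat{\bm{\tau}}_{\bm{X}}) = \frac{\bm{V}_{\tau\bm{x}}\,\bm{V}_{\bm{xx}}^{-1}\,\bm{V}_{\bm{x}\tau}}{V_{\tau\tau}},
$$
and to compute the numerator directly by substituting the explicit CRE sampling (co)variances given in Section \ref{sec::CRE-neyman}. Using $\bm{V}_{\bm{xx}} = (r_1 r_0)^{-1}\bm{S}_{\bm{X}}^2$ and $\bm{V}_{\bm{x}\tau} = r_1^{-1}\bm{S}_{\bm{X},Y(1)} + r_0^{-1}\bm{S}_{\bm{X},Y(0)}$, the numerator expands into a sum over $z,z' \in \{0,1\}$ of terms of the form $c_{z,z'} \equiv \bm{S}_{Y(z),\bm{X}}(\bm{S}_{\bm{X}}^2)^{-1}\bm{S}_{\bm{X},Y(z')}$, which are the finite-population projection (co)variances onto the covariate space.

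The next step is to rewrite the cross term $c_{1,0}$ in terms of the quantities appearing in the target formula. Since $\tau_i = Y_i(1)-Y_i(0)$ gives $\bm{S}_{\bm{X},\tau} = \bm{S}_{\bm{X},Y(1)} - \bm{S}_{\bm{X},Y(0)}$, one obtains $S^2_{\tau \mid \bm{X}} = c_{1,1} - 2c_{1,0} + c_{0,0}$, and therefore
$$
2c_{1,0} = S^2_{Y(1)\mid \bm{X}} + S^2_{Y(0)\mid \bm{X}} - S^2_{\tau\mid \bm{X}}.
$$
Plugging this identity into the expansion and using $r_0 + r_1 = 1$ to collect coefficients, the terms $r_0/r_1 \cdot c_{1,1}$ and $c_{1,1}$ combine into $r_1^{-1} S^2_{Y(1)\mid \bm{X}}$, and symmetrically for the $(0,0)$ block, leaving precisely
$$
\bm{V}_{\tau\bm{x}}\bm{V}_{\bm{xx}}^{-1}\bm{V}_{\bm{x}\tau}
=
r_1^{-1} S^2_{Y(1)\mid\bm{X}} + r_0^{-1} S^2_{Y(0)\mid\bm{X}} - S^2_{\tau \mid \bm{X}}.
$$
Dividing by $V_{\tau\tau}$ yields the second displayed expression in the proposition.

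Finally, to recover the formula defining $R^2$, I would invoke the elementary identities $S^2_{Y(z)\mid\bm{X}} = R^2(z)\,S^2_{Y(z)}$ for $z=0,1$ and $S^2_{\tau\mid\bm{X}} = R^2(\tau)\,S^2_{\tau}$, which follow from the definition of the multiple correlation as the ratio of projection variance to total variance. Substituting these into the expression just derived and factoring out $1/V_{\tau\tau}$ reproduces the stated definition of $R^2$ term by term, completing the proof.

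The only mildly delicate step is the algebra that cancels the $r_1 r_0$ factor against the $r_z^{-1} r_{z'}^{-1}$ weights and converts the ``$2c_{1,0}$'' term into $S^2_{\tau\mid\bm{X}}$; everything else is definitional. I do not anticipate a real obstacle, since no limits or randomness are used beyond the already-stated form of $\bm{V}$ under CRE.
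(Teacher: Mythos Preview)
Your proposal is correct and follows essentially the same route as the paper's proof: both compute $\bm{V}_{\tau\bm{x}}\bm{V}_{\bm{xx}}^{-1}\bm{V}_{\bm{x}\tau}$ by substituting the explicit CRE covariance blocks, expand into the cross-products $\bm{S}_{Y(z),\bm{X}}(\bm{S}_{\bm{X}}^2)^{-1}\bm{S}_{\bm{X},Y(z')}$, and then use $r_0+r_1=1$ together with the identity $S_{\tau\mid\bm{X}}^2 = S_{Y(1)\mid\bm{X}}^2 + S_{Y(0)\mid\bm{X}}^2 - 2\bm{S}_{Y(1),\bm{X}}(\bm{S}_{\bm{X}}^2)^{-1}\bm{S}_{\bm{X},Y(0)}$ to collapse the expression. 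Your $c_{z,z'}$ bookkeeping is a slightly cleaner packaging of the same algebra, and your final step recovering the defining form of $R^2$ via $S_{Y(z)\mid\bm{X}}^2 = R^2(z)S_{Y(z)}^2$ matches the paper's conclusion.
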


\subsection{Asymptotic sampling distribution of $\hat{\tau}_Y$ under ReM}

With rerandomization, we accept the randomizations satisfying the covariate balance criterion, and therefore the sampling distribution of $\sqrt{n}(\hat{\tau}_Y-\tau_Y)$ over rerandomizations is the same as its sampling distribution over a CRE conditional on $\sqrt{n}\hat{\bm{\tau}}_{\bm{X}}$ satisfying the covariate balance criterion. Although the following proposition holds for rerandomization with more general balance criteria, we first state it for ReM.

\begin{proposition}\label{thm:asymp_dist_maha}
Under ReM and Condition \ref{cond:fp}, 
\begin{align}\label{eq:asym_dist_gen}
\left.
\begin{pmatrix}
\sqrt{n}(\hat{\tau}_Y-\tau_Y) \\
\sqrt{n}\hat{\bm{\tau}}_{\bm{X}}
\end{pmatrix}
 \ \right| \ \sqrt{n}\hat{\bm{\tau}}_{\bm{X}}\in \mathcal{M} \ \ 
\apprsim \ \ 
\left.
\begin{pmatrix}
A\\
\bm{B}
\end{pmatrix}
\ 
\right| \ 
\bm{B} \in \mathcal{M},
\end{align}
recalling from earlier that $(A, \bm{B}')$ is a random vector following $\mathcal{N}(\bm{0},\bm{V})$.
\end{proposition}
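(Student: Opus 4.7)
The plan is to deduce this conditional weak convergence from the unconditional finite-population CLT recorded in Section \ref{sec::CRE-neyman} by means of a Portmanteau-theorem argument on a continuity set. Under Condition \ref{cond:fp}, that CLT gives $\sqrt{n}(\hat{\tau}_Y-\tau_Y,\hat{\bm{\tau}}_{\bm{X}}')\apprsim(A,\bm{B}')$ with $(A,\bm{B}')\sim\mathcal{N}(\bm{0},\bm{V})$. For any bounded continuous test function $f:\mathbb{R}^{K+1}\to\mathbb{R}$, I would decompose the ReM conditional expectation as
\[
\mathbb{E}\!\left[f\bigl(\sqrt{n}(\hat{\tau}_Y-\tau_Y),\sqrt{n}\hat{\bm{\tau}}_{\bm{X}}\bigr)\,\middle|\,\sqrt{n}\hat{\bm{\tau}}_{\bm{X}}\in\mathcal{M}\right]
=\frac{\mathbb{E}\!\left[f\bigl(\sqrt{n}(\hat{\tau}_Y-\tau_Y),\sqrt{n}\hat{\bm{\tau}}_{\bm{X}}\bigr)\,\mathbb{1}(\sqrt{n}\hat{\bm{\tau}}_{\bm{X}}\in\mathcal{M})\right]}{P\bigl(\sqrt{n}\hat{\bm{\tau}}_{\bm{X}}\in\mathcal{M}\bigr)},
\]
and pass to the limit separately in numerator and denominator, eventually taking the ratio to identify the conditional law of $(A,\bm{B})$ given $\bm{B}\in\mathcal{M}$ on the right-hand side of (\ref{eq:asym_dist_gen}).

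The key geometric fact is that $\mathcal{M}$ is a closed ellipsoid whose topological boundary $\partial\mathcal{M}=\{\bm{\mu}:\bm{\mu}'\bm{V}_{\bm{xx}}^{-1}\bm{\mu}=a\}$ has Lebesgue measure zero; since $\bm{V}_{\bm{xx}}$ is positive definite (implicit in the very appearance of $\bm{V}_{\bm{xx}}^{-1}$ in ReM, together with Condition \ref{cond:fp}(ii)), the Gaussian law of $\bm{B}$ charges $\partial\mathcal{M}$ with probability zero. Hence $\mathcal{M}$ is a continuity set of the limit, and the Portmanteau theorem applied to $\sqrt{n}\hat{\bm{\tau}}_{\bm{X}}\converged\bm{B}$ yields the denominator limit $P(\bm{B}\in\mathcal{M})=P(\chi^2_K\leq a)=p_a>0$. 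For the numerator, the integrand $g(x,\bm{y})=f(x,\bm{y})\mathbb{1}_{\mathcal{M}}(\bm{y})$ is bounded and its discontinuity set is contained in $\mathbb{R}\times\partial\mathcal{M}$, which has probability zero under the joint limit $(A,\bm{B})$; a second application of Portmanteau delivers convergence of the numerator to $\mathbb{E}[f(A,\bm{B})\mathbb{1}(\bm{B}\in\mathcal{M})]$. Dividing gives exactly $\mathbb{E}[f(A,\bm{B})\mid\bm{B}\in\mathcal{M}]$, establishing (\ref{eq:asym_dist_gen}).

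The main obstacle is not conceptual but a matter of bookkeeping: the acceptance region $\mathcal{M}$ depends on $n$ through the finite-population matrix $\bm{V}_{\bm{xx}}=\bm{V}_{\bm{xx},n}$, so the indicator $\mathbb{1}_{\mathcal{M}}$ is itself $n$-dependent and not strictly a fixed continuity set of the limit. I would handle this by a Slutsky-type reduction: Condition \ref{cond:fp} forces $\bm{V}_{\bm{xx},n}$ to converge to a positive definite limit, so the symmetric difference between $\{\bm{y}:\bm{y}'\bm{V}_{\bm{xx},n}^{-1}\bm{y}\leq a\}$ and its limiting analogue shrinks to a neighborhood of the limit boundary, which carries zero $\bm{B}$-mass. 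This lets me replace the $n$-dependent indicator by the fixed one up to $o(1)$ error inside the expectation, at which point the Portmanteau argument above applies verbatim.
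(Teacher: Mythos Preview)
Your proposal is correct and follows essentially the same route as the paper: both establish the conditional weak limit by writing the conditional law as a ratio, then applying the Portmanteau theorem to numerator and denominator separately, using that the ellipsoidal boundary is a Gaussian null set. The only packaging difference concerns the $n$-dependence of $\mathcal{M}$ through $\bm{V}_{\bm{xx},n}$: where you propose a symmetric-difference/Slutsky patch to replace the moving indicator by the fixed one, the paper instead augments the weakly converging vector with the deterministic sequence $\bm{V}_{\bm{xx},n}\to\bm{V}_{\bm{xx},\infty}$ and applies the continuous mapping theorem to the criterion $\phi(\bm{\mu},\bm{V}_{\bm{xx}})=\mathbb{1}(\bm{\mu}'\bm{V}_{\bm{xx}}^{-1}\bm{\mu}\leq a)$ viewed as a function of both arguments. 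This is slightly tidier and generalizes immediately to any almost-surely continuous balance criterion; your approach is more hands-on but equally valid for the ellipsoidal case. One small omission: because $\apprsim$ asserts that \emph{both} sides share a common weak limit, you should also note (as the paper does in its Corollary~\ref{cor:asymp_approx_gen}) that the right-hand side $(A,\bm{B})\mid\bm{B}\in\mathcal{M}$, which itself depends on $n$ through $\bm{V}$, converges to the same limiting conditional Gaussian; this follows by the identical argument applied to the Gaussian sequence.
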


Simply stated, $\sqrt{n}(\hat{\tau}_Y-\tau_Y)$ has two parts: the part unrelated to the covariates, which we call $\varepsilon_0$, and thus unaffected by  rerandomization, and the other part related to the covariates, which we call $L_{K,a}$, and thus affected by  rerandomization. 
Therefore, the asymptotic distribution of $\hat{\tau}_Y$ is a linear combination of two independent random variables:
$\varepsilon_0\sim \mathcal{N}(0,1)$ is a standard Gaussian random variable, and
$L_{K,a}$ is a random variable following the distribution of $D_1\mid \bm{D}'\bm{D}\leq a$, where $\bm{D}=(D_1,\ldots,D_K)'\sim \mathcal{N}(\bm{0}, \bm{I}_K)$.

\begin{theorem}\label{thm:asymp_dist_Y_maha}
Under ReM and Condition \ref{cond:fp}, 
\begin{eqnarray}\label{eq:dist_maha_Gaussian_trun_norm}
\sqrt{n}(\hat{\tau}_Y-\tau_Y) \mid  \sqrt{n}\hat{\bm{\tau}}_{\bm{X}} \in \mathcal{M}    
 \apprsim
\sqrt{V_{\tau\tau}}\left( 
\sqrt{1-R^2} \cdot \varepsilon_0 + \sqrt{R^2} \cdot L_{K,a}   \right),
\end{eqnarray} 
where
$\varepsilon_0$ is independent of $L_{K,a}$.
\end{theorem}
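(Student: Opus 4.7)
The strategy is to invoke Proposition 2 to replace the finite-$n$ conditional distribution by the Gaussian conditional distribution of $A\mid \bm{B}\in\mathcal{M}$, where $(A,\bm{B}')\sim \mathcal{N}(\bm{0},\bm{V})$, and then to decompose $A$ by linear projection onto $\bm{B}$:
$$A \ = \ \bm{V}_{\tau\bm{x}}\bm{V}_{\bm{xx}}^{-1}\bm{B} \ + \ E, \qquad E := A - \bm{V}_{\tau\bm{x}}\bm{V}_{\bm{xx}}^{-1}\bm{B}.$$
The first summand is a deterministic function of $\bm{B}$ and is therefore the part ``affected'' by rerandomization, while the second is the part orthogonal to the covariate space. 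A direct covariance calculation gives $\Cov(E,\bm{B})=\bm{0}$, and joint Gaussianity upgrades this to $E$ being independent of $\bm{B}$; hence $E$ is also independent of the event $\{\bm{B}\in\mathcal{M}\}$ and of any function of $\bm{B}$. Its variance is $V_{\tau\tau}-\bm{V}_{\tau\bm{x}}\bm{V}_{\bm{xx}}^{-1}\bm{V}_{\bm{x}\tau}=V_{\tau\tau}(1-R^2)$ by Proposition 1, so $E \stackrel{d}{=} \sqrt{V_{\tau\tau}(1-R^2)}\,\varepsilon_0$ with $\varepsilon_0\sim\mathcal{N}(0,1)$.

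For the projection term I would whiten $\bm{B}$ by setting $\bm{D}=\bm{V}_{\bm{xx}}^{-1/2}\bm{B}\sim\mathcal{N}(\bm{0},\bm{I}_K)$, which turns the acceptance event into $\{\bm{D}'\bm{D}\leq a\}$ and rewrites the projection as $\bm{c}'\bm{D}$, where $\bm{c}'=\bm{V}_{\tau\bm{x}}\bm{V}_{\bm{xx}}^{-1/2}$ and $\|\bm{c}\|^2=\bm{V}_{\tau\bm{x}}\bm{V}_{\bm{xx}}^{-1}\bm{V}_{\bm{x}\tau}=V_{\tau\tau}R^2$. Assuming $\bm{c}\neq\bm{0}$, let $\bm{Q}$ be any orthogonal matrix whose first row is $\bm{c}'/\|\bm{c}\|$. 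Since both the density of $\bm{D}$ and the constraint $\bm{D}'\bm{D}\leq a$ are orthogonally invariant, $\bm{Q}\bm{D}\mid\bm{D}'\bm{D}\leq a$ has the same law as $\bm{D}\mid \bm{D}'\bm{D}\leq a$, so
$$\bm{c}'\bm{D} \mid \bm{D}'\bm{D}\leq a \ \stackrel{d}{=}\ \|\bm{c}\|\,(\bm{Q}\bm{D})_1 \mid \bm{D}'\bm{D}\leq a \ \stackrel{d}{=}\ \sqrt{V_{\tau\tau}R^2}\,L_{K,a}$$
by the very definition of $L_{K,a}$. The degenerate case $\bm{c}=\bm{0}$ (that is, $R^2=0$) is trivial since the projection vanishes and the theorem reduces to the unconditional CLT.

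Summing the two pieces and using that $E$ is independent of $\bm{B}$, and hence of $L_{K,a}$, gives the claimed representation $\sqrt{V_{\tau\tau}}\bigl(\sqrt{1-R^2}\,\varepsilon_0+\sqrt{R^2}\,L_{K,a}\bigr)$. I do not expect any of these steps to be a major obstacle because Proposition 2 already absorbs the hard part—transferring the randomization distribution to a Gaussian conditional distribution via a finite-population CLT. What remains is essentially a textbook Gaussian projection argument together with the orthogonal invariance of the Mahalanobis ellipsoid; the only care needed is to identify the variance of the projection via Proposition 1 so that the coefficients $\sqrt{1-R^2}$ and $\sqrt{R^2}$ appear exactly.
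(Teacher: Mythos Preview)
Your proposal is correct and essentially identical to the paper's proof: invoke Proposition~\ref{thm:asymp_dist_maha}, project $A$ onto $\bm{B}$ so that the residual $E\sim\mathcal{N}(0,V_{\tau\tau}(1-R^2))$ is independent of $\bm{B}$, whiten to $\bm{D}=\bm{V}_{\bm{xx}}^{-1/2}\bm{B}$, and use orthogonal invariance of the standard Gaussian and of the constraint $\bm{D}'\bm{D}\le a$ to reduce $\bm{c}'\bm{D}\mid\bm{D}'\bm{D}\le a$ to $\|\bm{c}\|L_{K,a}$. The paper packages the orthogonal-invariance step as a separate Lemma~\ref{lemma:L}, but the content is the same; your explicit treatment of the degenerate case $R^2=0$ is a small bonus.
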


The coefficients of the linear combination are functions of $R^2$, which measures the association between the potential outcomes and the covariates. When $R^2=0$, the right hand side of (\ref{eq:dist_maha_Gaussian_trun_norm}) becomes a Gaussian random variable, the same as the asymptotic distribution of $\sqrt{n}(\hat{\tau}_Y-\tau_Y)$ under CRE in Section \ref{sec::CRE-neyman}; when $R^2=1$, \eqref{eq:dist_maha_Gaussian_trun_norm} reduces to $\sqrt{V_{\tau\tau}}\cdot L_{K,a}$, a random variable with bounded support $[-\sqrt{aV_{\tau\tau}},\sqrt{aV_{\tau\tau}}]$. Importantly, the definition of $R^2$ is based on linear projections but not linear models of the potential outcomes. Our asymptotic theory is based on the distribution of the randomization without imposing any modeling assumptions on the potential outcomes. Furthermore, under rerandomization, the asymptotic distribution in \eqref{eq:dist_maha_Gaussian_trun_norm} has a clear geometric interpretation as displayed in Figure \ref{fig:geo_rerand}, in which we fix $V_{\tau\tau}$ at $1$ without loss of generality, $\theta$ is the angle between $\sqrt{n}(\hat{\tau}_Y - \tau_Y)$ and its projection on the space spanned by $\sqrt{n}\hat{\bm{\tau}}_{\bm{X}}$, and then $R$ is the cosine of $\theta$.

\begin{figure}[htb]
  \centering
    \includegraphics[width=0.6\textwidth]{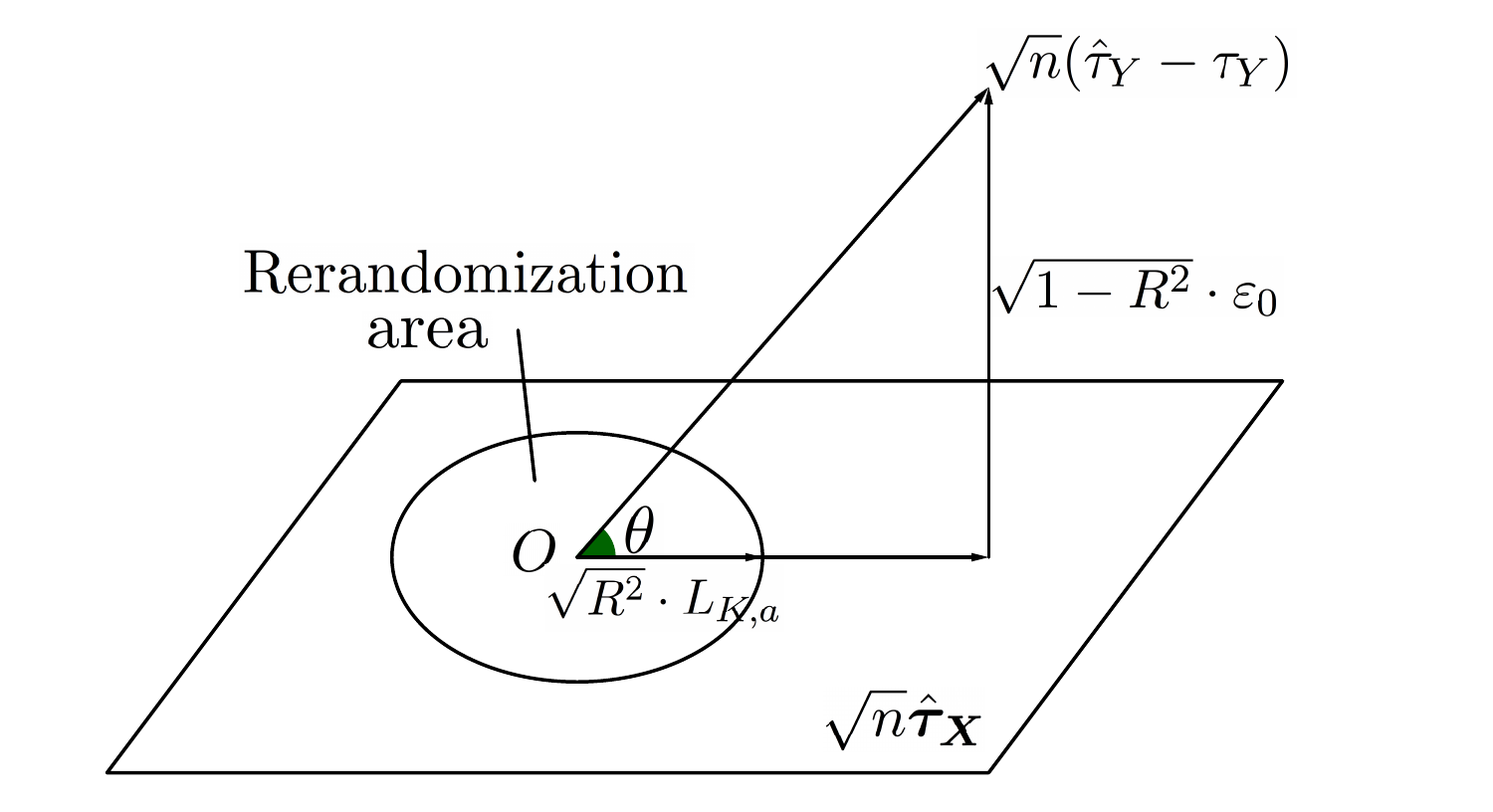}
   \caption{
   Geometry of rerandomization.
   $O$ is the origin, $\theta$ is the angle between $\sqrt{n}(\hat{\tau}_Y-\tau_Y)$ and its projection on $\sqrt{n}\hat{\bm{\tau}}_{\bm{X}}$. The ellipse around $O$ is the acceptance region under ReM. 
   $\sqrt{1-R^2}\cdot \varepsilon_0$ is the component of $\sqrt{n}(\hat{\tau}_Y-\tau_Y)$ orthogonal to the space of $\sqrt{n}\hat{\bm{\tau}}_{\bm{X}}$, and $\sqrt{R^2}\cdot L_{K,a}$ is the projection onto the space of $\sqrt{n}\hat{\bm{\tau}}_{\bm{X}}$ under ReM.
   }\label{fig:geo_rerand}
\end{figure}

\subsection{Representation and simulation of the asymptotic distribution under ReM}

The asymptotic distribution in \eqref{eq:dist_maha_Gaussian_trun_norm} involves a random variable $L_{K,a}$ that does not appear in standard statistical problems. Algebraically, $L_{K,a} \sim D_1 \mid \bm{D}'\bm{D}\leq a$ is the first coordinate of a $K$ dimensional standard Gaussian vector, subject to the constraint that the squared length of the vector does not exceed $a$. This type of truncation of Gaussian distributions is apparently unstudied except for \citet{tallis1963} 
and \citet{morgan2012rerandomization}. 
Because the standard Gaussian vector is spherically symmetric \citep{dempster1969elements, rubin1976multivariate, fang1989symmetric}, it can be written as a product of two independent random components, a $\chi_K$ random variable and a random vector uniformly distributed on the $(K-1)$ dimensional unit sphere. The truncation condition, $\bm{D}'\bm{D}\leq a$, affects only the first component $\chi_K$, leaving the second component unchanged. Basic properties of spherically symmetrical distributions allow us to represent $L_{K,a}$ using some known distributions, which allows for easy simulation of $L_{K,a}.$

Let $\chi^2_{K,a}  \sim \chi^2_K \mid \chi^2_K \leq a$ be a truncated $\chi^2$ random variable, $U_K$ the first coordinate of the uniform random vector over the $(K-1)$ dimensional unit sphere, $S$ a random sign taking $\pm 1$ with probability $1/2$, and $\beta_K \sim \text{Beta}\left(1/2, (K-1)/2\right)$ a Beta random variable degenerating to a point mass at $1$ when $K=1$.

\begin{proposition}\label{prop::curious}
$L_{K,a}$ can be represented as
\begin{eqnarray}\label{eq::curious}
L_{K,a} \sim D_1 \mid \bm{D}'\bm{D}\leq a
\sim \chi_{K,a} U_K
\sim \chi_{K,a} S \sqrt{\beta_K},   
\end{eqnarray}
where 
$(\chi_{K,a},  U_K)$ are mutually independent, and
$(\chi_{K,a} , S,   \beta_K )$ are mutually independent. 
$L_{K,a}$ is symmetric and unimodal around zero, with variance
$\Var(L_{K,a})=v_{K,a}=P(\chi^2_{K+2}\leq a) / P(\chi^2_K \leq a)<1$.
\end{proposition}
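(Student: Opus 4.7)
The plan is to exploit the spherical symmetry of $\bm{D} \sim \mathcal{N}(\bm{0}, \bm{I}_K)$. First I would write $\bm{D} = R \cdot \bm{W}$ where $R = \|\bm{D}\|$ and $\bm{W} = \bm{D}/\|\bm{D}\|$, using the classical fact that $R^2 \sim \chi^2_K$, $\bm{W}$ is uniformly distributed on the unit sphere $S^{K-1}$, and $R$ and $\bm{W}$ are independent. Since the conditioning event $\bm{D}'\bm{D} \le a$ depends only on $R$, it leaves $\bm{W}$ unaffected, so conditionally $R^2 \sim \chi^2_{K,a}$ while $\bm{W}$ remains uniform on $S^{K-1}$ and independent of $R$. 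Reading off the first coordinate gives $L_{K,a} \sim \chi_{K,a} U_K$ with independent factors. For the last representation, I would use the standard identity that the first coordinate $U_K$ of a uniform vector on $S^{K-1}$ satisfies $U_K^2 \sim \text{Beta}(1/2, (K-1)/2)$ (for $K=1$ this is a point mass at $1$) and, by symmetry of the sphere under $D_1 \mapsto -D_1$, $\text{sign}(U_K)$ is independent of $|U_K|$ and equally likely $\pm 1$; thus $U_K \sim S\sqrt{\beta_K}$.

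For the symmetry and unimodality, symmetry is immediate from the invariance of $\bm{D}'\bm{D}$ under $D_1 \mapsto -D_1$. For unimodality I would compute the marginal density of $L_{K,a}$ directly by integrating out $D_2,\ldots,D_K$: for $|x|\le \sqrt{a}$,
\begin{align*}
f_{L_{K,a}}(x) \;\propto\; \phi(x)\, P\bigl(\chi^2_{K-1} \le a - x^2\bigr),
\end{align*}
which is a product of two factors each non-increasing in $|x|$, hence unimodal around zero.

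For the variance, by symmetry $E[L_{K,a}] = 0$, so by the representation $L_{K,a} \sim \chi_{K,a} U_K$ with independent factors,
\begin{align*}
\Var(L_{K,a}) \;=\; E[\chi^2_{K,a}]\cdot E[U_K^2] \;=\; \frac{1}{K}\, E\bigl[\chi^2_K \mid \chi^2_K \le a\bigr],
\end{align*}
where $E[U_K^2] = 1/K$ follows from $\sum_{k=1}^K U_k^2 = 1$ and symmetry. Then I would apply the chi-square identity $t\, f_{\chi^2_K}(t) = K\, f_{\chi^2_{K+2}}(t)$, which is a direct calculation from the chi-square densities using $\Gamma((K+2)/2) = (K/2)\,\Gamma(K/2)$. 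Integrating from $0$ to $a$ and dividing by $P(\chi^2_K \le a)$ gives $E[\chi^2_K \mid \chi^2_K \le a] = K \cdot P(\chi^2_{K+2} \le a)/P(\chi^2_K \le a)$, which cancels the $1/K$ and yields $v_{K,a}$.

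Finally, for $v_{K,a} < 1$, I would note that $\chi^2_{K+2}$ is stochastically larger than $\chi^2_K$ (for instance, by the coupling $\chi^2_{K+2} = \chi^2_K + \chi^2_2$ with independent summands), so $P(\chi^2_{K+2}\le a) < P(\chi^2_K\le a)$ for every finite $a > 0$. The only real subtlety is the unimodality step; everything else is spherical-symmetry bookkeeping and a one-line chi-square identity, so I expect the density monotonicity argument to be the step deserving the most care.
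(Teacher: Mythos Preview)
Your proposal is correct and follows essentially the same approach as the paper: the polar decomposition $\bm{D}=R\,\bm{W}$ with $R\perp\bm{W}$ for the representation, and the explicit density $f_{L_{K,a}}(x)\propto \phi(x)\,P(\chi^2_{K-1}\le a-x^2)$ for unimodality. The one substantive difference is that the paper simply cites \citet[Theorem~3.1]{morgan2012rerandomization} for the variance formula, whereas you supply a self-contained derivation via $E[U_K^2]=1/K$ and the chi-square identity $t\,f_{\chi^2_K}(t)=K\,f_{\chi^2_{K+2}}(t)$, and you also make the $v_{K,a}<1$ inequality explicit through the stochastic-dominance coupling $\chi^2_{K+2}=\chi^2_K+\chi^2_2$; both additions are correct and make your argument more complete than the paper's.
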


Because both $\varepsilon_0$ and $L_{K,a}$ are symmetric and both are unimodal at zero, their linear combination is also symmetric and unimodal at zero according to \citet{wintner1936class}'s Theorem. The same is true for the asymptotic distribution of $\sqrt{n}(\hat{\tau}_Y - \tau_Y)$ in (\ref{eq:dist_maha_Gaussian_trun_norm}).
The representation in \eqref{eq::curious} allows for easy simulation of $L_{K,a}$, as well as the asymptotic distribution of $\sqrt{n}(\hat{\tau}_Y - \tau_Y)$ in \eqref{eq:dist_maha_Gaussian_trun_norm}, which is relevant for statistical inference discussed later.

Without loss of generality, we fix $V_{\tau\tau}$ at $1$, and consider the distribution of
\begin{align}\label{eq:std_dist}
Q = 
\sqrt{1-R^2}\cdot \varepsilon_0 + \sqrt{R^2}\cdot L_{K,a},
\end{align}
which depends on $R^2$, the dimension of the covariates $K$, and the asymptotic acceptance probability of rerandomization $p_a =  P(\chi^2_K \leq a)$. 
We simulate values of $Q$ using independent and identically distributed (i.i.d) draws from (\ref{eq:std_dist}).
First, we fix $K=10$ and $p_a=0.001$. Figure  \ref{fig:std_dist_R2_K10_pa0001} shows the probability densities of $Q$ with different values of $R^2$, which approaches to that of $L_{K,a}$ as $R^2$ increases. Because $\varepsilon_0$ is more diffusely distributed than the truncated variable $L_{K,a}$, the probability density of $Q$ will concentrate more around $0$ with increasing $R^2$, as shown in Figure  \ref{fig:std_dist_R2_K10_pa0001}. 

Second,
we fix $K=3$ and $R^2=0.6$. Figure \ref{fig:std_dist_p_a} shows the probability densities of $Q$ with different values of asymptotic  acceptance probability $p_a$; the CRE corresponds to $p_a=1$. With smaller $p_a$, the distribution of $Q$ becomes more concentrated around $0$. Asymptotically, using smaller acceptance probabilities in ReM gives us more precise estimators for the average causal effect. 
However, when $R^2<1,$ which is usually the case in practice, the gain of ReM by decreasing the threshold $a$ becomes less as $a$ becomes smaller. For example, the density of $Q$ with $p_a = 0.0001$ is almost the same as the one with $p_a=0.001$ in Figure \ref{fig:std_dist_p_a}, and the percentage reduction in variance of $Q$ achieved by decreasing $p_a$ from $0.001$ to $0.0001$ is only $5.7\%.$

\begin{figure}
\centering
\begin{subfigure}{.5\textwidth}
  \centering
  \includegraphics[width=0.8\linewidth]{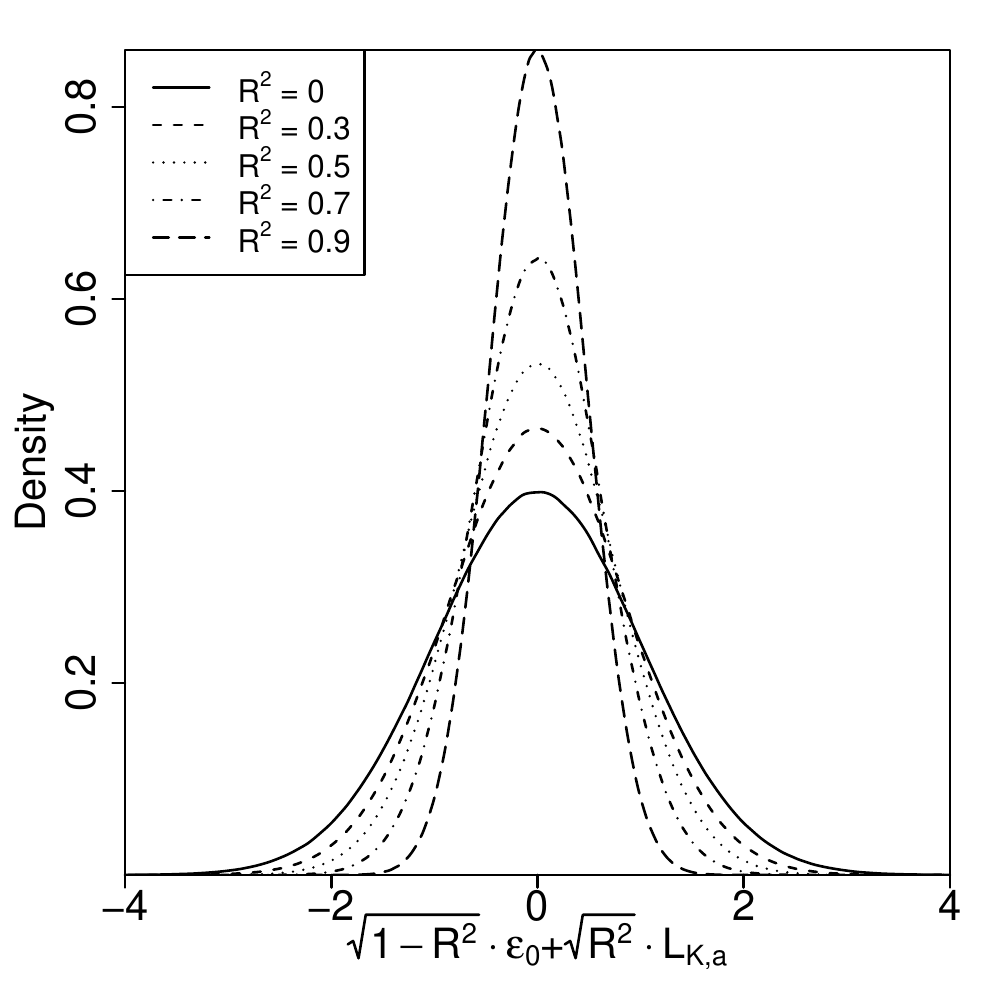}
      \caption{$K=10$ and $p_a=0.001$} \label{fig:std_dist_R2_K10_pa0001}
\end{subfigure}%
\begin{subfigure}{.5\textwidth}
  \centering
  \includegraphics[width=0.8\linewidth]{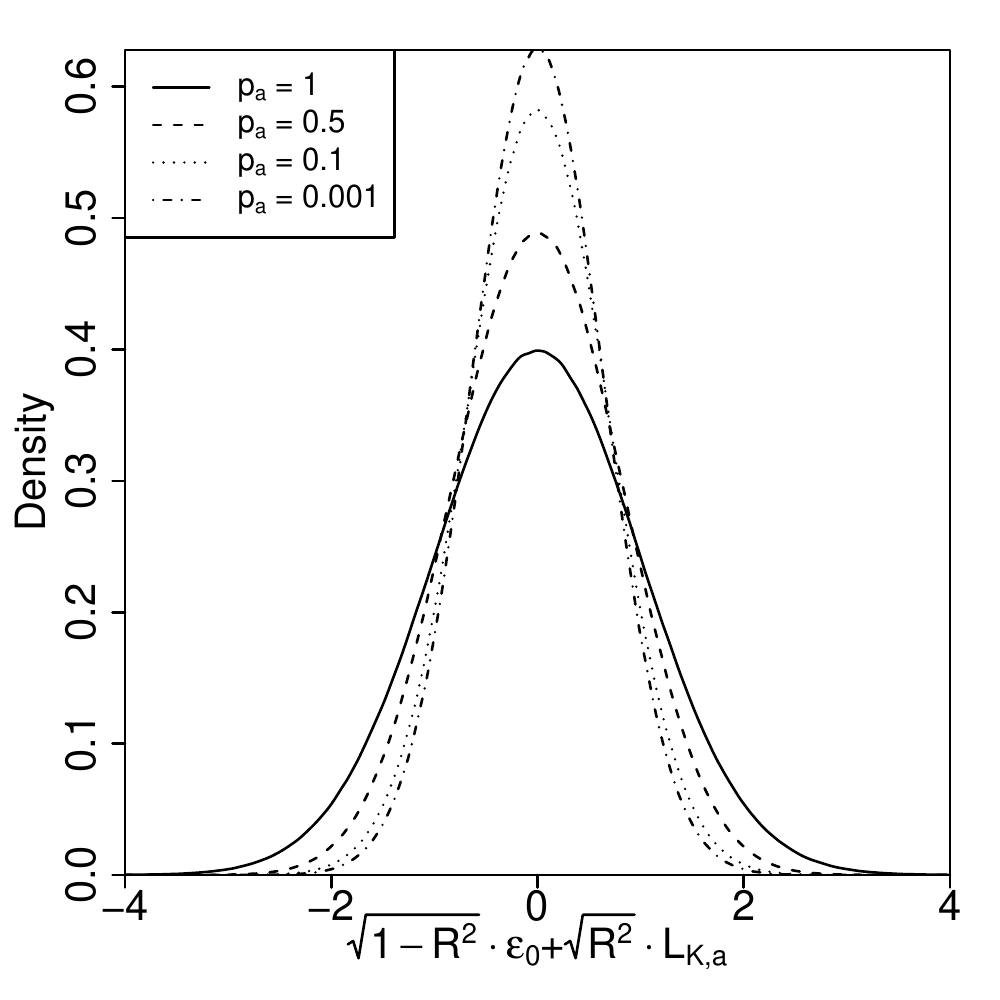}
      \caption{\centering $R^2=0.6$ and $K=3$}
    \label{fig:std_dist_p_a}
\end{subfigure}
\caption{Asymptotic distribution under ReM with $V_{\tau\tau}$ fixed at $1$}
\label{fig:std_dist_maha}
\end{figure}

\subsection{Asymptotic unbiasedness, sampling variance and quantile ranges}

Theorem \ref{thm:asymp_dist_Y_maha} characterizes the asymptotic behavior of $\hat{\tau}_Y$ over 
ReM,
which immediately implies the following conclusions as extensions of \citet{morgan2012rerandomization}.

First, the asymptotic distribution in (\ref{eq:dist_maha_Gaussian_trun_norm}) is symmetric around $0$, implying that $\hat{\tau}_Y$ is asymptotically unbiased for ${\tau}_Y$. 
Let $\mathbb{E}_{\text{a}}(\cdot)$ and $\Var_{\text{a}}(\cdot)$ denote the expectation and covariance matrix (or variance for scalar cases) of the asymptotic sampling distribution of a sequence of random vectors.
\begin{corollary}\label{thm:unbiased_maha}
Under ReM and Condition \ref{cond:fp}, 
$
\mathbb{E}_{\text{a}}\left\{ \sqrt{n}(\hat{\tau}_Y -\tau_Y) 
\mid \sqrt{n}\hat{\bm{\tau}}_{\bm{X}} \in \mathcal{M}
\right\}=0.
$
\end{corollary}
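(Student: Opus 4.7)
The plan is to derive the corollary as an essentially immediate consequence of Theorem \ref{thm:asymp_dist_Y_maha} combined with the symmetry of $L_{K,a}$ supplied by Proposition \ref{prop::curious}. Since $\mathbb{E}_{\text{a}}(\cdot)$ is defined as the expectation of the limiting distribution, rather than as the limit of the finite-$n$ expectations, it is enough to compute the mean of the random variable on the right-hand side of \eqref{eq:dist_maha_Gaussian_trun_norm}; no uniform integrability argument is needed.

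First I would apply Theorem \ref{thm:asymp_dist_Y_maha} and use linearity of expectation to reduce the problem to
$$
\mathbb{E}_{\text{a}}\{\sqrt{n}(\hat{\tau}_Y-\tau_Y)\mid \sqrt{n}\hat{\bm{\tau}}_{\bm{X}}\in\mathcal{M}\}
=\sqrt{V_{\tau\tau}}\Bigl(\sqrt{1-R^2}\,\mathbb{E}[\varepsilon_0]+\sqrt{R^2}\,\mathbb{E}[L_{K,a}]\Bigr).
$$
The first term vanishes because $\varepsilon_0\sim\mathcal{N}(0,1)$. For the second term I would invoke the representation $L_{K,a}\sim \chi_{K,a}\, S\,\sqrt{\beta_K}$ from Proposition \ref{prop::curious}, where $S$ is an independent random sign taking $\pm 1$ with equal probability. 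Independence of $S$ from the nonnegative factor $\chi_{K,a}\sqrt{\beta_K}$ yields $L_{K,a}\stackrel{d}{=}-L_{K,a}$, so $\mathbb{E}[L_{K,a}]=0$; existence of the mean is automatic from the deterministic bound $|L_{K,a}|\le\sqrt{a}$, itself a consequence of $\chi^2_{K,a}\le a$ and $\beta_K\in[0,1]$.

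There is really no hard step in this argument: the corollary is just a restatement of the symmetry of the asymptotic law around zero, and the entire work has already been done in establishing Theorem \ref{thm:asymp_dist_Y_maha} and the representation in Proposition \ref{prop::curious}. The only subtlety worth flagging is that the conclusion is about the mean of the limiting distribution rather than $\lim_{n\to\infty}\mathbb{E}[\sqrt{n}(\hat{\tau}_Y-\tau_Y)\mid \sqrt{n}\hat{\bm{\tau}}_{\bm{X}}\in\mathcal{M}]$; this matches the paper's convention for $\mathbb{E}_{\text{a}}$, so no further justification is required.
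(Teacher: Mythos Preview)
Your proposal is correct and matches the paper's own reasoning: the main text states that Corollary~\ref{thm:unbiased_maha} follows directly from the symmetry around zero of the asymptotic distribution in Theorem~\ref{thm:asymp_dist_Y_maha}, which is precisely what you argue via $\mathbb{E}[\varepsilon_0]=0$ and the symmetry of $L_{K,a}$ from Proposition~\ref{prop::curious}. The Supplementary Material (Corollary~A4) proves a more general version for ReG by invoking the joint symmetry $(A_\infty,\bm{B}_\infty')\sim(-A_\infty,-\bm{B}_\infty')$ together with the symmetry of the acceptance region, but for ReM specifically your route through the already-established representation is the natural one and is what the paper itself indicates.
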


\citet{morgan2012rerandomization} gave a counter-example showing that, in an experiment with unequal treatment group sizes, $\hat{\tau}_Y$ can be biased for $\tau_Y$ over ReM. As conjectured by \citet{morgan2015rerandomization}, our result suggests that the bias is often small with large samples.
Corollary \ref{thm:unbiased_maha} extends \citet[][Theorem 2.1]{morgan2012rerandomization} and ensures the asymptotic unbiasedness of $\hat{\tau}_Y$ for experiments with any ratio of group sizes. 
Corollary \ref{thm:unbiased_maha} also implies that any covariate asymptotically has the same means under treatment and control.

Furthermore, from Proposition \ref{prop::curious} and Theorem \ref{thm:asymp_dist_Y_maha}, we can calculate the asymptotic sampling variances of  $\hat{\bm{\tau}}_{\bm{X}}$ and $\hat{\tau}_Y$, and the percentage reductions in asymptotic sampling variances (PRIASV) under ReM compared to CRE. Recalling that $v_{K,a} = P(\chi^2_{K+2}\leq a) / P(\chi^2_K \leq a)$, we summarize the results below.

\begin{corollary}
\label{coro::ReM-variance}
Under ReM and Condition \ref{cond:fp}, the asymptotic sampling covariance of $\hat{\bm{\tau}}_{\bm{X}}$ is 
$$
\Var_{\text{a}}\left(
\sqrt{n}\hat{\bm{\tau}}_{\bm{X}}
\mid \sqrt{n}\hat{\bm{\tau}}_{\bm{X}} \in \mathcal{M}
\right)
=v_{K,a} \bm{V}_{\bm{xx}},
$$
and the PRIASV of any component of $\hat{\bm{\tau}}_{\bm{X}}$ is
$
1-v_{K,a}.
$
The asymptotic sampling variance of $\hat{\tau}_Y$ is
\begin{align}\label{eq:symp_var_maha}
\Var_{\text{a}}\left\{
\sqrt{n}(\hat{\tau}_Y-\tau_Y)
\mid \sqrt{n}\hat{\bm{\tau}}_{\bm{X}} \in \mathcal{M}
\right\} 
= V_{\tau\tau}\left\{ 1- (1-v_{K,a})R^2
\right\} , 
\end{align}
and the PRIASV of $\hat{\tau}_Y$ is
$
(1-v_{K,a})R^2.
$
\end{corollary}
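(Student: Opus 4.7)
Because $\Var_{\text{a}}$ is defined in the paper as the variance of the limiting sampling distribution, the plan is to compute second moments directly from the limits already supplied by Proposition~\ref{thm:asymp_dist_maha} and Theorem~\ref{thm:asymp_dist_Y_maha}, using $\Var(L_{K,a})=v_{K,a}$ from Proposition~\ref{prop::curious}. The covariate part needs a spherical-symmetry argument, and the outcome part is then just additivity of variance across independent components.

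For $\sqrt{n}\hat{\bm{\tau}}_{\bm{X}}$, Proposition~\ref{thm:asymp_dist_maha} gives the limiting law $\bm{B}\mid \bm{B}\in\mathcal{M}$ with $\bm{B}\sim\mathcal{N}(\bm{0},\bm{V}_{\bm{xx}})$. I would write $\bm{B}=\bm{V}_{\bm{xx}}^{1/2}\bm{D}$ with $\bm{D}\sim\mathcal{N}(\bm{0},\bm{I}_K)$, which turns the constraint $\bm{B}'\bm{V}_{\bm{xx}}^{-1}\bm{B}\leq a$ into $\bm{D}'\bm{D}\leq a$. The conditional law of $\bm{D}$ given $\bm{D}'\bm{D}\leq a$ is invariant under orthogonal transformations (both the standard Gaussian density and the Euclidean ball are rotationally symmetric), so its covariance matrix must be a scalar multiple of $\bm{I}_K$. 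That scalar is $\Var(D_1\mid \bm{D}'\bm{D}\leq a)=\Var(L_{K,a})=v_{K,a}$ by Proposition~\ref{prop::curious}. Conjugating back by $\bm{V}_{\bm{xx}}^{1/2}$ delivers $\Var_{\text{a}}(\sqrt{n}\hat{\bm{\tau}}_{\bm{X}}\mid \sqrt{n}\hat{\bm{\tau}}_{\bm{X}}\in\mathcal{M})=v_{K,a}\bm{V}_{\bm{xx}}$, and since the CRE asymptotic covariance is $\bm{V}_{\bm{xx}}$, comparing diagonal entries yields the PRIASV $1-v_{K,a}$ for every component.

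For $\hat{\tau}_Y$, Theorem~\ref{thm:asymp_dist_Y_maha} expresses the limit as $\sqrt{V_{\tau\tau}}(\sqrt{1-R^2}\,\varepsilon_0+\sqrt{R^2}\,L_{K,a})$ with $\varepsilon_0$ independent of $L_{K,a}$, $\Var(\varepsilon_0)=1$, and $\Var(L_{K,a})=v_{K,a}$. Independence makes the variance additive, so
\begin{equation*}
\Var_{\text{a}}\left\{\sqrt{n}(\hat{\tau}_Y-\tau_Y)\mid \sqrt{n}\hat{\bm{\tau}}_{\bm{X}}\in\mathcal{M}\right\}
= V_{\tau\tau}\bigl\{(1-R^2)+v_{K,a}R^2\bigr\}
= V_{\tau\tau}\bigl\{1-(1-v_{K,a})R^2\bigr\}.
\end{equation*}
Dividing the reduction from the CRE asymptotic variance $V_{\tau\tau}$ by $V_{\tau\tau}$ gives the PRIASV $(1-v_{K,a})R^2$.

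The only step that is not purely mechanical is the rotational-invariance argument that forces $\Cov(\bm{D}\mid \bm{D}'\bm{D}\leq a)$ to be a scalar multiple of $\bm{I}_K$, so that the single scalar $v_{K,a}$ determines the entire conditional covariance. Once this reduction is in place, all four claims follow by substitution, and no fresh probabilistic estimate beyond those already invoked in Theorem~\ref{thm:asymp_dist_Y_maha} and Proposition~\ref{prop::curious} is required.
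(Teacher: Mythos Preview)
Your proof is correct and, for the variance of $\hat{\tau}_Y$, proceeds exactly as the paper does: apply Theorem~\ref{thm:asymp_dist_Y_maha}, use independence of $\varepsilon_0$ and $L_{K,a}$, and plug in $\Var(L_{K,a})=v_{K,a}$ from Proposition~\ref{prop::curious}.

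For the covariance of $\sqrt{n}\hat{\bm{\tau}}_{\bm{X}}$ the two arguments diverge slightly. The paper simply invokes \citet[][Theorem 3.1]{morgan2012rerandomization} to assert $\Var(\bm{B}_\infty\mid \bm{B}_\infty\in\mathcal{M}_\infty)=v_{K,a}\bm{V}_{\bm{xx},\infty}$ as a known result for Gaussian covariates. You instead give a self-contained derivation: standardize via $\bm{B}=\bm{V}_{\bm{xx}}^{1/2}\bm{D}$, observe that the conditional law of $\bm{D}$ given $\bm{D}'\bm{D}\leq a$ is rotationally invariant and hence has covariance $c\,\bm{I}_K$ for some scalar $c$, and identify $c=\Var(L_{K,a})=v_{K,a}$ from Proposition~\ref{prop::curious}. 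Your route is more transparent and avoids the external citation, at the cost of spelling out the spherical-symmetry step; the paper's route is shorter because that step has already been done elsewhere. Either way the content is the same, and the remaining PRIASV claims follow by comparing with the CRE variances $\bm{V}_{\bm{xx}}$ and $V_{\tau\tau}$ exactly as you say.
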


Note that the asymptotic sampling covariance and sampling variance of $\hat{\bm{\tau}}_{\bm{X}}$ and $\hat{\tau}_Y$ are actually the limits of $v_{K,a} \bm{V}_{\bm{xx}}$ and $V_{\tau\tau}\{ 1- (1-v_{K,a})R^2
\}$ in the sequence of finite populations. However, for descriptive convenience, we omit these limit signs when discussing the expectation and covariance of asymptotic sampling distributions. 
When $a$ is close to $0$, or equivalently the asymptotic acceptance probability is small, the asymptotic sampling variance on the right hand side of (5) reduces to $V_{\tau\tau} (1-R^2)$, which is identical to the asymptotic sampling variance of the regression adjusted estimator under CRE discussed in \citet{lin2013} as an extension of \citet{fisher1925statistical,fisher:1935}. Therefore, rerandomization does covariate adjustment in the design stage, and regression does covariate adjustment in the analysis stage. \citet{cox:2009} and \citet{morgan2012rerandomization} discussed related issues.

When the causal effect is additive, $R^2$ is equal to the finite population squared multiple correlation between $\bm{X}$ and $Y(0)$.
Therefore, Corollary \ref{coro::ReM-variance} is an asymptotic version of Theorem 3.2 in \citet{morgan2012rerandomization}.

Under ReM, in addition to the sampling variance reduction result concerning $\hat{\tau}_Y$ in Corollary \ref{coro::ReM-variance}, we consider the reduction in the length of the $(1-\alpha)$ quantile range of $\hat{\tau}_Y$ compared to that under CRE. We choose the length of the $(1-\alpha)$ quantile range, because of its connection to constructing confidence intervals as discussed shortly.

Let $z_\xi$ be the $\xi$th quantile of a standard Gaussian distribution. Let $\nu_{\xi}(R^2, p_a,K)$ be the  $\xi$th quantile of the distribution of $Q$ in (\ref{eq:std_dist}).
Note that $\nu_{\xi}(0,p_a,K)=z_{\xi}$. Because $p_a$ and $K$ are usually known by design, we write $\nu_{\xi}(R^2, p_a,K)$ as $\nu_{\xi}(R^2)$ for notational simplicity. 
Under ReM,  the $(1-\alpha)$ quantile range of the asymptotic distribution of $\sqrt{n}({\hat{\tau}_Y-\tau_Y})$ is  
\begin{align}\label{eq:asym_qr_maha}
\QR_{\alpha}(V_{\tau\tau}, R^2) = \left[   \nu_{\alpha/2}(R^2)\sqrt{V_{\tau\tau}},\ \   \nu_{1-\alpha/2}(R^2)\sqrt{V_{\tau\tau}}
\right],
\end{align}
and the corresponding quantile range under CRE is 
\begin{align}\label{eq:asym_qr_cre}
\QR_{\alpha}(V_{\tau\tau}, 0) = \left[z_{\alpha/2}\sqrt{V_{\tau\tau}}, \ \  z_{1-\alpha/2}\sqrt{V_{\tau\tau}}
\right].
\end{align}

\begin{theorem}\label{thm:shorter_ci_maha}
Under Condition \ref{cond:fp},
the length of the $(1-\alpha)$ quantile range of the asymptotic sampling distribution of
$\sqrt{n}(\hat{\tau}_Y-\tau_Y)$ under ReM is less than or equal to that under CRE, with the difference  nondecreasing in $R^2$.  
\end{theorem}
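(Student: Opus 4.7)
The strategy is to reduce both assertions --- the length comparison and the monotonicity of the gap --- to a single statement, namely that $\nu_{1-\alpha/2}(R^{2})$ is nonincreasing in $R^{2}$, and then prove that statement via the two-dimensional Gaussian correlation inequality applied after a convenient conditioning.

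First, by Wintner's theorem (invoked just after Proposition~\ref{prop::curious}), $Q=\sqrt{1-R^{2}}\,\varepsilon_{0}+\sqrt{R^{2}}\,L_{K,a}$ is symmetric about $0$, so $\nu_{\alpha/2}(R^{2})=-\nu_{1-\alpha/2}(R^{2})$. Thus the ReM quantile range in \eqref{eq:asym_qr_maha} has length $2\nu_{1-\alpha/2}(R^{2})\sqrt{V_{\tau\tau}}$, while the CRE range in \eqref{eq:asym_qr_cre} has length $2z_{1-\alpha/2}\sqrt{V_{\tau\tau}}=2\nu_{1-\alpha/2}(0)\sqrt{V_{\tau\tau}}$. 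Both conclusions then follow once $\nu_{1-\alpha/2}(R^{2})$ is shown to be nonincreasing in $R^{2}\in[0,1]$; equivalently, for every fixed $t\ge 0$, the map $R^{2}\mapsto P(|Q|\le t)$ is nondecreasing.

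Next, I would lift the problem to an unconditional Gaussian one. Let $\bm D=(D_{1},\ldots,D_{K})'\sim\mathcal N(\bm 0,\bm I_{K})$ and $\varepsilon\sim\mathcal N(0,1)$ be independent, so that by Proposition~\ref{prop::curious}, $L_{K,a}\stackrel{d}{=}D_{1}\mid\bm D'\bm D\le a$. Writing $s=R^{2}$ and $\xi_{s}=\sqrt{s}\,D_{1}+\sqrt{1-s}\,\varepsilon$, independence of $\varepsilon$ from $\bm D$ yields $Q\stackrel{d}{=}\xi_{s}\mid\bm D'\bm D\le a$, whence
\[
P(|Q|\le t)=\frac{P(|\xi_{s}|\le t,\ \bm D'\bm D\le a)}{P(\bm D'\bm D\le a)}.
\]
The denominator is free of $s$, so it suffices to prove the numerator is nondecreasing in $s\in[0,1]$. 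Because $\bm D_{-1}=(D_{2},\ldots,D_{K})'$ is independent of the pair $(\xi_{s},D_{1})$, I condition on $\bm D_{-1}$ and rewrite the numerator as $E_{\bm D_{-1}}\bigl[P(|\xi_{s}|\le t,\ |D_{1}|\le\rho(\bm D_{-1}))\bigr]$, with $\rho(\bm D_{-1})=\sqrt{(a-\|\bm D_{-1}\|^{2})_{+}}$.

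For each realization of $\bm D_{-1}$, $(\xi_{s},D_{1})$ is bivariate Gaussian with standard marginals and correlation $r=\sqrt{s}\in[0,1]$. Differentiating the bivariate normal cdf in $r$ yields
\[
\partial_{r}P(|\xi_{s}|\le t,\ |D_{1}|\le\rho)=\frac{1}{\pi\sqrt{1-r^{2}}}\left[\exp\!\left(-\frac{t^{2}-2rt\rho+\rho^{2}}{2(1-r^{2})}\right)-\exp\!\left(-\frac{t^{2}+2rt\rho+\rho^{2}}{2(1-r^{2})}\right)\right]\ge 0
\]
for $r\ge 0$, which is exactly the two-dimensional Gaussian correlation inequality for origin-centered rectangles. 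Monotonicity in $s$ is preserved by $E_{\bm D_{-1}}$, so $P(|Q|\le t)$ is nondecreasing in $s$, which completes the reduction. The main conceptual step is recognizing that conditioning on the nuisance coordinates $\bm D_{-1}$ converts the rerandomization monotonicity question into a rectangle probability for a bivariate Gaussian whose correlation $\sqrt{s}$ is monotone in $R^{2}$; the remaining pieces --- symmetry via Wintner, the decoupling of $\varepsilon$ from $\bm D$, and the passage from cdf monotonicity to quantile monotonicity --- are routine.
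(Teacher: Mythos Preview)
Your proposal is correct, and structurally it mirrors the paper's own argument closely: both reduce Theorem~\ref{thm:shorter_ci_maha} to showing that $\nu_{1-\alpha/2}(R^{2})$ is nonincreasing in $R^{2}$, and both establish this by conditioning on the nuisance coordinates $\bm D_{-1}=(D_{2},\ldots,D_{K})$ to turn the problem into a one-parameter monotonicity question in the remaining two-dimensional Gaussian pair. The difference lies only in how that two-dimensional step is executed. The paper (Lemmas~\ref{lemma:order_rho_mono} and~\ref{lemma:order_rho_K}) differentiates the conditional tail $P(\sqrt{1-\rho^{2}}\,\varepsilon_{0}+\rho\,\eta\ge c\mid\eta^{2}\le a)$ directly in $\rho$, carrying out the change of variables and sign check by hand. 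You instead recast the same quantity as the rectangle probability $P(|\xi_{s}|\le t,\ |D_{1}|\le\rho)$ for a bivariate Gaussian with correlation $\sqrt{s}$ and invoke Plackett's identity / the two-dimensional Gaussian correlation inequality. These two computations are equivalent --- both amount to differentiating in the correlation parameter and checking the sign --- so your route is essentially the paper's, with the inner step packaged as a recognizable classical inequality rather than a bespoke calculus computation. That repackaging buys a little transparency and reusability; the paper's hands-on version avoids invoking an external result.
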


\subsection{Sampling variance estimation and confidence intervals}\label{sec:var_ci_maha}
Asymptotic sampling variance and quantile range for $\hat{\tau}_Y$ depend on $V_{\tau\tau}$ and $R^2$, which are determined by the finite population covariances among potential outcomes and covariates. To obtain a sampling variance estimator and to construct an asymptotic confidence interval for $\tau_Y$, we need to estimate these finite population variances and covariances.
Let $s_{Y(z)}^2$, $s_{Y(z)\mid \bm{X}}^2$ and $\bm{s}_{Y(z),\bm{X}}$ be the sample variance of $Y$, sample variance of linear projection of $Y$ on $\bm{X}$, and sample covariance of $Y$ and $\bm{X}$ in treatment arm $z.$ We show in the Supplementary Material that under ReM they are asymptotically unbiased for their population analogues $S_{Y(z)}^2, S_{Y(z)\mid \bm{X}}^2$ and $S_{Y(z),\bm{X}}$.
Therefore, we can estimate $V_{\tau\tau}$ by \citep{ding2016decomposing}  
$$
\hat{V}_{\tau\tau}=r_1^{-1}s_{Y(1)}^2 + r_0^{-1}s_{Y(0)}^2 - 
(\bm{s}_{Y(1),\bm{X}}-\bm{s}_{Y(0),\bm{X}}) (\bm{S}_{\bm{X}}^2)^{-1}(\bm{s}_{\bm{X},Y(1)}-\bm{s}_{\bm{X},Y(0)}).
$$ 
We then estimate $R^2$ by 
\begin{align}
\label{eq:R2_hat_maha}
\hat{R}^2 = & \hat{V}_{\tau\tau}^{-1}
\left\{r_1^{-1}s_{Y(1)\mid \bm{X}}^2+r_0^{-1}s_{Y(0)\mid \bm{X}}^2
- \left(\bm{s}_{Y(1),\bm{X}}-\bm{s}_{Y(0),\bm{X}}\right) \left(\bm{S}_{\bm{X}}^2\right)^{-1}\left(\bm{s}_{\bm{X},Y(1)}-\bm{s}_{\bm{X},Y(0)}\right)
\right\}.
\end{align}
We set $\hat{R}^2$ to be $0$ if the estimator in (\ref{eq:R2_hat_maha}) is negative.

According to (\ref{eq:symp_var_maha}), we can estimate the sampling
variance of $\hat{\tau}_Y$ by $\hat{V}_{\tau\tau}  \{ 1- (1-v_{K,a})\hat{R}^2  \}/n,$
and according to (\ref{eq:asym_qr_maha}), we can construct a large sample $(1-\alpha)$ confidence interval for ${\tau}_Y$ using
$
\hat{\tau}_Y -  
\QR_{\alpha}(\hat{V}_{\tau\tau}, \hat{R}^2) / \sqrt{n}. 
$
The sampling variance estimator is smaller than \citet{Neyman:1923}'s sampling variance estimator for CRE, and the confidence interval is shorter than \citet{Neyman:1923}'s confidence interval for CRE. 
Not surprisingly, unless the residual from the linear projection of individual causal effect on the covariates is constant, 
the above sampling variance estimator and confidence interval are both asymptotically conservative,
in the sense that the probability limit of variance estimator is larger than or equal to the actual sampling variance, and the limit of coverage probability of confidence interval is larger than or equal to $(1-\alpha)$. 
Therefore, if we conduct ReM in the design stage but analyze data as in CRE, the consequential sampling variance estimator and confidence intervals will be overly conservative. These results are all intuitive, and we present the algebraic details for the proofs of these results in the Supplementary Material, where the unimodality of $L_{K,a}$ plays an important role in the conservativeness of confidence intervals. Interestingly, as shown in the Supplementary Material, we do not need more moment conditions beyond Condition \ref{cond:fp} to ensure the asymptotic properties of the variance estimator and the confidence intervals.

We also conduct simulations in the Supplementary Material with non-additive and additive causal effects, where the results agree with our theory for ReM.

\section{Rerandomization with tiers of covariates}
\label{sec::ReT}

\subsection{Mahalanobis distance with tiers of covariates criterion}

When covariates are thought to have different levels of importance for the outcomes, \citet{morgan2015rerandomization} proposed rerandomization using the Mahalanobis distance with differing criteria for different tiers of covariates. We partition the covariates into $T$ tiers indexed by $t=1,\ldots,T$ with decreasing importance, with $k_t$ covariates in tier $t$. Let $\bm{X}_i=(\bm{X}_i[1], \ldots, \bm{X}_i[T])$,  where $\bm{X}_i[t]$ denotes the covariates in tier $t$. Define $\bm{X}_i[\overline{t}] =  (\bm{X}_i[1], \ldots, \bm{X}_i[t])$, the covariates in the first $t$ tiers. Following the notation in \citet{morgan2015rerandomization}, we let $\bm{S}_{\bm{X}[\overline{t-1}]}^2$ be the finite population covariance matrix of the covariates in first $t-1$ tiers, and $\bm{S}_{\bm{X}[t], \bm{X}[\overline{t-1}]}$ be the finite population covariance matrix between $\bm{X}[t]$ and $\bm{X}[\overline{t-1}]$.
We first apply a block-wise Gram--Schmidt orthogonalization to the covariates to create the orthogonalized covariates:
\begin{eqnarray*}
\bm{E}_i[1] &=& \bm{X}_i[1] ,\\
\bm{E}_i[t]  &=& \bm{X}_i[t] - \bm{S}_{\bm{X}[t], \bm{X}[\overline{t-1}]} \left( \bm{S}_{\bm{X}[\overline{t-1}]}^2 \right)^{-1}\bm{X}_i[\overline{t-1}], \quad ( 2\leq t\leq \Tau ) 
\end{eqnarray*}
where $\bm{E}_i[t]$ is the residual of the projection of the covariates $\bm{X}_i[t]$ in tier $t$ onto the space spanned by the covariates in previous tiers;
 $\bm{E}_i=(\bm{E}_i[1],\ldots, \bm{E}_i[T])$. 
Let $\hat{\bm{\tau}}_{\bm{E}[t]}$ be the difference-in-means vector of $\bm{E}_i[t]$ between treatment and control groups, and $\bm{S}_{\bm{E}[t]}^2$ the finite population covariance matrix of $\bm{E}_i[t]$.
The Mahalanobis distance in tier $t$ is  
$$
M_t =  \frac{n_1n_0}{n}  \hat{\bm{\tau}}_{\bm{E}[t]}' \left( \bm{S}_{\bm{E}[t]}^2 \right)^{-1}\hat{\bm{\tau}}_{\bm{E}[t]},
$$
and rerandomization using the Mahalanobis distance with tiers of covariates (ReMT) accepts those treatment assignments with $M_t\leq a_t$, where $a_t$'s are predetermined constants $(1\leq t\leq \Tau)$. We can show that the criterion depends only on $\sqrt{n}\hat{\bm{\tau}}_{\bm{X}}$ and $\bm{V}_{\bm{xx}}$. If $T=1$, then ReMT is simply ReM.
We use $\mathcal{T}$ to denote the acceptance region for $\sqrt{n}\hat{\bm{\tau}}_{\bm{X}}$ under ReMT. The theory below extends \citet{morgan2015rerandomization} using the concepts from our Section \ref{sec::ReM}.

\subsection{Multiple correlation between potential outcomes and covariates with tiers}\label{sec:multi_corr_tier}

Similar to Section \ref{sec::R2}, we define the finite population squared multiple correlation 
between the potential outcome $Y(z)$ and the orthogonalized covariates in tier $t$ as $\rho^2_t(z)$,
and the finite population squared multiple correlation 
between the individual causal effect and the orthogonalized covariates in tier $t$ as $\rho^2_t(\tau)$.
We further define an $R^2$-type measure as the function of these finite population quantities and the proportions of group sizes:
$$
\rho_{t}^2  = \frac{S^2_{Y(1)}}{r_1 V_{\tau \tau}}\rho_t^2(1) + 
\frac{S^2_{Y(0)}}{r_0V_{\tau \tau}}\rho_t^2(0) - 
\frac{S^2_{\tau}}{V_{\tau \tau}}\rho_t^2(\tau), \quad  (1\leq t\leq T)
$$
which under the additive causal effect assumption reduces to
$
\rho_{{t}}^2 = 
\rho_{{t}}^2(1) = \rho_{{t}}^2(0),
$
the squared multiple correlation between $\bm{E}[{t}]$ and $Y(1)$ or $Y(0)$.

Under CRE, $\rho^2_t$ is the sampling squared multiple correlation between $\hat{\tau}_Y$ and  $\hat{\bm{\tau}}_{\bm{E}[t]}$, and can be equivalently written as 
\begin{align*}
\rho_{t}^2 & = 
\text{Corr}(\hat{\tau}_Y, \hat{\bm{\tau}}_{\bm{E}[t]})
=\frac{r_1^{-1}S_{Y(1)\mid \bm{E}[t]}^2+r_0^{-1}S_{Y(0)\mid \bm{E}[t]}^2
- S_{\tau\mid \bm{E}[t]}^2
}{{r_1^{-1}}S_{Y(1)}^2 + {r_0^{-1}}S_{Y(0)}^2 - S_{\tau}^2} ,  \quad  (1\leq t\leq T)
\end{align*} 
where $S_{Y(z)\mid \bm{E}[t]}^2$ and $S_{\tau \mid \bm{E}[t]}^2$ are the finite population variances of the projections of the potential outcomes and individual causal effects on the orthogonalized covariates in tier $t.$ For descriptive simplicity, we introduce $\rho^2_{T+1} = 1- \sum_{t=1}^T \rho^2_t = 1- R^2$ for later discussion.

\subsection{Asymptotic distribution of $\hat{\tau}_Y$}

The weak convergence of $\sqrt{n}(\hat{\tau}_Y-\tau_Y, \hat{\bm{\tau}}_{\bm{X}}')$ in (\ref{eq:asym_dist_gen}) still holds 
for ReMT, with region $\mathcal{M}$ replaced by region $\mathcal{T}.$
Intuitively, $\sqrt{n}(\hat{\tau}_Y -\tau_Y)$ can be decomposed into $(T+1)$ parts: the part unrelated to covariates and the $T$ projections onto the space spanned by the orthogonalized covariates in $T$ tiers.
Due to the construction of the orthogonalized covariates, these $(T+1)$ parts are orthogonal to each other and the constraint for balance on the Mahalanobis distance in tier $t$ affects only the $t$-th projection.

As earlier, let $\varepsilon_0\sim \mathcal{N}(0,1)$, and extending earlier notation, let $L_{k_t,a_t}\sim D_{t1}\mid \bm{D}_t'\bm{D}_t\leq a_t$, where $\bm{D}_t = ({D}_{t1},\ldots,{D}_{tk_t}) \sim \mathcal{N}(\bm{0}, \bm{I}_{k_t})$ for $1\leq t \leq T$.
\begin{theorem}\label{thm:asymp_dist_Y_maha_tier}
Under ReMT and Condition \ref{cond:fp},
\begin{align}\label{eq:asymp_dist_Y_maha_tier}
\sqrt{n}(\hat{\tau}_Y-\tau_Y) \mid  \sqrt{n}\hat{\bm{\tau}}_{\bm{X}} \in \mathcal{T} \ \ & \apprsim \ \  
\sqrt{V_{\tau\tau}}
\left(
\rho_{\Tau+1} \cdot  \varepsilon_0 + 
\sum_{t=1}^{T}
\rho_{t} \cdot 
L_{k_t,a_t}
\right),
\end{align}
where $(\varepsilon_0, L_{k_1,a_1}, \ldots, L_{k_T,a_T})$ are mutually independent.
\end{theorem}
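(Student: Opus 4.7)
The plan is to reduce this tiered statement to $T$ essentially independent copies of Theorem 1, using the block-wise Gram--Schmidt construction as the structural backbone. First, I would apply the finite population central limit theorem (in the form invoked for Proposition 2) to the enlarged vector $\sqrt{n}(\hat{\tau}_Y - \tau_Y,\ \hat{\bm{\tau}}_{\bm{E}[1]}',\ \ldots,\ \hat{\bm{\tau}}_{\bm{E}[T]}')$. Since each $\bm{E}_i[t]$ is a linear combination of the original covariates $\bm{X}_i$, Condition \ref{cond:fp} passes through, and the joint limit is a centered Gaussian with covariance matrix determined by the finite-population covariances of $Y(\cdot)$ and $\bm{E}[\cdot]$.

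The crucial algebraic observation, and the one that makes the tiered result clean, is that the Gram--Schmidt construction forces $\bm{S}_{\bm{E}[s],\bm{E}[t]} = \bm{0}$ for $s\neq t$. This means $\Cov(\sqrt{n}\hat{\bm{\tau}}_{\bm{E}[s]}, \sqrt{n}\hat{\bm{\tau}}_{\bm{E}[t]}) = (r_1 r_0)^{-1}\bm{S}_{\bm{E}[s],\bm{E}[t]} = \bm{0}$, so in the Gaussian limit the blocks $\bm{B}_t = \lim \sqrt{n}\hat{\bm{\tau}}_{\bm{E}[t]}$ are mutually independent across $t$. Moreover, since $\bm{X}_i[\overline{T}]$ and $\bm{E}_i = (\bm{E}_i[1],\ldots,\bm{E}_i[T])$ are related by an invertible block-triangular linear transformation, the ReMT acceptance event $\sqrt{n}\hat{\bm{\tau}}_{\bm{X}}\in\mathcal{T}$ is equivalent to the conjunction of $T$ separate events $\{(\sqrt{n}\hat{\bm{\tau}}_{\bm{E}[t]})'(\bm{S}^2_{\bm{E}[t]})^{-1}(\sqrt{n}\hat{\bm{\tau}}_{\bm{E}[t]}) \leq a_t/(r_1 r_0)\}$, one per tier. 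Because the $\bm{B}_t$'s are asymptotically independent, conditioning on $\mathcal{T}$ factorizes into $T$ marginal truncations that act on the $\bm{B}_t$'s separately.

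Next, I would orthogonally decompose $\sqrt{n}(\hat{\tau}_Y-\tau_Y)$ in the limit via linear projection onto each tier: write it as $A_0 + \sum_{t=1}^T A_t$, where $A_t$ is the Gaussian projection onto $\bm{B}_t$ and $A_0$ is the residual. By the tier-wise independence established above, the summands $A_0, A_1, \ldots, A_T$ are mutually independent Gaussians. Matching variances through the definition of $\rho_t^2$ (and the companion identity in Section \ref{sec:multi_corr_tier} that equates $\rho_t^2$ with the sampling squared multiple correlation between $\hat{\tau}_Y$ and $\hat{\bm{\tau}}_{\bm{E}[t]}$), one obtains $\Var(A_t) = V_{\tau\tau}\rho_t^2$ for $1\le t\le T$ and $\Var(A_0) = V_{\tau\tau}(1-\sum_t \rho_t^2) = V_{\tau\tau}\rho_{T+1}^2$.

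Finally, applying the single-tier Theorem \ref{thm:asymp_dist_Y_maha} logic to each block in parallel: conditional on $\bm{B}_t$ lying in its ellipsoid, the projection $A_t$ has the conditional distribution of the first coordinate of a spherically symmetric Gaussian truncated to a ball of radius $\sqrt{a_t}$ (after an orthogonal change of basis that turns the ellipsoid into a ball and places $A_t$ along one coordinate), which is exactly $\sqrt{V_{\tau\tau}}\rho_t \cdot L_{k_t,a_t}$. The residual $A_0$ is orthogonal to every $\bm{B}_t$, hence independent of the truncation and equal in distribution to $\sqrt{V_{\tau\tau}}\rho_{T+1}\varepsilon_0$. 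Assembling these pieces yields \eqref{eq:asymp_dist_Y_maha_tier}. The main obstacle, and the only step requiring real care, is verifying that the ReMT acceptance region genuinely factorizes across tiers in the limiting Gaussian distribution; this rests on combining the Gram--Schmidt zero-cross-covariance identity with the equivalence between the Mahalanobis criterion in the original $\bm{X}$-coordinates and its block-diagonal counterpart in the $\bm{E}$-coordinates.
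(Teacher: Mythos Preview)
Your proposal is correct and follows essentially the same route as the paper's proof: both orthogonally decompose $A$ via projections onto the Gram--Schmidt blocks $\bm{E}[t]$, use the zero cross-covariance $\bm{S}_{\bm{E}[s],\bm{E}[t]}=\bm{0}$ to obtain independence of the tier blocks in the Gaussian limit, identify the projection variances as $V_{\tau\tau}\rho_t^2$, note that the ReMT event factorizes into per-tier Mahalanobis constraints, and invoke the rotation-invariance argument (the paper's Lemma~\ref{lemma:L}) to convert each tier projection into $L_{k_t,a_t}$. The only cosmetic difference is that the paper first invokes Proposition~\ref{thm:asymp_dist_maha} in the $\bm{X}$-coordinates and then applies the linear map $\bm{\Gamma}$ to the limiting Gaussian $\bm{B}$, whereas you propose to run the CLT directly on $(\hat{\tau}_Y-\tau_Y,\hat{\bm{\tau}}_{\bm{E}[1]}',\ldots,\hat{\bm{\tau}}_{\bm{E}[T]}')$; since $\bm{E}$ is a fixed invertible linear transform of $\bm{X}$, these are equivalent.
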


Obviously, in \eqref{eq:asymp_dist_Y_maha_tier}, $\varepsilon_0$ is the part of $\sqrt{n}(\hat{\tau}_Y-\tau_Y)$ that is unrelated to the covariates, and $L_{k_t,a_t}$ is the part related to the orthogonalized covariates  $\bm{E}_i[t]$ in tier $t$. According to Proposition \ref{prop::curious}, the distribution in Theorem \ref{thm:asymp_dist_Y_maha_tier} involves  distributions that are easy to simulate.

\subsection{Asymptotic unbiasedness, sampling variance and quantile ranges}
Theorem \ref{thm:asymp_dist_Y_maha_tier} characterizes the asymptotic behavior of $\sqrt{n}(\hat{\tau}_Y -\tau_Y) $ under 
ReMT, which extends \citet{morgan2015rerandomization} as follows. 

First, the asymptotic distribution in (\ref{eq:asymp_dist_Y_maha_tier}) is symmetric around 0, implying that $\hat{\tau}_Y$ is asymptotically unbiased for ${\tau}_Y$. Therefore, all observed or unobserved covariates have asymptotically balanced means.

\begin{corollary}\label{cor:unbiased_tier}
Under ReMT and Condition \ref{cond:fp}, 
$
\mathbb{E}_{\text{a}}\left\{
\sqrt{n}(\hat{\tau}_Y - \tau_Y) \mid
\sqrt{n}\hat{\bm{\tau}}_{\bm{X}} \in \mathcal{T}
\right\} =0.
$
\end{corollary}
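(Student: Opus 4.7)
}

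The plan is to reduce the statement directly to the distributional characterization supplied by Theorem \ref{thm:asymp_dist_Y_maha_tier} combined with the symmetry properties recorded in Proposition \ref{prop::curious}. Because the paper's convention (stated in the remark following Corollary \ref{coro::ReM-variance}) is that $\mathbb{E}_{\text{a}}(\cdot)$ denotes the expectation of the limiting distribution rather than a limit of finite-$n$ expectations, no uniform integrability or tightness argument is needed: it suffices to compute the mean of the explicit random variable appearing on the right-hand side of \eqref{eq:asymp_dist_Y_maha_tier}.

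First I would invoke Theorem \ref{thm:asymp_dist_Y_maha_tier} to identify the asymptotic distribution of $\sqrt{n}(\hat{\tau}_Y-\tau_Y)\mid \sqrt{n}\hat{\bm{\tau}}_{\bm{X}}\in\mathcal{T}$ with $\sqrt{V_{\tau\tau}}\bigl(\rho_{T+1}\varepsilon_0 + \sum_{t=1}^T \rho_t L_{k_t,a_t}\bigr)$, where $\varepsilon_0, L_{k_1,a_1}, \ldots, L_{k_T,a_T}$ are mutually independent. By linearity of expectation, the mean of this limit equals $\sqrt{V_{\tau\tau}}\bigl(\rho_{T+1}\mathbb{E}[\varepsilon_0] + \sum_{t=1}^T \rho_t \mathbb{E}[L_{k_t,a_t}]\bigr)$, provided each of the component means exists.

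Next I would verify that every component mean is zero. For $\varepsilon_0\sim\mathcal{N}(0,1)$ this is immediate. For each $L_{k_t,a_t}$, Proposition \ref{prop::curious} establishes symmetry around zero together with the finite variance $v_{k_t,a_t}<1$; symmetry and integrability together yield $\mathbb{E}[L_{k_t,a_t}]=0$. Combining these observations gives $\mathbb{E}_{\text{a}}\{\sqrt{n}(\hat{\tau}_Y-\tau_Y)\mid\sqrt{n}\hat{\bm{\tau}}_{\bm{X}}\in\mathcal{T}\}=0$, as claimed.

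There is essentially no obstacle in this proof: the heavy lifting has already been done in establishing Theorem \ref{thm:asymp_dist_Y_maha_tier} (the orthogonal decomposition of $\sqrt{n}(\hat{\tau}_Y-\tau_Y)$ into the unrelated Gaussian part and tier-by-tier truncated parts) and in Proposition \ref{prop::curious} (the symmetry of each $L_{k_t,a_t}$, which follows from the spherical symmetry of $\bm{D}_t$ together with the fact that the truncation region $\{\bm{d}:\bm{d}'\bm{d}\le a_t\}$ is invariant under sign flips of the first coordinate). The argument is the direct tiered analog of Corollary \ref{thm:unbiased_maha}, with the single Mahalanobis constraint replaced by the $T$ independent truncations arising from the block-wise Gram--Schmidt construction of $\bm{E}[1],\ldots,\bm{E}[T]$.
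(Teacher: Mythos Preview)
Your proposal is correct. It matches the informal justification the paper gives in the main text immediately before Corollary~\ref{cor:unbiased_tier} (``the asymptotic distribution in \eqref{eq:asymp_dist_Y_maha_tier} is symmetric around~$0$'').

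The paper's formal proof, however, takes a slightly different and more general route. In the Supplementary Material, Corollary~\ref{cor:unbiased_tier} is obtained as a special case of Corollary~A3 (cor:asym\_unbiased\_proof), which covers any ReG with a symmetric balance criterion $\phi(\bm{\mu},\bm{V}_{\bm{xx}})=\phi(-\bm{\mu},\bm{V}_{\bm{xx}})$. That argument works directly at the level of the limiting Gaussian pair $(A_\infty,\bm{B}_\infty')\sim\mathcal{N}(\bm{0},\bm{V}_\infty)$: since $(A_\infty,\bm{B}_\infty')\sim(-A_\infty,-\bm{B}_\infty')$ and $\bm{B}_\infty\in\mathcal{G}_\infty\Leftrightarrow -\bm{B}_\infty\in\mathcal{G}_\infty$, one gets $\mathbb{E}(A_\infty\mid\bm{B}_\infty\in\mathcal{G}_\infty)=0$ without ever invoking the explicit tier decomposition or Proposition~\ref{prop::curious}. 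The advantage of the paper's route is that it handles ReM, ReMT, and general ReG in one stroke; the advantage of yours is that it is self-contained from Theorem~\ref{thm:asymp_dist_Y_maha_tier} and Proposition~\ref{prop::curious} alone, without needing the weak-convergence machinery of Proposition~A1.
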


The asymptotic sampling variance of $\hat{\bm{\tau}}_{\bm{X}}$ under ReMT has a complicated but conceptually obvious form, and we give it in the Supplementary Material. Below we present only the PRIASV of $\hat{\tau}_Y$;  the PRIASVs for covariates are special cases of the same corollary because covariates are formally ``outcomes'' unaffected by the treatment. Recall the definition of $v_{k_t,a_t}=P(\chi^2_{k_t+2}\leq a_t)/P(\chi^2_{k_t}\leq a_t)$.

\begin{corollary}
\label{coro::ReT-variance}
Under ReMT and Condition \ref{cond:fp}, the asymptotic sampling variance of $\hat{\tau}_Y$ is 
\begin{align}\label{eq:asym_var_tier}
\Var_{\text{a}}\left\{ \sqrt{n}(\hat{\tau}_Y-\tau_Y) \mid \sqrt{n}\hat{\bm{\tau}}_{\bm{X}}  \in \mathcal{T}\right\} 
= & V_{\tau\tau}\left\{
1 - \sum_{t=1}^{T}(1-v_{k_t, a_t})\rho^2_{t}
\right\},
\end{align}
and the PRIASV of $\hat{\tau}_Y$ is
$
\sum_{t=1}^{T}(1-v_{k_t,a_t})\rho_{t}^2.
$
\end{corollary}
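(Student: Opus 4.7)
The plan is to derive both formulas by directly computing the variance of the explicit asymptotic distribution already furnished by Theorem~\ref{thm:asymp_dist_Y_maha_tier}, which represents $\sqrt{n}(\hat{\tau}_Y-\tau_Y)\mid \sqrt{n}\hat{\bm{\tau}}_{\bm{X}}\in\mathcal{T}$ in the limit as the law of
$$
\sqrt{V_{\tau\tau}}\Bigl(\rho_{T+1}\,\varepsilon_0 + \sum_{t=1}^{T}\rho_t\, L_{k_t,a_t}\Bigr),
$$
with $(\varepsilon_0, L_{k_1,a_1},\ldots,L_{k_T,a_T})$ mutually independent. Because $\Var_{\text{a}}$ is defined as the variance of the asymptotic sampling distribution (not the limit of finite-$n$ sampling variances), I do not need a uniform-integrability argument; every component has finite variance, since each $L_{k_t,a_t}$ is supported in the compact ball of radius $\sqrt{a_t}$.

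First I would exploit mutual independence together with $\Var(\varepsilon_0)=1$ to write the variance of the linear combination as
$$
\rho_{T+1}^2 + \sum_{t=1}^{T}\rho_t^2\,\Var(L_{k_t,a_t}) = \rho_{T+1}^2 + \sum_{t=1}^{T}v_{k_t,a_t}\,\rho_t^2,
$$
where the second equality invokes Proposition~\ref{prop::curious}, which supplies $\Var(L_{k_t,a_t}) = v_{k_t,a_t}$. Next I would substitute the identity $\rho_{T+1}^2 = 1 - \sum_{t=1}^T \rho_t^2$ introduced in Section~\ref{sec:multi_corr_tier} to rewrite this expression as $1 - \sum_{t=1}^T (1 - v_{k_t,a_t})\rho_t^2$. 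Multiplying through by $V_{\tau\tau}$ then yields the asymptotic variance formula~(\ref{eq:asym_var_tier}).

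For the PRIASV, I would recall from Section~\ref{sec::CRE-neyman} that under CRE the asymptotic variance of $\sqrt{n}(\hat{\tau}_Y - \tau_Y)$ is $V_{\tau\tau}$, so the reduction under ReMT is $V_{\tau\tau}\sum_{t=1}^T(1-v_{k_t,a_t})\rho_t^2$ and dividing by $V_{\tau\tau}$ produces the claimed percentage. The parenthetical statement about covariates follows by applying the same argument to each coordinate $X_j$ of $\bm{X}$, viewed as an ``outcome'' whose treatment and control potential outcomes coincide: then $S_\tau^2 = 0$, $S_{Y(1)}^2 = S_{Y(0)}^2$, and $\rho_t^2$ collapses to the squared multiple correlation between $X_j$ and the orthogonalized tier $\bm{E}[t]$.

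I do not anticipate any real conceptual obstacle. All the heavy lifting already resides in Theorem~\ref{thm:asymp_dist_Y_maha_tier} (the joint representation of the limit, which uses the block-wise Gram--Schmidt orthogonalization to produce the independent tier-wise summands $L_{k_t,a_t}$) and in Proposition~\ref{prop::curious} (the closed form for $\Var(L_{K,a})$). The only subtlety worth flagging is that this cross-tier orthogonalization is precisely what forces the cross-tier covariance terms to vanish, so the variances truly add without interaction; once that is granted, the proof of the corollary reduces to a short bookkeeping computation.
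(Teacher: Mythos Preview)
Your proposal is correct and essentially identical to the paper's own argument: the paper also reads off the variance directly from the representation in Theorem~\ref{thm:asymp_dist_Y_maha_tier}, uses independence and $\Var(L_{k_t,a_t})=v_{k_t,a_t}$ from Proposition~\ref{prop::curious}, and then invokes $\sum_{t=1}^{T+1}\rho_t^2=1$ to simplify. The only cosmetic difference is that the paper writes the computation in terms of the limiting quantities $V_{\tau\tau,\infty}$ and $\rho_{t,\infty}^2$ before passing back to the finite-$n$ notation (consistent with its stated convention of suppressing limit signs), and it additionally works out the covariate case in more explicit detail than you do; neither point affects the substance of the proof for the corollary as stated.
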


When the causal effect is additive, $\rho_{{t}}^2$ becomes the finite population squared multiple correlation between $\bm{E}[{t}]$ and $Y(0)$. Therefore, Corollary \ref{coro::ReT-variance} is an asymptotic extension of \citet[][Theorem 4.2]{morgan2015rerandomization}. 
When the thresholds $a_t$'s are close to zero, the asymptotic sampling variance on the right hand side of \eqref{eq:asym_var_tier} reduces to $V_{\tau\tau}(1-\sum_{t=1}^T \rho_t^2) = V_{\tau\tau}(1-R^2)$, which is identical to that of the regression adjusted estimator under CRE \citep{lin2013}.

We now compare the quantile range under ReMT to that under  CRE. Let $\nu_{\xi}(\rho_{1}^2, \rho_{2}^2, \ldots, \rho_{\Tau}^2)$ be the $\xi$th quantile of
$
\rho_{\Tau+1} \varepsilon_0 + 
\sum_{t=1}^{T}
\rho_{t}
L_{k_t,a_t}.
$
Although $\nu_{\xi}(\rho_{1}^2, \rho_{2}^2, \ldots, \rho_{\Tau}^2)$ depends also on $p_{a_t}$ and $ k_t$ ($1\leq t\leq K$), we omit them to avoid
notational clatter.
The $(1-\alpha)$ quantile range of the asymptotic distribution of $\sqrt{n}(\hat{\tau}_Y-\tau_Y)$ under ReMT is
\begin{align}\label{eq:asymp_qr_maha_tier}
\QR_\alpha(V_{\tau\tau}, \rho_{1}^2, \ldots, \rho_{\Tau}^2)
 = \left[  \nu_{\alpha/2}(\rho_{1}^2, \ldots, \rho_{\Tau}^2)\sqrt{V_{\tau\tau}},\ \  \nu_{1-\alpha/2}(\rho_{1}^2, \ldots, \rho_{\Tau}^2)\sqrt{V_{\tau\tau}}
\right].
\end{align}

The stronger the correlation between the outcome and the orthogonalized covariates
in tier $t$, the more reduction in quantile range we have when using ReMT rather than CRE.
The following theorem is immediate.

\begin{theorem}\label{thm:qr_reduct_maha_tier}
Under Condition \ref{cond:fp}, the $(1-\alpha)$ quantile range of the asymptotic distribution of $\sqrt{n}(\hat{\tau}_Y-\tau_Y)$ under ReMT is narrower than, or equal to the one under CRE, and the reduction in length of the quantile range is nondecreasing in $\rho_{t}^2$ for all $1\leq t\leq \Tau$.
\end{theorem}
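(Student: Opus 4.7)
The plan is to leverage Theorem \ref{thm:asymp_dist_Y_maha_tier} to write the asymptotic distributions under ReMT and CRE in parallel form. Under ReMT the limit is $\sqrt{V_{\tau\tau}}(\rho_{T+1}\varepsilon_0+\sum_{t=1}^T\rho_t L_{k_t,a_t})$ with independent summands, while under CRE it is $\sqrt{V_{\tau\tau}}\cdot\mathcal{N}(0,1)$; using $\rho_{T+1}^2+\sum_t\rho_t^2=1$, the CRE limit equals in distribution $\sqrt{V_{\tau\tau}}(\rho_{T+1}\varepsilon_0+\sum_t\rho_t G_t)$ with $G_t$ independent standard normals. By Proposition \ref{prop::curious} (symmetric unimodality of $L_{k_t,a_t}$) and Wintner's theorem (convolutions of symmetric unimodal laws are symmetric unimodal), both limits are symmetric and unimodal about $0$, so each $(1-\alpha)$ quantile range is a symmetric interval $[-q,q]$, and comparing lengths is equivalent to comparing the peakedness map $s\mapsto P(|Z|\le s)$.

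The main engine is Birnbaum's (1948) classical result that convolution of independent symmetric unimodal variables preserves peakedness: if $X_i$ and $Y_i$ are each independent symmetric unimodal about $0$ with $|X_i|$ stochastically dominated by $|Y_i|$ for every $i$, then $|\sum_i X_i|$ is stochastically dominated by $|\sum_i Y_i|$. Applying this with $X_t=\rho_t L_{k_t,a_t}$ and $Y_t=\rho_t G_t$ reduces the first assertion (ReMT quantile range no larger than CRE quantile range) to showing that $|L_{k_t,a_t}|$ is stochastically dominated by $|G_t|$. This follows from Proposition \ref{prop::curious}, which gives the representations $|L_{k_t,a_t}|\sim\chi_{k_t,a_t}\sqrt{\beta_{k_t}}$ and $|G_t|\sim\chi_{k_t}\sqrt{\beta_{k_t}}$; since the truncated length $\chi_{k_t,a_t}$ is stochastically dominated by $\chi_{k_t}$, coupling the shared $\sqrt{\beta_{k_t}}$ factor finishes the argument.

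For the monotonicity in $\rho_t^2$, fix $\rho_s$ for all $s\ne t,T+1$ and vary $\rho_t^2\in[0,c^2]$ with $\rho_{T+1}^2=c^2-\rho_t^2$, where $c^2=1-\sum_{s\notin\{t,T+1\}}\rho_s^2$ is constant. By the convolution lemma once more, it suffices to show that $\rho_t L_{k_t,a_t}+\rho_{T+1}\varepsilon_0$ becomes more peaked as $\rho_t^2$ grows. This is the main obstacle: variance monotonicity (easy from $\Var(L_{k_t,a_t})=v_{k_t,a_t}<1$) does not by itself yield peakedness monotonicity when the summands are non-Gaussian. The approach I would take is to condition on $L_{k_t,a_t}=\ell$, exploiting that $\varepsilon_0$ is Gaussian so that $P(|\rho_t\ell+\rho_{T+1}\varepsilon_0|\le s\mid L_{k_t,a_t}=\ell)=\Phi((s-\rho_t\ell)/\rho_{T+1})-\Phi((-s-\rho_t\ell)/\rho_{T+1})$ is explicit, differentiate this along the constraint $\rho_t^2+\rho_{T+1}^2=c^2$, and then integrate the signed derivative against the symmetric unimodal law of $L_{k_t,a_t}$ from Proposition \ref{prop::curious}, using symmetry in $\ell$ and unimodality of the density of $L_{k_t,a_t}$ to identify the sign of the resulting expression.
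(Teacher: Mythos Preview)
Your treatment of the first assertion is correct and is in fact cleaner than the paper's: you show directly, via the representation $|L_{k_t,a_t}|\sim\chi_{k_t,a_t}\sqrt{\beta_{k_t}}$ and $|N(0,1)|\sim\chi_{k_t}\sqrt{\beta_{k_t}}$ from Proposition~\ref{prop::curious}, that $L_{k_t,a_t}$ is more peaked than a standard normal, and then invoke Birnbaum's peakedness-under-convolution lemma. The paper instead obtains ReMT $\le$ CRE as the special case $\rho_t^2=0$ of the monotonicity statement, so your route separates the two claims and gives a self-contained proof of the first.

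For the monotonicity in $\rho_t^2$, however, there is a real gap. After symmetrizing in $\ell$, the integrated derivative along $\rho_t^2+\rho_{T+1}^2=c^2$ splits into two terms of opposite sign, and \emph{symmetry plus unimodality of the law of $L_{k_t,a_t}$ are not sufficient} to pin down the overall sign. Concretely, if $L$ were replaced by a uniform on $[-b,b]$ with $b\in(\sqrt{2\pi}/2,\sqrt{3})$---symmetric, unimodal, variance strictly below $1$---then for small $c>0$ one has $P(L\ge c)=\tfrac12-\tfrac{c}{2b}>\tfrac12-\tfrac{c}{\sqrt{2\pi}}\approx P(\varepsilon_0\ge c)$, so $P(\sqrt{1-\rho^2}\varepsilon_0+\rho L\ge c)$ is not decreasing in $\rho$. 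What makes $L_{k_t,a_t}$ work is not generic unimodality but the specific form of its density, $p_L(\ell)\propto\phi(\ell)\,P(\chi^2_{k_t-1}\le a_t-\ell^2)$: the Gaussian factor $\phi(\ell)$ permits a completing-the-square step that collapses $\phi(\ell)\,\phi((\rho\ell+c)/\sqrt{1-\rho^2})$ into $e^{-c^2/2}\exp\{-(\ell+c\rho)^2/(2(1-\rho^2))\}$, after which the substitution $u=\ell+c\rho$ and pairing $u\leftrightarrow-u$ (using that the remaining weight is nonincreasing in $|u-c\rho|$) give the sign. The paper executes exactly this Gaussian calculation in the one-dimensional case (its Lemma~A1), and then \emph{conditions on the remaining coordinates} $D_{t2},\ldots,D_{tk_t}$ to reduce the general $L_{k_t,a_t}$ to a one-dimensional truncated normal (its Lemmas~A2, A5, A6); this conditioning avoids carrying the non-constant weight $P(\chi^2_{k_t-1}\le a_t-\ell^2)$ through the derivative. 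Your direct integration against $p_L$ can be made to work along the same lines, but you must invoke the explicit Gaussian factor in the density from the proof of Proposition~\ref{prop::curious}, not merely the symmetric-unimodal conclusion.
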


\subsection{Sampling variance estimation and confidence interval}\label{sec:est_gen}

We can estimate $V_{\tau\tau}$ and  $\rho_{{t}}^2\ (1\leq t\leq T)$ in the same way as in ReM, and we estimate $\rho^2_{T+1}$ by $1-\hat{R}^2.$  In practice, we set $\hat{\rho}^2_{t}\ (1\leq t\leq T)$ to $0$ when it is negative due to sampling variability, and standardize their sum to $\hat{R}^2.$ 
According to (\ref{eq:asym_var_tier}) and (\ref{eq:asymp_qr_maha_tier}), we can estimate the sampling variance of $\hat{\tau}_Y$ and $(1-\alpha)$ confidence intervals for $\tau_Y$ by replacing the unknown quantities with their point estimates. The sampling variance estimator is smaller than \citet{Neyman:1923}'s sampling variance estimator for CRE, and the confidence interval is shorter than \citet{Neyman:1923}'s confidence interval for CRE; both are asymptotically conservative in general, and only when the residual from the linear projection of individual causal effect on the covariates is constant, are they asymptotically exact. Therefore, analyzing data from ReMT as from CRE, the resulting sampling variance estimator and confidence intervals are overly conservative. These intuitive statements appear to require notationally lengthy proofs, which are relegated to the Supplementary Material. Specifically, the proof for the conservativeness of confidence intervals utilizes the unimodality of the $L_{k_t,a_t}$'s.

\section{Rerandomization with more general balance criterion}\label{sec:ReG}
\subsection{More general balance criterion}\label{sec:gen_bal_cri}

As pointed out by \citet{morgan2012rerandomization},  the criterion can be any accept-reject function of the treatment assignment and covariate balance. 
We can always use randomization tests for a sequence of sharp null hypotheses, and thereby construct fiducial confidence intervals by inverting these randomization tests (e.g., under the additive causal effects assumption). 
In this section, we discuss the repeated sampling properties of the difference-in-means estimator, where we
consider covariate balance criteria that depend only on $\sqrt{n}\hat{\bm{\tau}}_{\bm{X}}$ and $\bm{V}_{\bm{xx}}$, including ReM and ReMT as special cases, and write the binary covariate balance indicator function as $\phi(\sqrt{n}\hat{\bm{\tau}}_{\bm{X}}, \bm{V}_{\bm{xx}})$. Let $\mathcal{G}$ denote the acceptance region for rerandomization with the general covariate balance criterion $\phi$ (ReG),
i.e., $\sqrt{n}\hat{\bm{\tau}}_{\bm{X}} \in \mathcal{G}=\left\{
\bm{\mu}: \phi(\bm{\mu}, \bm{V}_{\bm{xx}}) = 1
\right\}$.

For technical reasons, we require $\phi$ to satisfy the following conditions. First, $\phi$ is almost surely continuous. Second, 
$\Var(\bm{B}\mid \phi(\bm{B},\bm{V}_{\bm{xx}})=1)$, as a function of $\bm{V}_{\bm{xx}}$ with $\bm{B} \sim \mathcal{N}(\bm{0},\bm{V}_{\bm{xx}})$, is continuous for all $\bm{V}_{\bm{xx}}>0$. Third,  $P(\phi(\bm{B}, \bm{V}_{\bm{xx}})=1)>0$, for all $\bm{V}_{\bm{xx}}>0$ with $\bm{B} \sim \mathcal{N}(\bm{0},\bm{V}_{\bm{xx}})$. Fourth, $\phi(\bm{\mu},\bm{V}_{\bm{xx}})=\phi(-\bm{\mu},\bm{V}_{\bm{xx}})$ for all $\bm{\mu}$ and $\bm{V}_{\bm{xx}}>0$. The first two conditions impose certain smoothness on $\phi$, and the third condition prevents the acceptance region from being a set of measure zero.
The fourth condition imposes symmetry considerations, because relabeling the treatment and control units should not change the balance. 
Both ReM and ReMT satisfy these conditions. Below, we summarize theory in parallel with Sections \ref{sec::ReM} and \ref{sec::ReT}.

\subsection{Asymptotic sampling properties}
\label{sec:inf_ReG}

The weak convergence in (\ref{eq:asym_dist_gen}) holds with $\mathcal{M}$ replaced by $\mathcal{G}.$ The projection of 
$A$ onto $\bm{B}$ is $\bm{V}_{\tau \bm{x}}\bm{V}_{\bm{xx}}^{-1}\bm{B}$, and the residual is $\varepsilon=A - \bm{V}_{\tau \bm{x}}\bm{V}_{\bm{xx}}^{-1}\bm{B}$. Therefore, the asymptotic distribution of $\hat{\tau}_Y$ is 
\begin{align}\label{eq:asym_gen_re}
\sqrt{n}(\hat{\tau}_Y -\tau_Y) \mid \sqrt{n}\hat{\bm{\tau}}_{\bm{X}} \in\mathcal{G} \apprsim  \varepsilon + \bm{V}_{\tau \bm{x}}\bm{V}_{\bm{xx}}^{-1}\bm{B} \mid \bm{B} \in \mathcal{G},
\end{align}
where $\varepsilon \sim \mathcal{N}(0, V_{\tau\tau}(1-R^2))$ is independent of $\bm{B}\sim \mathcal{N}(\bm{0},  \bm{V}_{\bm{xx}})$.
The asymptotic distribution in \eqref{eq:asym_gen_re}
can be easily simulated. 

The symmetry condition $\phi(\bm{\mu},\bm{V}_{\bm{xx}})=\phi(-\bm{\mu},\bm{V}_{\bm{xx}})$ implies that the distribution in (\ref{eq:asym_gen_re}) is symmetric around $0$, which further implies that $\hat{\tau}_Y$ is asymptotically unbiased for $\tau_Y$. Viewing covariates as outcomes unaffected by the treatment, all observed or unobserved covariates asymptotically have the same means in treatment and control groups.

\subsection{Advice for the investigator}

Because $\bm{V}_{\bm{xx}}$ is known in finite population inference, we can choose ReGs that result in better covariate balance before the physical experiments. Because $\Var_{\text{a}}\left\{ \sqrt{n} \hat{\tau}_{\bm{X}} \mid \sqrt{n}\hat{\bm{\tau}}_{\bm{X}} \in \mathcal{G} \right\} = \Var(\bm{B} \mid \bm{B} \in \mathcal{G})$, it is important to check whether $\Var(\bm{B} \mid \bm{B} \in \mathcal{G}) \leq  \Var(\bm{B})$ holds, i.e., $\Var(\bm{B}) - \Var(\bm{B} \mid \bm{B} \in \mathcal{G}) $ is positive semi-definite.
This ensures that ReG using acceptance region $\mathcal{G}$ reduces the sampling covariance matrix of the difference-in-means of the covariates. Otherwise, we need to change the criterion. If 
$
\bm{V}_{\bm{xx},\phi}   \equiv   \Var(\bm{B} \mid \bm{B} \in \mathcal{G})\leq \Var(\bm{B})=\bm{V}_{\bm{xx}}
$
holds, 
then we can derive the PRIASV  for all observed covariates under ReG. From the decomposition 
\begin{align*} 
\Var_{\text{a}}\left\{ \sqrt{n}(\hat{\tau}_Y-\tau_Y) \mid \sqrt{n}\hat{\bm{\tau}}_{\bm{X}} \in \mathcal{G} \right\} 
& = {V}_{\tau\tau}(1-R^2) + \bm{V}_{\tau \bm{x}}\bm{V}_{\bm{xx}}^{-1} 
\bm{V}_{\bm{xx},\phi}
\bm{V}_{\bm{xx}}^{-1}\bm{V}_{\bm{x}\tau},
\end{align*}
we can derive 
the PRIASV of $\sqrt{n}(\hat{\tau}_Y-\tau_Y)$ under ReG.
However, without imposing further conditions on $\phi$, there is no guarantee that the asymptotic quantile range of $\hat{\tau}$ will be shorter under ReG than that under CRE. Therefore, in the design stage of the experiment, we recommend choosing balance criteria that are expected to lead to both variance and quantile range reductions, such as ReM and ReMT.

\subsection{Sampling variance estimation and confidence interval}
We can show that $s_{Y(z)}^2$, $s_{Y(z)\mid \bm{X}}^2$ and $\bm{s}_{Y(z),\bm{X}}^2$ are asymptotically unbiased for $S_{Y(z)}^2$, $S_{Y(z)\mid \bm{X}}^2$ and $\bm{S}_{Y(z),\bm{X}}^2$ under ReG. Therefore, we can unbiasedly estimate $V_{\tau\tau}$ by $\hat{V}_{\tau\tau}$ as earlier,
and $R^2$ by $\hat{R}^2$ in the same form as (\ref{eq:R2_hat_maha}).
We can then estimate $\bm{V}_{\tau \bm{x}}=r_1^{-1}\bm{S}_{Y(1), \bm{X}} + r_0^{-1}\bm{S}_{Y(0), \bm{X}}$ and the sampling variance of $\widehat{\tau}_Y$ by replacing the unknown quantities with their point estimates.
We can then estimate the asymptotic distribution in (\ref{eq:asym_gen_re}), as well as its quantile ranges.
If ${\bm{V}}_{\tau \bm{x}}\bm{V}_{\bm{xx}}^{-1}\bm{B} \mid \bm{B} \in \mathcal{G}$ is unimodal, as in ReM and ReMT, the final confidence interval is asymptotically conservative. 
We relegate more details to the Supplementary Material.

\section{An education example with tiers of covariates}\label{sec:edu_eg}
We illustrate our theory using the data from the Student Achievement and Retention Project \citep{angrist2009incentives}, a randomized evaluation of academic services and incentives at one of the satellite campuses of a large Canadian university, involving college freshmen. A treatment group of 150 students was offered an array of support services and substantial cash awards for meeting a target first year grade point average (GPA), and a control group of many more (1006) students received only standard university support services.


To illustrate the benefit of rerandomization, we use the 15 covariates as listed in Table \ref{tab:student_cov}, and exclude students with missing values, resulting in 118 students in the treatment group and 856 in the control.
To make the simulation relevant to the real data, we fix unknown parameters based on some simple model fitting: We  fit a linear regression of the observed first year GPA on the treatment indicator, all covariates and their interactions,
and use the fitted model to generate all potential outcomes under non-additivity.  
Note that the generating models for the potential outcomes are not linear in the covariates themselves. 
To make the data generating process realistic, we simulate eight pseudo sets of potential outcomes using the fitted model with different choices for the variance of the residuals. 
The error terms for $Y(1)$ and $Y(0)$ are independent, and therefore conditional on the covariates, the potential outcomes are simulated as uncorrelated, but they have a positive correlation marginally.
The final potential outcomes are all truncated to lie on $[0,4]$, mimicking the value of the GPA. We choose different variances of residuals such that the values of $R^2$ for the eight simulated data sets are located approximately evenly within interval $[0,0.5]$. One choice for the variance of residuals is the one estimated from the fitted linear model, and the corresponding $R^2$ is about $0.23$.

\begin{table}
\centering
\caption{Covariates in the Student Achievement and Retention Project.
The numbers of covariates in these three tiers are 1, 4 and 10, and the thresholds are $a_1 = 0.016$, $a_2 = 1.064$ and $a_3 = 4.865.$ }
\label{tab:student_cov}
\begin{tabular}{|c|c|}
\hline
Tier  & Covariates\\
\hline
 1  & high school GPA\\
\hline
 2  & whether lives at home, gender, age,\\
&  whether rarely puts off studying for tests\\
\hline
 3  & whether mother is a college graduate, whether mother is a high school graduate,  \\
&  mother tongue (English or other), whether plans to work while in school,  \\
&  whether father is a college graduate, whether father is a high school graduate,\\
&  whether never puts off studying for tests, whether wants more than a bachelor degree,\\
&  whether intends to finish in 4 years,  whether at the first choice school\\
\hline
\end{tabular}
\end{table}

Table \ref{tab:student_cov} partitions the covariates into three tiers with decreasing a priori importance to the outcome. 
As suggested by \citet{morgan2015rerandomization}, for tiers with increasing numbers of covariates, 
we choose $a_t$ such that $P(\chi^2_{k_t}\leq a_t) = (0.001)^{1/3} = 0.1$ for $t=1,2,3$.
We simulate data under ReMT, and obtain the confidence intervals based on our asymptotic theory for ReMT and \citet{Neyman:1923}'s results for CRE. 
Figure \ref{fig:star_coverage_all} shows the empirical coverage probabilities of our and \citet{Neyman:1923}'s confidence intervals, showing that \citet{Neyman:1923}'s CRE confidence intervals are highly conservative.
Note that there are $15$ covariates and only $118$ units in the treatment group, and
the sample size is not extremely large. Despite this, our asymptotic confidence interval works well in this example.

To evaluate the performance of ReMT compared to CRE, we compare the average length of \citet{Neyman:1923}'s confidence interval under CRE with the confidence interval under ReMT. From Figure \ref{fig:star_ci_length}, the percentage reduction in average lengths of the confidence intervals under ReMT compared to \citet{Neyman:1923}'s under CRE is nondecreasing in $R^2$.
We also compare the empirical $95\%$ quantile range of $\hat{\tau}_Y$ under ReMT and CRE, and the percentage reduction in the lengths of quantile ranges are close to the percentage reduction for average lengths of confidence intervals.
When $R^2$ is close to that of the real data set (i.e. $0.23$), the percentage increase in the effective sample size, that is, 
the sample size needed in CRE in order for $\hat{\tau}_Y$ to achieve the same $95\%$ quantile range under ReMT, is about $24\%$. When $R^2$ is about twice as large as with the real data (i.e. 0.5), the percentage increase in the effective sample size increases to $80\%$.

\begin{figure}[htb]
\centering
\begin{subfigure}{.5\textwidth}
  \centering
  \includegraphics[width=0.8\linewidth]{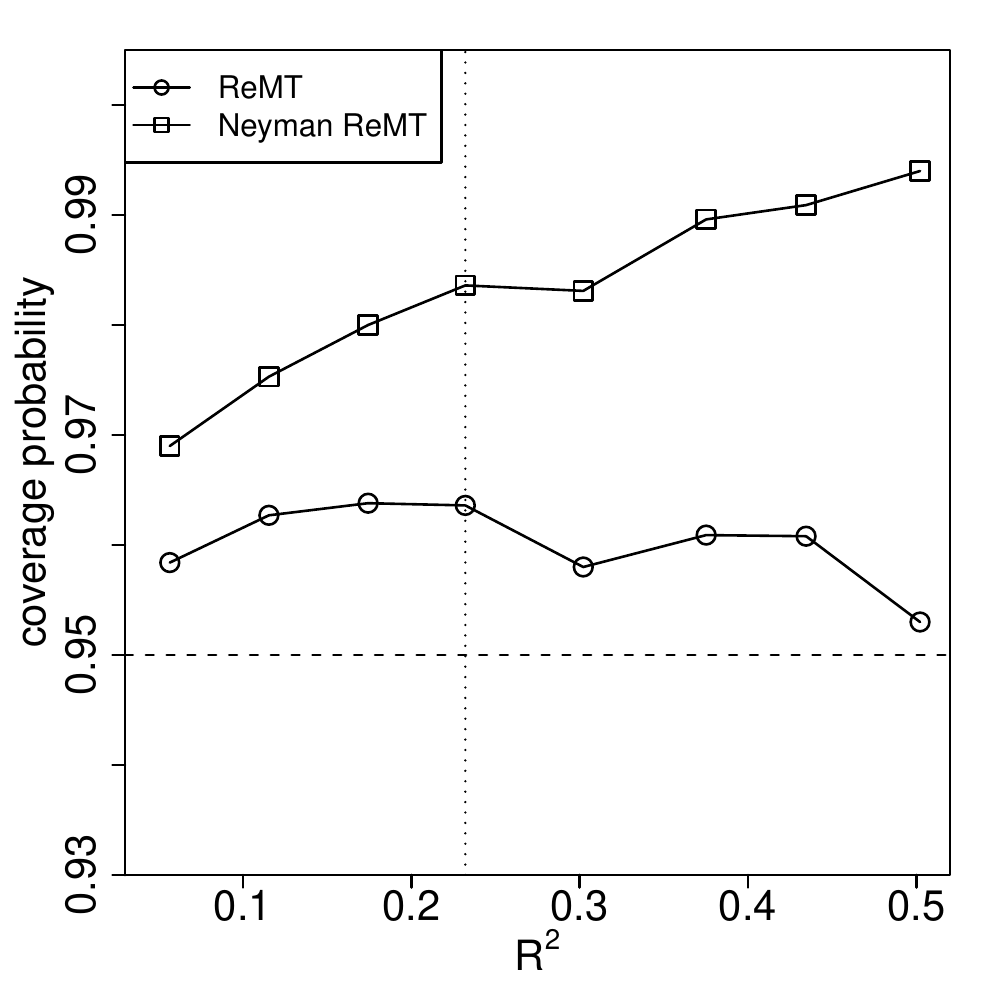}
   \caption{
    \centering Empirical coverage probabilities of our and \citet{Neyman:1923}'s $95\%$ confidence intervals under ReMT}
  \label{fig:star_coverage_all}
\end{subfigure}%
\begin{subfigure}{.5\textwidth}
  \centering
  \includegraphics[width=.8\linewidth]{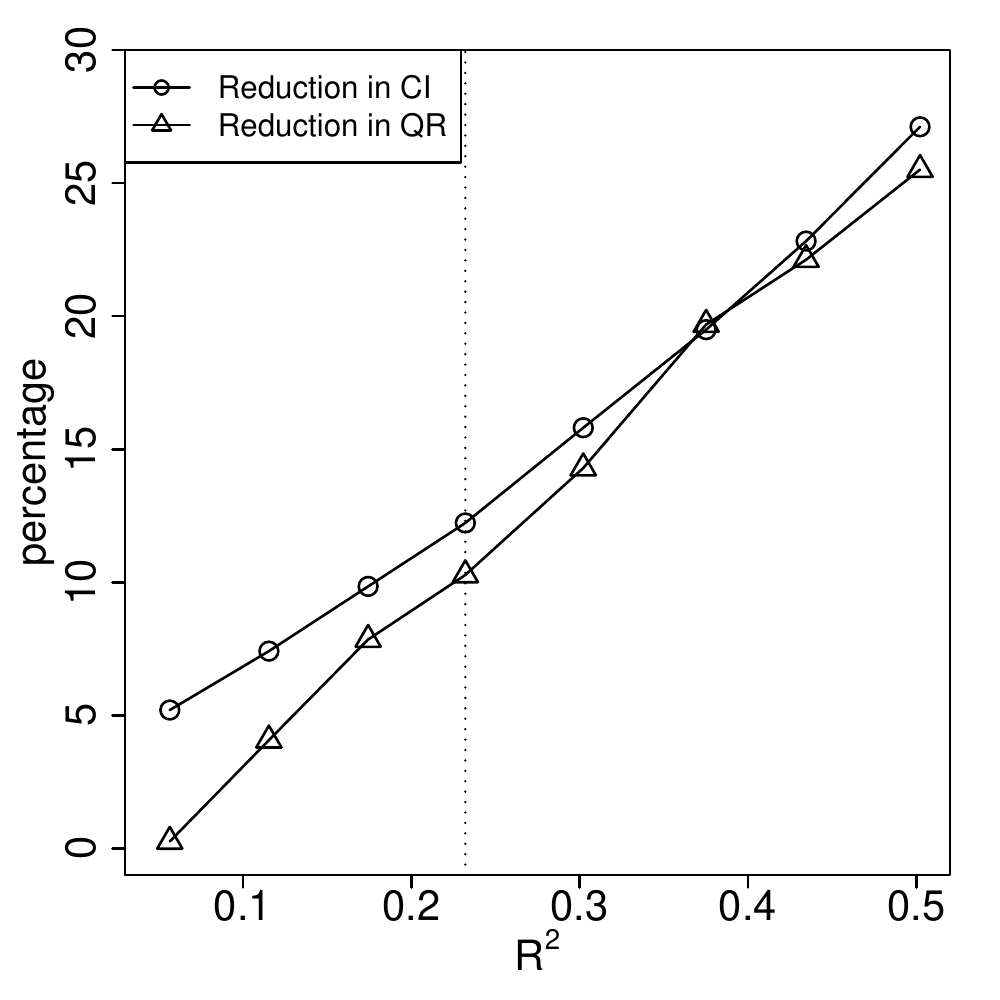}
   \caption{
    \centering Percentage reductions of average lengths of confidence intervals ($\bigcirc$) and quantile ranges ($\triangle$) comparing ReMT with CRE}
    \label{fig:star_ci_length}
\end{subfigure}
\caption{Eight data sets simulated based on the Student Achievement and Retention Project}
\end{figure}

\section{Conclusions, Connections and Extensions}\label{sec:conclude}
Extending \citet{morgan2012rerandomization,morgan2015rerandomization},
we show, using analysis and simulations, that rerandomization balances covariates better than complete randomization, and provides a more precise difference-in-means estimator for the average causal effect. 
The asymptotic distributions of the difference-in-means estimator under rerandomization with strigent constraints are close to that of the regression adjusted estimator under CRE \citep{lin2013}, implying that rerandomization does the covariate adjustment in the design stage and avoids outcome modeling. The new asymptotic distributions allow us to construct confidence intervals for the average causal effect, when the classical \citet{Neyman:1923}'s inference for CRE is overly conservative.

\section*{Appendix}
In this Appendix, we provide proofs for the asymptotic distribution of $\sqrt{n}(\hat{\tau}_Y-\tau_Y)$ under ReM and ReMT, and the representation for random variable $L_{K,a}$. 
First, we need the following two lemmas.

\begin{lemma}\label{lemma:L}
Let $L_{K,a}\sim D_1\mid \bm{D}'\bm{D}\leq a$, where $\bm{D}=(D_1,\ldots,D_K)'\sim \mathcal{N}(\bm{0}, \bm{I}_K)$. For any $K$ dimensional unit vector $\bm{h}$,  we have 
$
L_{K,a} \sim \bm{h}'\bm{D} \mid \bm{D}'\bm{D}\leq a.
$
\end{lemma}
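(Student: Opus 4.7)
The plan is to exploit the rotational (spherical) symmetry of both the standard Gaussian distribution and the truncation event $\bm{D}'\bm{D}\leq a$. The statement is essentially the assertion that one-dimensional linear functionals of an isotropic Gaussian, conditioned on a spherically symmetric event, depend on the functional only through its norm.

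First, I would construct an orthogonal matrix $\bm{Q}\in\mathbb{R}^{K\times K}$ whose first row is the given unit vector $\bm{h}'$. Such a $\bm{Q}$ exists by extending $\bm{h}$ to an orthonormal basis of $\mathbb{R}^K$ (e.g., via Gram--Schmidt). Define $\bm{D}^\ast = \bm{Q}\bm{D}$. The first coordinate of $\bm{D}^\ast$ equals $\bm{h}'\bm{D}$, and its squared norm equals
\[
(\bm{D}^\ast)'\bm{D}^\ast \;=\; \bm{D}'\bm{Q}'\bm{Q}\bm{D} \;=\; \bm{D}'\bm{D},
\]
since $\bm{Q}'\bm{Q}=\bm{I}_K$. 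Because $\bm{D}\sim\mathcal{N}(\bm{0},\bm{I}_K)$, the orthogonal transformation gives $\bm{D}^\ast\sim\mathcal{N}(\bm{0},\bm{Q}\bm{Q}')=\mathcal{N}(\bm{0},\bm{I}_K)$, so $\bm{D}^\ast$ and $\bm{D}$ have the same joint law.

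Consequently, the pair $(\bm{h}'\bm{D},\,\bm{D}'\bm{D})$ and the pair $(D_1^\ast,\,(\bm{D}^\ast)'\bm{D}^\ast)$ have identical joint distributions, and the latter equals the distribution of $(D_1,\bm{D}'\bm{D})$ in law. Conditioning each on the event $\{\bm{D}'\bm{D}\leq a\}$ (well-defined since it has positive probability) preserves this equality of joint laws, so
\[
\bm{h}'\bm{D}\,\big|\,\bm{D}'\bm{D}\leq a \;\;\sim\;\; D_1\,\big|\,\bm{D}'\bm{D}\leq a \;=\; L_{K,a},
\]
which is the claim. There is no real obstacle here; the only thing to be careful about is cleanly packaging the joint-distribution equality so that conditioning on the norm constraint transfers correctly to the marginal of the linear functional.
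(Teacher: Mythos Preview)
Your proof is correct and follows essentially the same approach as the paper: construct an orthogonal matrix with first row $\bm{h}'$, use the rotational invariance of the standard Gaussian and the norm constraint, and transfer the equality in law through conditioning. The paper's version is more terse, but the argument is identical.
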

\begin{proof}[Proof of Lemma \ref{lemma:L}]
We construct an orthogonal matrix $\bm{H}$ whose first row is $\bm{h}'$. Then $\bm{D}\sim \bm{H}\bm{D}$. Therefore, $L_{K,a} \sim D_1 \mid \bm{D}'\bm{D}\leq a \sim \bm{h}'\bm{D} \mid (\bm{H}\bm{D})'\bm{H}\bm{D}\leq a\sim \bm{h}'\bm{D} \mid \bm{D}'\bm{D}\leq a$.
\end{proof}

\begin{lemma}\label{lemma:U_K}
Let $U_K$ be the first coordinate of the uniform random vector over the $(K-1)$ dimensional unit sphere. Let $S$ be a random sign taking $\pm 1$ with probability $1/2$,  $\beta_K \sim \text{Beta}\left(1/2, (K-1)/2\right)$ be a Beta random variable degenerating to a point mass at $1$ when $K=1$, and $(S, \beta_K)$ are mutually independent. Then
$
U_K \sim S\sqrt{\beta_K}.
$
\end{lemma}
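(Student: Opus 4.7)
The plan is to represent the uniform distribution on the $(K-1)$-dimensional unit sphere via a Gaussian projection, and then recognize the resulting distribution of the squared first coordinate as a Beta distribution. Concretely, let $\bm{D}=(D_1,\ldots,D_K)'\sim\mathcal{N}(\bm{0},\bm{I}_K)$. By spherical symmetry of the standard Gaussian, $\bm{D}/\|\bm{D}\|$ is uniformly distributed over the $(K-1)$-dimensional unit sphere and is independent of $\|\bm{D}\|$, so
\[
U_K \sim \frac{D_1}{\sqrt{D_1^2+\cdots+D_K^2}}.
\]

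Next I would compute the distribution of $U_K^2$. Since $D_1^2\sim\chi^2_1$ is independent of $D_2^2+\cdots+D_K^2\sim\chi^2_{K-1}$ (for $K\geq 2$), the ratio
\[
U_K^2 = \frac{D_1^2}{D_1^2+(D_2^2+\cdots+D_K^2)}
\]
is a standard Beta random variable with parameters $(1/2,(K-1)/2)$, i.e., $U_K^2\sim\beta_K$. For the edge case $K=1$, the $0$-dimensional unit sphere is $\{-1,+1\}$ with equal probability, $\beta_1$ degenerates to $1$, and $S\sqrt{\beta_1}=S$ has exactly the same distribution as $U_1$.

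Then I would establish independence of the sign and magnitude of $U_K$. Let $S_0=\mathrm{sign}(U_K)=\mathrm{sign}(D_1)$. Because $D_1\sim -D_1$ and $D_1$ is independent of $(D_2,\ldots,D_K)$, the sign $\mathrm{sign}(D_1)$ is independent of $|D_1|$ and of $(D_2,\ldots,D_K)$, hence independent of $|D_1|/\|\bm{D}\|=|U_K|=\sqrt{U_K^2}$. Moreover $S_0$ takes values $\pm 1$ with equal probability. Therefore the pair $(S_0,|U_K|)$ has the same joint distribution as $(S,\sqrt{\beta_K})$, and hence
\[
U_K = S_0\cdot |U_K| \sim S\sqrt{\beta_K},
\]
as claimed.

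There is no real obstacle here: the argument is a routine application of the Gaussian representation of the uniform distribution on the sphere, the standard construction of Beta variables as ratios of independent chi-squares, and the sign-symmetry of the Gaussian. The only delicate point is remembering to verify the $K=1$ degenerate case separately, which is immediate from the convention that $\beta_1$ is a point mass at $1$.
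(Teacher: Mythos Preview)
Your proposal is correct and follows essentially the same route as the paper: represent the uniform vector on the sphere via $\bm{D}/\|\bm{D}\|$ with $\bm{D}\sim\mathcal{N}(\bm{0},\bm{I}_K)$, identify $D_1^2/\sum_k D_k^2$ as $\text{Beta}(1/2,(K-1)/2)$ via the chi-square ratio, and use sign symmetry to factor $U_K$ as sign times $\sqrt{\beta_K}$, handling $K=1$ separately. The only cosmetic difference is that the paper introduces an external sign $S$ independent of $\bm{D}$ and argues $D_1\sim S|D_1|$, whereas you work with the internal sign $\mathrm{sign}(D_1)$ and show it is independent of $|U_K|$; both are equivalent formulations of the same argument.
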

\begin{proof}[Proof of Lemma \ref{lemma:U_K}]
When $K=1$, it is easy to see Lemma \ref{lemma:U_K} holds. 
When $K\geq 2$, 
let $\bm{D}=(D_1,D_2,\ldots,D_K)' \sim \mathcal{N}(\bm{0},\bm{I}_K)$.The standardized normal
random vector with unit length is uniformly distributed on the unit sphere \citep{fang1989symmetric}, and therefore
$U_K \sim D_1/\sqrt{\bm{D}'\bm{D}}$. 
Let $S$ be a random sign independent of $\bm{D}$; then $D_1\sim S|D_1|$. Thus $(D_1, D_2,\ldots,D_K)\sim (S|D_1|, D_2,\ldots,D_K)$, 
$$
U_K \sim  D_1/\sqrt{\bm{D}'\bm{D}} \sim  S|D_1|/\sqrt{\bm{D}'\bm{D}} = {S}\cdot\sqrt{\frac{D_1^2}{\sum_{k=1}^K D_k^2}}, 
$$
and $S$ is independent of ${D_1^2}/{(\sum_{k=1}^K D_k^2)}$.
Because $D_1^2, \ldots, D_K^2$ follow i.i.d. Gamma distributions with shape parameter $1/2$ and scale parameter $2$,
$
{D_1^2}/{(\sum_{k=1}^K D_k^2)}\sim \text{Beta}(1/2, (K-1)/2)
$  is independent of $S$. Therefore,
Lemma \ref{lemma:U_K} holds.
\end{proof}

\begin{proof}[{\bf Proof of Theorem \ref{thm:asymp_dist_Y_maha}}]
The linear projection of $A$ on $\bm{B}$ is $\bm{V}_{\tau \bm{x}}\bm{V}_{\bm{xx}}^{-1}\bm{B}$, which has variance $c^2=\bm{V}_{\tau \bm{x}}\bm{V}_{\bm{xx}}^{-1}\bm{V}_{\bm{x}\tau} = V_{\tau\tau} R^2.$
The residual from the linear projection of $A$ on $\bm{B}$ is 
$$
\varepsilon =A - \bm{V}_{\tau \bm{x}}\bm{V}_{\bm{xx}}^{-1}\bm{B} \sim \mathcal{N}(0, (1-R^2)V_{\tau\tau}) \sim \sqrt{V_{\tau\tau}(1-R^2)} \cdot \varepsilon_0,
$$ 
where $\varepsilon_0\sim \mathcal{N}(0,1)$. Let $\bm{h}'=\bm{V}_{\tau \bm{x}}\bm{V}_{\bm{xx}}^{-1/2}/c$ be the standardized vector of $\bm{V}_{\tau \bm{x}}\bm{V}_{\bm{xx}}^{-1/2}$ with unit length, and $\bm{D}=\bm{V}_{\bm{xx}}^{-1/2}\bm{B} \sim \mathcal{N}(\bm{0}, \bm{I}_K)$ be the standardization of $\bm{B}$. Then
$$
A = \varepsilon + \bm{V}_{\tau \bm{x}}\bm{V}_{\bm{xx}}^{-1}\bm{B} = \varepsilon + \bm{V}_{\tau \bm{x}}\bm{V}_{\bm{xx}}^{-1/2}\bm{D} = \varepsilon + c \bm{h}' \bm{D}.
$$
According to Proposition \ref{thm:asymp_dist_maha} and Lemma \ref{lemma:L},
\begin{align*}
&\sqrt{n}(\hat{\tau}_Y-\tau_Y) \mid  \sqrt{n}\hat{\bm{\tau}}_{\bm{X}} \in \mathcal{M} \ \  \apprsim \ \  A \mid \bm{B} \in \mathcal{M}\\
 \sim & \ \ 
\varepsilon + c \bm{h}' \bm{D} \mid \bm{D}'\bm{D}\leq a \ \ 
 \sim \ \ \varepsilon + c L_{K,a}
 \sim \ \ 
\sqrt{V_{\tau\tau}}\left( 
\sqrt{1-R^2}\cdot\varepsilon_0 + \sqrt{R^2} \cdot L_{K,a}   \right),
\end{align*}
where $\varepsilon$, or $\varepsilon_0$, is independent of $L_{K,a}$.
\end{proof}

\begin{proof}[{\bf Proof of Theorem \ref{thm:asymp_dist_Y_maha_tier}}]
We use $\bm{\Gamma}$ to denote the linear transformation from $\bm{X}_i$ to $\bm{E}_i$, i.e. $\bm{E}_i = \bm{\Gamma}\bm{X}_i$, where $\bm{\Gamma}$ depends only on $\bm{V}_{\bm{xx}}$.
Correspondingly, let
$\bm{G} = \bm{\Gamma} \bm{B} = (\bm{G}_{1}', \bm{G}_{2}', \ldots,\bm{G}_{T}')'$
be the block-wise Gram--Schmidt orthogonalization of $\bm{B}$, where $\bm{G}_{t}$ is a $k_t$ dimensional random vector. 
Let 
$\bm{D}_t = \Var(\bm{G}_{t})^{-1/2}\bm{G}_{t} \sim \mathcal{N}(\bm{0}, \bm{I}_{k_t})$ be the standardization of $\bm{G}_{t}$. The linear projection of $A$ on $\bm{G}_{t}$ is $\Cov(A, \bm{G}_{t})\Var(\bm{G}_{t})^{-1}\bm{G}_{t}$, with variance $c_{t}^2 = \Cov(A, \bm{G}_{t})\Var(\bm{G}_{t})^{-1}\Cov( \bm{G}_{t}, A)$. 
Because $(\hat{\tau}_Y, \hat{\bm{\tau}}_{\bm{E}}') = (\hat{\tau}_Y, \bm{\Gamma}\hat{\bm{\tau}}_{\bm{X}}')$ has the same covariance matrix as $(A,\bm{G}')=(A, \bm{\Gamma} \bm{B}')$,
the sampling variance of linear projection of $\hat{\tau}_Y$ on $\hat{\bm{\tau}}_{\bm{E}[t]}$ is the same as
  the variance of linear projection of $A$ on $\bm{G}_{t}$. 
The former is $V_{\tau\tau}\rho_{t}^2$, and the latter is $c_{t}^2$.    
Therefore, $c_{t}^2=V_{\tau\tau}\rho_{t}^2$.
The residual from the linear regression of $A$ on $\bm{B}$ (or equivalently on $\bm{G}$) is
\begin{eqnarray*}
\varepsilon & = & A -\bm{V}_{\tau \bm{x}}\bm{V}_{\bm{xx}}^{-1}\bm{B}
= A - \sum_{t=1}^\Tau \Cov(A, \bm{G}_{t})\Var(\bm{G}_{t})^{-1}\bm{G}_{t}\\
&\sim & \mathcal{N}\left(
0, V_{\tau\tau}\left( 1 - \sum_{t=1}^T \rho_{t}^2 \right)
\right)
\sim \mathcal{N}(0,V_{\tau\tau}\rho^2_{T+1})\sim \sqrt{V_{\tau\tau}} \cdot \rho_{T+1}\varepsilon_0,
\end{eqnarray*}
where $\varepsilon_0 \sim \mathcal{N}(0,1)$.
Let $\bm{h}_{t}'=\Cov(A, \bm{G}_{t})\Var(\bm{G}_{t})^{-1/2}/c_{t}$ be the standardized vector of $ \Cov(A, \bm{G}_{t})\Var(\bm{G}_{t})^{-1/2}$ with unit length. 
Then $A$ has the following decomposition:
\begin{eqnarray*}
A & =&  \varepsilon + \sum_{t=1}^\Tau \Cov(A, \bm{G}_{t})\Var(\bm{G}_{t})^{-1}\bm{G}_{t}
 =  \varepsilon + \sum_{t=1}^\Tau \Cov(A, \bm{G}_{t})\Var(\bm{G}_{t})^{-1/2}\bm{D}_t\\
& = & \varepsilon + \sum_{t=1}^\Tau c_{t}\bm{h}_{t}' \bm{D}_t.
\end{eqnarray*}
Because Proposition \ref{thm:asymp_dist_maha} holds for ReMT with $\mathcal{M}$ replaced by $\mathcal{T}$, and $(\bm{D}_1, \ldots, \bm{D}_T)$ are mutually independent, 
\begin{align*}
\sqrt{n}(\hat{\tau}_Y-\tau_Y) \mid  \sqrt{n}\hat{\bm{\tau}}_{\bm{X}} \in \mathcal{T}  & \apprsim A \mid \bm{B} \in \mathcal{T} \sim 
\varepsilon + \sum_{t=1}^\Tau c_{t} 
\left( \bm{h}_{t}'\bm{D}_t \mid \bm{D}_t'\bm{D}_t\leq a_t \right).
\end{align*}
According to Lemma \ref{lemma:L}, 
\begin{align*}
\sqrt{n}(\hat{\tau}_Y-\tau_Y) \mid  \sqrt{n}\hat{\bm{\tau}}_{\bm{X}} \in \mathcal{T}  \apprsim \varepsilon + \sum_{t=1}^T c_{t}L_{k_t,a_t}
 \sim 
\sqrt{V_{\tau\tau}}
\left(
\rho_{\Tau+1} \varepsilon_0 + 
\sum_{t=1}^{T}
\rho_{t}
L_{k_t,a_t}
\right),
\end{align*}
where $L_{k_t,a_t}\sim \bm{D}_{t1} \mid \bm{D}_t'\bm{D}_t\leq a_t$, $(\varepsilon, L_{k_1,a_1}, \ldots, L_{k_T,a_T})$ are mutually independent, and $(\varepsilon_0, L_{k_1,a_1}, \ldots, L_{k_T,a_T})$ are mutually independent.
Therefore, Theorem \ref{thm:asymp_dist_Y_maha_tier} holds.
\end{proof}

\begin{proof}[{\bf Proof of Proposition \ref{prop::curious}}]
Let $\bm{D}=(D_1,\ldots,D_K)' \sim \mathcal{N}(\bm{0}, \bm{I}_K)$ and $L_{K,a}\sim D_1\mid \bm{D}'\bm{D}\leq a$.
First, we show that $L_{K,a}$ is symmetric and unimodal around $0.$ Let $f(\cdot)$ be the density of standard Gaussian distribution. It is easy to show that $L_{K,a}\sim D_1 \mid \bm{D}'\bm{D}\leq a$ is symmetric around $0$, and has density
\begin{align*}
p(l) = \frac{f(l)P(\sum_{k=2}^K {D}_k^2 \leq a-l^2)}{P(\bm{D}'\bm{D}\leq a)}
= f(l)\frac{P(\chi^2_{K-1}\leq a-l^2)}{P(\chi^2_K\leq a)}
 \quad  (-\sqrt{a}< l <\sqrt{a}).
\end{align*}
Therefore, $L_{K,a}$ is unimodal, because for any $|l_1|<|l_2|<\sqrt{a}$, 
\begin{align*}
p(l_1) = & f(l_1)\frac{P(\chi^2_{K-1}\leq a-l_1^2)}{P(\chi^2_K\leq a)} \geq  f(l_2)\frac{P(\chi^2_{K-1}\leq a-l_2^2)}{P(\chi^2_K\leq a)}
=  p(l_2).
\end{align*}

Second, the variance formula for $L_{K,a}$ follows from \citet[][Theorem 3.1]{morgan2012rerandomization}.

Third, we represent $L_{K,a}$ by known distributions.  Let $R_K=\sqrt{\bm{D}'\bm{D}}$ be the length of vector $\bm{D}$,
 and $\bm{D}/R_K$ be the normalized vector of $\bm{D}$ with unit length. 
From the property of the multivariate Gaussian distribution, $R_K^2\sim \chi^2_K,$ $\bm{D}/R_K$ follows the uniform distribution on the $K-1$ dimensional unit sphere, and 
 they are independent \citep{fang1989symmetric}. Let $U_K$ be the first coordinate of $\bm{D}/R_K$, then 
$
L_{K,a}\sim U_K R_K \mid R_K^2\leq a.
$
Because $\chi_{K,a}\sim R_K \mid R_K^2\leq a$, and $\chi_{K,a}$ is independent of $U_K$, we have $L_{K,a}\sim U_K R_K \mid R_K^2\leq a\sim  \chi_{K,a} U_K$.
According to Lemma \ref{lemma:U_K}, we have $L_{K,a}\sim \chi_{K,a}U_K \sim \chi_{K,a} S\sqrt{\beta_K}$.
\end{proof}


\bibliographystyle{plainnat}
\bibliography{causal}

\newpage
\setcounter{page}{1}
\begin{center}
\bf \huge 
Supplementary Material
\end{center}

\bigskip

\setcounter{equation}{0}
\setcounter{section}{0}
\setcounter{figure}{0}
\setcounter{example}{0}
\setcounter{proposition}{0}
\setcounter{corollary}{0}
\setcounter{theorem}{0}
\setcounter{table}{0}

\renewcommand {\theproposition} {A\arabic{proposition}}
\renewcommand {\theexample} {A\arabic{example}}
\renewcommand {\thefigure} {A\arabic{figure}}
\renewcommand {\thetable} {A\arabic{table}}
\renewcommand {\theequation} {A\arabic{equation}}
\renewcommand {\thelemma} {A\arabic{lemma}}
\renewcommand {\thesection} {A\arabic{section}}
\renewcommand {\thetheorem} {A\arabic{theorem}}
\renewcommand {\thecorollary} {A\arabic{corollary}}

Section \ref{sec:numer_ex} uses simulations to evaluate the asymptotic approximations for the sampling distributions of $\hat{\tau}$, as well as the coverage probablilities of $95\%$ confidence intervals for $\tau$ under ReM. Section \ref{sec:weak_conv_proof} shows the weak convergence of $\sqrt{n}(\hat{\tau}_Y-\tau_Y, \hat{\bm{\tau}}_{\bm{X}}')$ under ReG, the asymptotic unbiasedness of $\hat{\tau}_Y$ and balance in means of all covariates, and the formula for the sampling squared multiple correlation between $\hat{\tau}_Y$ and $\hat{\bm{\tau}}_{\bm{X}}$ under CRE in Proposition \ref{prop:R_sampling_cor}. Section \ref{sec::improvement} shows the percentage reductions in asymptotic sampling variances and lengths of quantile ranges under rerandomization. Section \ref{sec::inference} shows the asymptotic conservativeness of sampling variance estimators and confidence intervals.

\section{Numerical Examples}\label{sec:numer_ex}

We conduct numerical examples where the group sizes are very different and the potential outcomes are simulated from a nonlinear model with binary values. Let
$r_1 = 0.1, r_0 = 0.9$ and $K=3$, so the two treatment group sizes are very different. Let $\bm{\beta}_1 = (2,3,4)'$ and $\bm{\beta}_0 = (0,1,1)'$. 
The covariates for all units are i.i.d samples from $ X_{k} \sim \text{Bernoulli}(0.5)$ for $1\leq k\leq K$, where $(X_1,\ldots,X_K)$ are mutually independent.
The binary potential outcomes are i.i.d samples from:
\begin{align}\label{eq:num_gen}
& Y(z) = \mathbb{I}\left\{ z + 
\bm{\beta}_z'(\bm{X}-0.5\bm{1}_K)
 + \delta_{z}\geq 0\right\}, \quad \delta_{0}, \delta_{1} \overset{\text{i.i.d}}{\sim} \mathcal{N}(0,1), \ \ 
(z=0,1),
\end{align}
where $\mathbb{I}(\cdot)$ is the indicator function.
We simulate three data sets with different sample sizes $(1000,3000,5000)$, and the causal effects for these simulated data sets are not additive. For example, when $n=1000,$ $\tau_Y = 0.121, S_{Y(1)}^2=0.24, S_{Y(0)}^2=0.25$, and $S_{\tau}^2=0.33$.
Figure \ref{fig:binary_sim} shows the histograms of $\sqrt{n}(\hat{\tau}_Y-\tau_Y)$ under both ReM with $p_a=0.001$ and CRE, based on $10^5$ rerandomizations and $10^5$ complete randomizations, 
as well as their asymptotic approximations using (\ref{eq:dist_maha_Gaussian_trun_norm}) and Gaussian distributions. Although the potential outcomes models are not linear, the asymptotic distributions 
are close to their corresponding theoretical repeated sampling distributions, and the asymptotic approximations become better as the sample sizes increase.
\begin{figure}[htb]
    \centering
    \begin{subfigure}[b]{0.32\textwidth}
        \centering
        \includegraphics[height=1.8in]{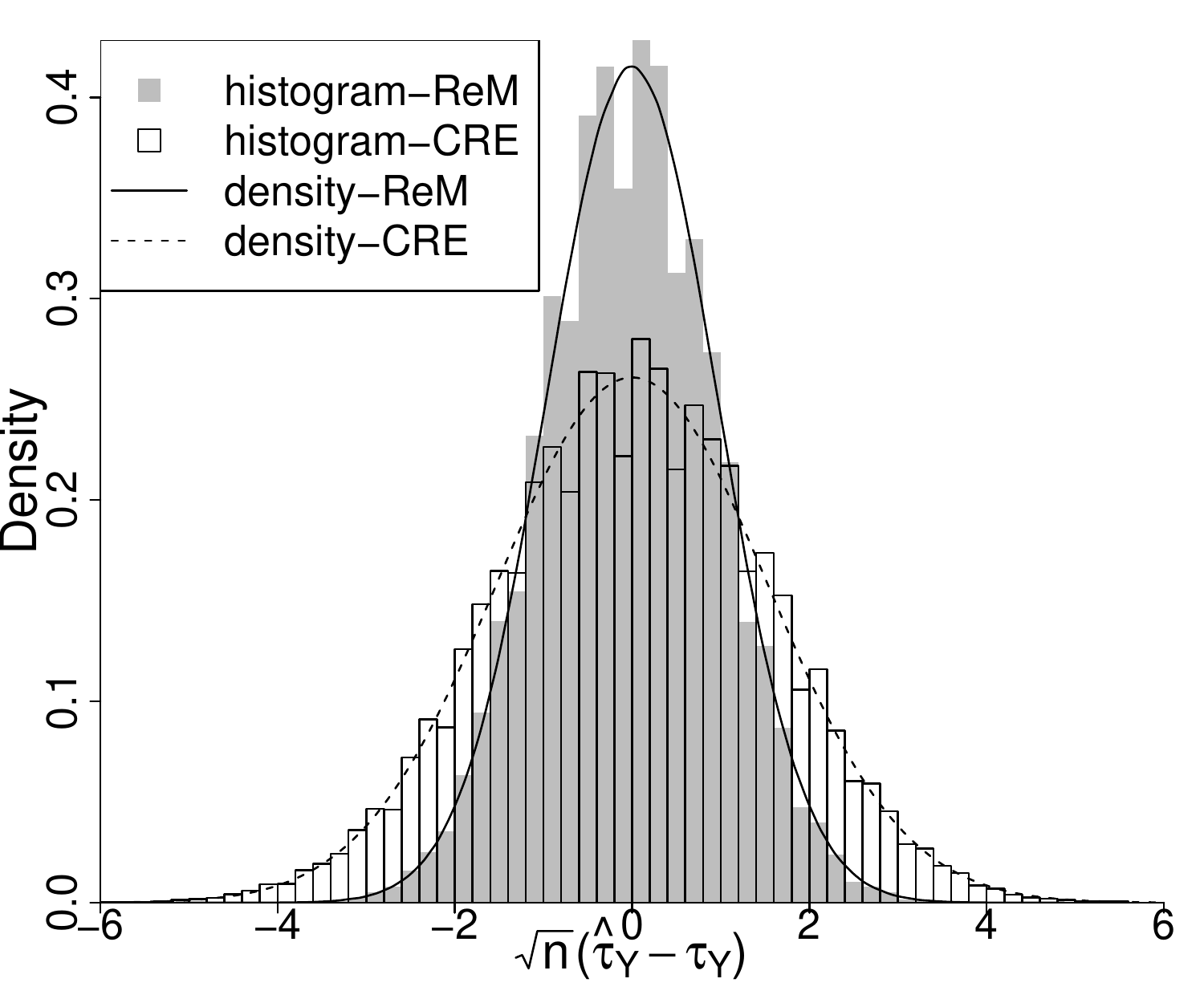}
        \caption{$n=1000$}
    \end{subfigure}%
    \begin{subfigure}[b]{0.32\textwidth}
        \centering
        \includegraphics[height=1.8in]{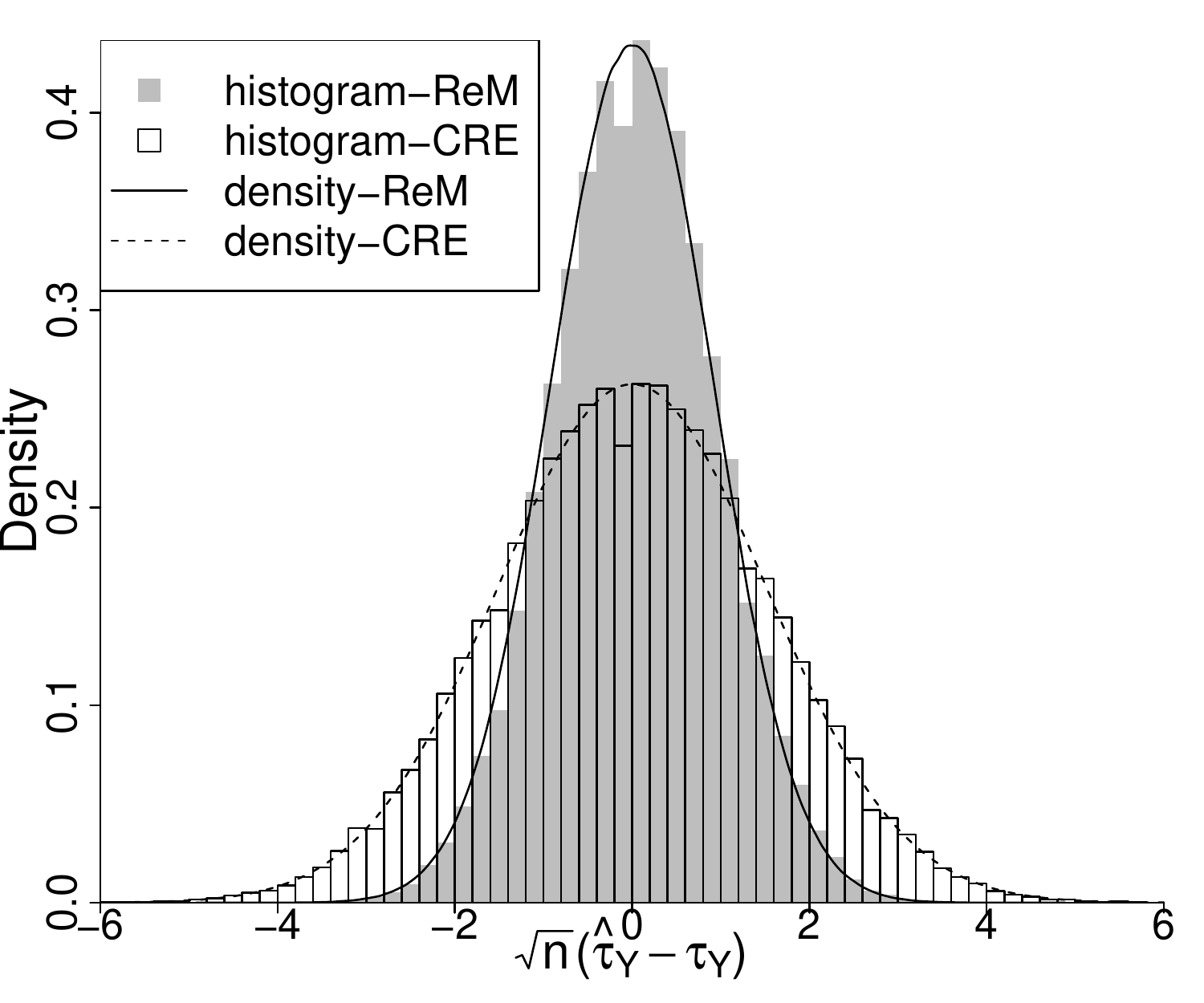}
        \caption{$n=3000$}
    \end{subfigure}%
	\begin{subfigure}[b]{0.32\textwidth}
        \centering
        \includegraphics[height=1.8in]{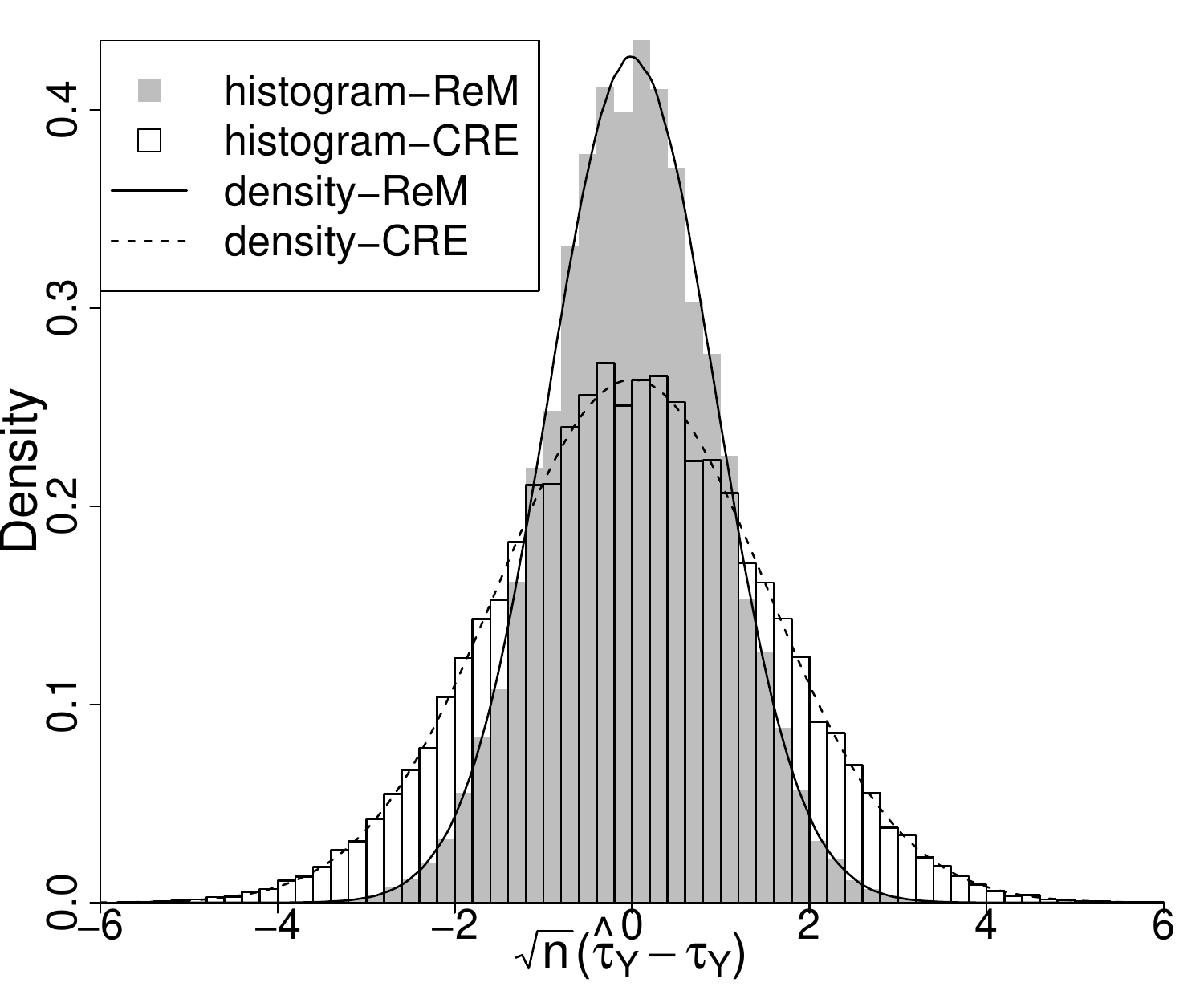}
        \caption{$n=5000$}
    \end{subfigure}
    \caption{Histograms and asymptotic densities of $\sqrt{n}(\hat{\tau}_Y-\tau_Y)$. The grey and white histograms are the empirical distributions, and the solid and dotted lines are the asymptotic densities under ReM and CRE.}\label{fig:binary_sim}
\end{figure}

In the above numerial example, the causal effects are not additive. Figure \ref{fig:binary_sim_coverage} shows the empirical coverage probabilities of the $95\%$ confidence intervals with different sample sizes $(1000, 2000, 3000, 4000, 5000)$ and treatment and control proportions $(r_1, r_0)=(0.1,0.9)$. We also generate other data sets with additive causal effects, in which the data generating process is the same as \eqref{eq:num_gen} except that $Y_i(0)$ is replaced by $Y_i(1)-\tau_Y$. As anticipated, with non-additive causal effects, the empirical coverage probabilities are larger than $95\%$, but with additive causal effects, the empirical coverage probabilities are close to $95\%$.
\begin{figure}[htb]
  \centering
    \includegraphics[width=0.6\textwidth]{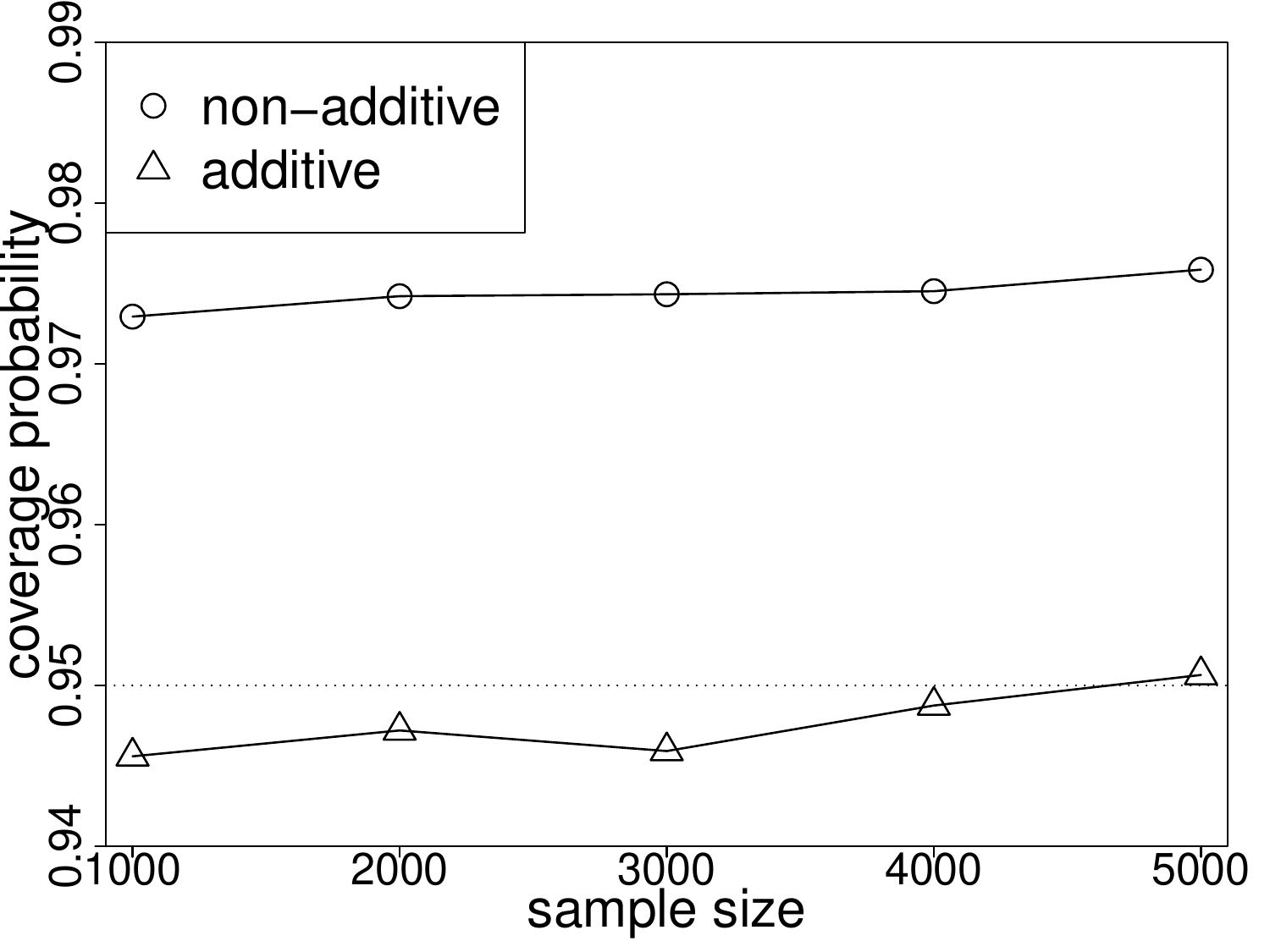}
      \caption{Empirical coverage probabilities of $95\%$ confidence intervals with ($\triangle$) and without ($\bigcirc$) additivity}\label{fig:binary_sim_coverage}
\end{figure}

\section{More Details on Weak Convergence}\label{sec:weak_conv_proof}
We consider the asymptotic distribution of $\sqrt{n}(\hat{\tau}_Y-\tau_Y, \hat{\bm{\tau}}_{\bm{X}}')$ under rerandomization with general balance criterion $\phi(\sqrt{n}\hat{\bm{\tau}}_{\bm{X}}, \bm{V}_{\bm{xx}})$ satisfying the conditions in Section \ref{sec:gen_bal_cri}:
\begin{itemize}
\item[(1)] $\phi$ is almost surely continuous.
\item[(2)] $\Var(\bm{B}\mid \phi(\bm{B},\bm{V}_{\bm{xx}})=1)$, as a function of $\bm{V}_{\bm{xx}}$ with $\bm{B} \sim \mathcal{N}(\bm{0},\bm{V}_{\bm{xx}})$, is continuous for all $\bm{V}_{\bm{xx}}>0$.
\item[(3)] $P(\phi(\bm{B},\bm{V}_{\bm{xx}})=1)>0$, for any  $\bm{V}_{\bm{xx}}>0$ with $\bm{B} \sim \mathcal{N}(\bm{0},\bm{V}_{\bm{xx}})$.
\item[(4)]$\phi(\bm{\mu},\bm{V}_{\bm{xx}})=\phi(-\bm{\mu},\bm{V}_{\bm{xx}})$, for all $\bm{\mu}$ and $\bm{V}_{\bm{xx}}>0$. 
\end{itemize}

We write the limit of $\bm{V}$ as
\begin{align*}
\bm{V}_\infty
=
\lim_{n\rightarrow\infty} \bm{V}
= 
\lim_{n\rightarrow\infty}
\begin{pmatrix}
V_{\tau\tau} & \bm{V}_{\tau \bm{x}}\\
\bm{V}_{\bm{x}\tau} & \bm{V}_{\bm{xx}}
\end{pmatrix} 
= 
\begin{pmatrix}
V_{\tau\tau,\infty} & \bm{V}_{\tau \bm{x},\infty}\\
\bm{V}_{\bm{x}\tau,\infty} & \bm{V}_{\bm{xx},\infty}
\end{pmatrix},
\end{align*}
which is assumed to be positive definite. Let $\mathcal{G} = \{\bm{\mu}: \phi(\bm{\mu}, \bm{V}_{\bm{xx}})=1\}$ be the acceptance region for $\sqrt{n}\hat{\bm{\tau}}_{\bm{X}}$, and $\mathcal{G}_{\infty}=\{\bm{\mu}: \phi(\bm{\mu}, \bm{V}_{\bm{xx},\infty})=1\}$ be its limit.
\begin{proposition}\label{pro:asymp_dist_gen}
Under ReG, as $n\rightarrow \infty$, 
\begin{align*}
\left.
\begin{pmatrix}
\sqrt{n}(\hat{\tau}_Y-\tau_Y) \\
\sqrt{n}\hat{\bm{\tau}}_{\bm{X}}
\end{pmatrix}\ 
\right| \ \sqrt{n}\hat{\bm{\tau}}_{\bm{X}} \in \mathcal{G} \ 
\overset{d}{\longrightarrow} \ 
\left.
\begin{pmatrix}
A_{\infty}\\
\bm{B}_{\infty}
\end{pmatrix}\ 
\right|\ 
 \bm{B}_{\infty} \in \mathcal{G}_{\infty},
\end{align*}
where $(A_{\infty}, \bm{B}_{\infty}')\sim \mathcal{N}(0,\bm{V}_{\infty})$,
in the sense that, for any continuity set $\mathcal{C}$ of $(A_{\infty},\bm{B}'_{\infty})\mid  \bm{B}_{\infty} \in \mathcal{G}_{\infty}$, 
\begin{eqnarray*}
P\left\{
\sqrt{n}(\hat{\tau}_Y-\tau_Y, \hat{\bm{\tau}}_{\bm{X}}') \in \mathcal{C} \mid \sqrt{n}\hat{\bm{\tau}}_{\bm{X}} \in \mathcal{G}
\right\}
\rightarrow
 P\left\{
(A_{\infty}, \bm{B}_{\infty}') \in \mathcal{C}\mid  \bm{B}_{\infty} \in \mathcal{G}_{\infty}
\right\}.
\end{eqnarray*}
\end{proposition}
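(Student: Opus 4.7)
The plan is to reduce the conditional weak convergence under ReG to the unconditional weak convergence under CRE, which is already guaranteed by Condition \ref{cond:fp} and the finite population central limit theorem. Since rerandomization is equivalent to drawing the assignment from CRE conditional on the event $\{\sqrt{n}\hat{\bm{\tau}}_{\bm{X}}\in\mathcal{G}\}$, the distribution on the left-hand side of the claim is exactly this conditional CRE distribution. Under Condition \ref{cond:fp}, the finite population CLT yields $\sqrt{n}(\hat{\tau}_Y-\tau_Y,\hat{\bm{\tau}}_{\bm{X}}')\Rightarrow(A_\infty,\bm{B}_\infty')\sim\mathcal{N}(\bm{0},\bm{V}_\infty)$ under CRE. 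Because $\bm{V}_{\bm{xx}}$ is a deterministic sequence with $\bm{V}_{\bm{xx}}\to\bm{V}_{\bm{xx},\infty}$, Slutsky's theorem upgrades this to the joint convergence $(\sqrt{n}(\hat{\tau}_Y-\tau_Y),\sqrt{n}\hat{\bm{\tau}}_{\bm{X}},\bm{V}_{\bm{xx}})\Rightarrow(A_\infty,\bm{B}_\infty,\bm{V}_{\bm{xx},\infty})$.

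Next I would apply the extended continuous mapping theorem to the map $(y,\bm{\mu},\bm{W})\mapsto (y,\bm{\mu},\phi(\bm{\mu},\bm{W}))$. By assumption (1) on $\phi$, its discontinuity set has Lebesgue measure zero; since $\bm{B}_\infty$ is a non-degenerate Gaussian vector (as $\bm{V}_{\bm{xx},\infty}$ is positive definite) and $\bm{V}_{\bm{xx},\infty}$ is deterministic, this discontinuity set is null under the limit law. The continuous mapping theorem then delivers
\begin{align*}
\bigl(\sqrt{n}(\hat{\tau}_Y-\tau_Y),\sqrt{n}\hat{\bm{\tau}}_{\bm{X}},\phi(\sqrt{n}\hat{\bm{\tau}}_{\bm{X}},\bm{V}_{\bm{xx}})\bigr)
\Rightarrow
\bigl(A_\infty,\bm{B}_\infty,\phi(\bm{B}_\infty,\bm{V}_{\bm{xx},\infty})\bigr).
\end{align*}
Equivalently, the boundary $\partial\mathcal{G}_\infty$ is $\bm{B}_\infty$-null, so $\mathcal{G}_\infty$ is a Portmanteau continuity set for the limit law of $\sqrt{n}\hat{\bm{\tau}}_{\bm{X}}$.

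For any continuity set $\mathcal{C}$ of the conditional limit $(A_\infty,\bm{B}_\infty')\mid\bm{B}_\infty\in\mathcal{G}_\infty$, the set $\mathcal{C}\cap(\mathbb{R}\times\mathcal{G}_\infty)$ is a continuity set of the joint limit in the preceding display (its boundary is a null set, being contained in the union of $\partial\mathcal{C}$ and $\mathbb{R}\times\partial\mathcal{G}_\infty$). Portmanteau therefore gives
\begin{align*}
P\bigl\{\sqrt{n}(\hat{\tau}_Y-\tau_Y,\hat{\bm{\tau}}_{\bm{X}}')\in\mathcal{C},\ \sqrt{n}\hat{\bm{\tau}}_{\bm{X}}\in\mathcal{G}\bigr\}
\rightarrow
P\bigl\{(A_\infty,\bm{B}_\infty')\in\mathcal{C},\ \bm{B}_\infty\in\mathcal{G}_\infty\bigr\}.
\end{align*}
Taking $\mathcal{C}$ to be the full space specializes this to $P(\sqrt{n}\hat{\bm{\tau}}_{\bm{X}}\in\mathcal{G})\to P(\bm{B}_\infty\in\mathcal{G}_\infty)$, and the denominator limit is strictly positive by assumption (3). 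Dividing the two convergences yields the claimed conditional convergence.

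The main obstacle is the justification that $\phi$ interacts well with weak convergence---i.e., that $\partial\mathcal{G}_\infty$ is a $\bm{B}_\infty$-null set and that $P(\sqrt{n}\hat{\bm{\tau}}_{\bm{X}}\in\mathcal{G})$ converges to a positive limit rather than degenerating. This is precisely what assumptions (1)--(3) on $\phi$ are engineered to guarantee: (1) controls the size of the discontinuity set, the absolute continuity of the Gaussian limit (from positive definiteness of $\bm{V}_{\bm{xx},\infty}$) translates Lebesgue-null to $\bm{B}_\infty$-null, (2) guarantees that the limiting conditional covariance is itself the natural limit of the finite-$n$ acceptance probability/variance, and (3) keeps the denominator bounded away from zero. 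The symmetry condition (4) is not needed for this proposition but will be used later to deduce asymptotic unbiasedness of $\hat{\tau}_Y$.
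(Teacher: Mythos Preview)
Your overall strategy---CLT under CRE, continuous mapping to append $\phi$, then Portmanteau and divide---is exactly the paper's. But the Portmanteau step as you have written it has a gap. You apply Portmanteau with the set $\mathcal{C}\cap(\mathbb{R}\times\mathcal{G}_\infty)\subset\mathbb{R}^{K+1}$, yet the event you need on the left is $\{\sqrt{n}\hat{\bm{\tau}}_{\bm{X}}\in\mathcal{G}\}$, where $\mathcal{G}=\{\bm{\mu}:\phi(\bm{\mu},\bm{V}_{\bm{xx}})=1\}$ varies with $n$ through $\bm{V}_{\bm{xx}}$. There is no single fixed Borel set in $\mathbb{R}^{K+1}$ that simultaneously represents this family of events, so Portmanteau applied to the pair $(A_\infty,\bm{B}_\infty)$ with your set would only give convergence of $P\{\sqrt{n}\hat{\bm{\tau}}_{\bm{X}}\in\mathcal{G}_\infty\}$, not of $P\{\sqrt{n}\hat{\bm{\tau}}_{\bm{X}}\in\mathcal{G}\}$. (There is also a dimension mismatch: ``the joint limit in the preceding display'' is the triple in $\mathbb{R}^{K+2}$, whereas your set lives in $\mathbb{R}^{K+1}$.)

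This is precisely why you appended $\phi$ to form the triple; the fix is to carry the Portmanteau argument through in $\mathbb{R}^{K+2}$. Since $\phi$ is $\{0,1\}$-valued, the paper takes an open interval $\mathcal{I}=(0.5,1.5)$ so that $\{\phi(\sqrt{n}\hat{\bm{\tau}}_{\bm{X}},\bm{V}_{\bm{xx}})\in\mathcal{I}\}=\{\phi(\sqrt{n}\hat{\bm{\tau}}_{\bm{X}},\bm{V}_{\bm{xx}})=1\}=\{\sqrt{n}\hat{\bm{\tau}}_{\bm{X}}\in\mathcal{G}\}$ holds for every $n$, while $\partial\mathcal{I}=\{0.5,1.5\}$ carries zero mass under the limit. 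One then checks that $\mathcal{C}\times\mathcal{I}$ is a continuity set of the triple limit $\bigl(A_\infty,\bm{B}_\infty,\phi(\bm{B}_\infty,\bm{V}_{\bm{xx},\infty})\bigr)$: its boundary is contained in $(\partial\mathcal{C}\times\overline{\mathcal{I}})\cup(\overline{\mathcal{C}}\times\partial\mathcal{I})$, the first piece is null because $\mathcal{C}$ is a continuity set of the \emph{conditional} limit, and the second is null because $\phi$ never hits $\{0.5,1.5\}$. Portmanteau on the triple then yields exactly the numerator and denominator convergences you claim, and the division step goes through by assumption~(3).
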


\begin{proof}[Proof of Proposition \ref{pro:asymp_dist_gen}]
According to the finite population central limit theorem,
\begin{align*}
\left(
\sqrt{n}(\hat{\tau}_Y-\tau_Y), 
\sqrt{n}\hat{\bm{\tau}}_{\bm{X}}', 
\bm{V}_{\bm{xx}}
\right) \ 
\overset{d}{\longrightarrow} \ 
\left(
A_{\infty}, 
\bm{B}_{\infty}', 
\bm{V}_{\bm{xx},\infty}
\right).
\end{align*}
The continuous mapping theorem implies
\begin{align}\label{eq:joint_A_B_phi}
\left(
\sqrt{n}(\hat{\tau}_Y-\tau_Y),
\sqrt{n}\hat{\bm{\tau}}_{\bm{X}}',
\phi( \sqrt{n}\hat{\bm{\tau}}_{\bm{X}}, 
\bm{V}_{\bm{xx}} )
\right) \ 
\overset{d}{\longrightarrow} \ 
\left(
A_{\infty},
\bm{B}_{\infty}',
\phi(\bm{B}_{\infty},\bm{V}_{\bm{xx}, \infty})
\right).
\end{align}
Let $\mathcal{I}=(0.5, 1.5)\in \mathbb{R}^1$ be an open interval. Becasue $\phi$ is a 0-1 function, $\mathcal{I}$ is a continuity set of $\phi(\bm{B}_{\infty},\bm{V}_{\bm{xx},\infty})$, in the sense that $P(\phi(\bm{B}_{\infty},\bm{V}_{\bm{xx},\infty})\in \partial \mathcal{I})=0$.
 According to  (\ref{eq:joint_A_B_phi}) and Portmanteau's Theorem, as $n\rightarrow \infty,$
\begin{align*}
P\left\{
 \phi( \sqrt{n}\hat{\bm{\tau}}_{\bm{X}}, \bm{V}_{\bm{xx}} ) = 1
\right\}
=
P\left\{
 \phi( \sqrt{n}\hat{\bm{\tau}}_{\bm{X}}, \bm{V}_{\bm{xx}} ) \in \mathcal{I}
\right\} \rightarrow
P\left\{
 \phi(\bm{B}_{\infty}, \bm{V}_{\bm{xx},\infty} )\in \mathcal{I}
\right\}
= P\left\{
 \phi(\bm{B}_{\infty}, \bm{V}_{\bm{xx},\infty} ) =1
\right\}.
\end{align*}
For any continuity set $\mathcal{C}\in \mathbb{R}^{K+1}$ of $(A_{\infty},\bm{B}_{\infty}')\mid \phi(\bm{B}_{\infty}, \bm{V}_{\bm{xx},\infty} )=1$, $\mathcal{C}\times\mathcal{I}$ is also a continuity set of $(A_{\infty},\bm{B}_{\infty}', \phi(\bm{B}_{\infty}, \bm{V}_{\bm{xx},\infty} ))$. 
This is because
\begin{eqnarray*}
&&P\left\{
(A_{\infty},\bm{B}_{\infty}', \phi(\bm{B}_{\infty}, \bm{V}_{\bm{xx},\infty} )) \in \partial (\mathcal{C}\times\mathcal{I})
\right\} \\
& \leq & 
P\left\{
(A_{\infty},\bm{B}_{\infty}', \phi(\bm{B}_{\infty}, \bm{V}_{\bm{xx},\infty} )) \in \partial\mathcal{C}\times\mathcal{I}
\right\} +
P\left\{
(A_{\infty},\bm{B}_{\infty}', \phi(\bm{B}_{\infty}, \bm{V}_{\bm{xx},\infty} )) \in \mathcal{C}\times\partial\mathcal{I}
\right\} \\
& & + P\left\{
(A_{\infty},\bm{B}_{\infty}', \phi(\bm{B}_{\infty}, \bm{V}_{\bm{xx},\infty} )) \in \partial \mathcal{C}\times\partial\mathcal{I}
\right\}\\
& = & 
P\left\{
(A_{\infty},\bm{B}_{\infty}') \in \partial \mathcal{C}, \phi(\bm{B}_{\infty}, \bm{V}_{\bm{xx},\infty} ) \in \mathcal{I}
\right\}\\
&=&
P\left\{
(A_{\infty},\bm{B}_{\infty}') \in \partial \mathcal{C}\mid \phi(\bm{B}_{\infty}, \bm{V}_{\bm{xx},\infty} ) \in \mathcal{I}
\right\}\cdot
P\left\{
 \phi(\bm{B}_{\infty}, \bm{V}_{\bm{xx},\infty} ) \in \mathcal{I}
\right\} = 0,
\end{eqnarray*}
where the last equality follows from the fact that $\mathcal{C}$ is a continuity set of $(A_{\infty},\bm{B}_{\infty}')\mid \phi(\bm{B}_{\infty}, \bm{V}_{\bm{xx},\infty} )=1$.
Thus, according to (\ref{eq:joint_A_B_phi}) and Portmanteau's Theorem, as $n\rightarrow \infty,$
\begin{eqnarray*}
&&P\left\{
\sqrt{n}(\hat{\tau}_Y-\tau_Y, \hat{\bm{\tau}}_{\bm{X}}') \in \mathcal{C}, \phi(\sqrt{n}\hat{\bm{\tau}}_{\bm{X}},\bm{V}_{\bm{xx}})=1
\right\} 
 =  P\left\{
\left(
\sqrt{n}(\hat{\tau}_Y-\tau_Y), \sqrt{n}\hat{\bm{\tau}}_{\bm{X}}', \phi(\sqrt{n}\hat{\bm{\tau}}_{\bm{X}},\bm{V}_{\bm{xx}})
\right)
 \in \mathcal{C}\times\mathcal{I}
\right\}
\\ 
 &&\rightarrow 
P\left\{
(A_{\infty}, \bm{B}'_{\infty}, \phi(\bm{B}_{\infty}, \bm{V}_{\bm{xx},\infty} )) \in \mathcal{C}\times\mathcal{I}
\right\}
 =  P\left\{
(A_{\infty}, \bm{B}_{\infty}') \in \mathcal{C}, \phi(\bm{B}_{\infty}, \bm{V}_{\bm{xx},\infty} )=1
\right\}.
\end{eqnarray*}
Hence for any continuity set $\mathcal{C}$ of $(A_{\infty},\bm{B}_{\infty}')\mid \phi(\bm{B}_{\infty}, \bm{V}_{\bm{xx},\infty} )=1$, as $n\rightarrow \infty$,
\begin{eqnarray*}
&&P\left\{
\sqrt{n}(\hat{\tau}_Y-\tau_Y, \hat{\bm{\tau}}_{\bm{X}}') \in \mathcal{C} \mid \phi(\sqrt{n}\hat{\bm{\tau}}_{\bm{X}},\bm{V}_{\bm{xx}})=1
\right\}
=  \frac{P\left\{
\sqrt{n}(\hat{\tau}_Y-\tau_Y, \hat{\bm{\tau}}_{\bm{X}}') \in \mathcal{C}, \phi(\sqrt{n}\hat{\bm{\tau}}_{\bm{X}},\bm{V}_{\bm{xx}})=1
\right\}}{P\left\{
\phi(\sqrt{n}\hat{\bm{\tau}}_{\bm{X}},\bm{V}_{\bm{xx}})=1
\right\}}\\
&&\rightarrow  \frac{P\left\{
(A_{\infty}, \bm{B}_{\infty}') \in \mathcal{C}, \phi(\bm{B}_{\infty}, \bm{V}_{\bm{xx},\infty} )=1
\right\}}{P\left\{
\phi(\bm{B}_{\infty}, \bm{V}_{\bm{xx},\infty} )=1
\right\}}
=  P\left\{
(A_{\infty}, \bm{B}_{\infty}') \in \mathcal{C}\mid \phi(\bm{B}_{\infty}, \bm{V}_{\bm{xx},\infty} )=1
\right\}.
\end{eqnarray*}
Therefore, Proposition \ref{pro:asymp_dist_gen} holds.
\end{proof}
Proposition \ref{pro:asymp_dist_gen} implies the following corollary, including Proposition \ref{thm:asymp_dist_maha} as a special case.
\begin{corollary}\label{cor:asymp_approx_gen}
Under ReG,
\begin{align*}
\left.
\begin{pmatrix}
\sqrt{n}(\hat{\tau}_Y-\tau_Y) \\
\sqrt{n}\hat{\bm{\tau}}_{\bm{X}}
\end{pmatrix}\ 
\right| \ \sqrt{n}\hat{\bm{\tau}}_{\bm{X}} \in \mathcal{G} \ 
\apprsim \ 
\left.
\begin{pmatrix}
A\\
\bm{B}
\end{pmatrix}\ 
\right|\ 
 \bm{B} \in \mathcal{G},
\end{align*}
where $(A, \bm{B}')\sim \mathcal{N}(\bm{0}, \bm{V})$.
\end{corollary}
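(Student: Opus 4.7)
}
The plan is to deduce the corollary from Proposition \ref{pro:asymp_dist_gen} by showing that both sides of the $\apprsim$ converge weakly to the same limiting distribution, namely $(A_\infty, \bm{B}_\infty')\mid \bm{B}_\infty\in \mathcal{G}_\infty$ with $(A_\infty, \bm{B}_\infty')\sim\mathcal{N}(\bm{0},\bm{V}_\infty)$. Proposition \ref{pro:asymp_dist_gen} already delivers this for the left-hand side (the randomization-based sequence), so the work concentrates on the right-hand side, whose law depends on $n$ through the finite-population covariance matrix $\bm{V}$.

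First I would record that under Condition \ref{cond:fp}, $\bm{V}\to \bm{V}_\infty$ as $n\to\infty$, so the unconditional Gaussian sequence satisfies $(A,\bm{B}')\sim\mathcal{N}(\bm{0},\bm{V})\overset{d}{\to}(A_\infty,\bm{B}_\infty')\sim\mathcal{N}(\bm{0},\bm{V}_\infty)$. Together with the joint convergence $\bm{V}_{\bm{xx}}\to\bm{V}_{\bm{xx},\infty}$ and the almost-sure continuity of $\phi$, the continuous mapping theorem gives
\begin{align*}
\bigl(A,\bm{B}',\phi(\bm{B},\bm{V}_{\bm{xx}})\bigr) \ \overset{d}{\longrightarrow} \ \bigl(A_\infty,\bm{B}_\infty',\phi(\bm{B}_\infty,\bm{V}_{\bm{xx},\infty})\bigr).
\end{align*}
This is the exact analogue of display \eqref{eq:joint_A_B_phi} in the proof of Proposition \ref{pro:asymp_dist_gen}, only with the random sequence $\sqrt{n}(\hat{\tau}_Y-\tau_Y,\hat{\bm{\tau}}_{\bm{X}}')$ replaced by the Gaussian sequence $(A,\bm{B}')$.

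Next I would replay verbatim the Portmanteau argument used in the proof of Proposition \ref{pro:asymp_dist_gen}. Take the open interval $\mathcal{I}=(0.5,1.5)$, so $\{\phi=1\}=\{\phi\in\mathcal{I}\}$ and $\mathcal{I}$ is a continuity set of the binary limit $\phi(\bm{B}_\infty,\bm{V}_{\bm{xx},\infty})$; condition (3) on $\phi$ guarantees $P(\phi(\bm{B}_\infty,\bm{V}_{\bm{xx},\infty})=1)>0$. For any continuity set $\mathcal{C}$ of the conditional limit $(A_\infty,\bm{B}_\infty')\mid \bm{B}_\infty\in\mathcal{G}_\infty$, the set $\mathcal{C}\times\mathcal{I}$ is a continuity set of the joint limit (the same boundary calculation as in Proposition \ref{pro:asymp_dist_gen} goes through without modification), hence
\begin{align*}
P\{(A,\bm{B}')\in\mathcal{C}\mid \bm{B}\in\mathcal{G}\}\ \longrightarrow\ P\{(A_\infty,\bm{B}_\infty')\in\mathcal{C}\mid \bm{B}_\infty\in\mathcal{G}_\infty\}.
\end{align*}
Combining with Proposition \ref{pro:asymp_dist_gen} yields that the two sequences in the corollary have identical weak limits, which by definition is what $\apprsim$ asserts.

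I expect no deep obstacle; the main care-point is to verify that the boundary-measure bookkeeping used in Proposition \ref{pro:asymp_dist_gen} applies equally to the Gaussian sequence $(A,\bm{B}')$, and that conditions (1)--(3) on $\phi$ continue to furnish both the almost-sure continuity needed by continuous mapping and the nonvanishing denominator $P(\bm{B}_\infty\in\mathcal{G}_\infty)>0$ needed to define the conditional limit. Once these routine checks are in place, the corollary follows by reading off the common weak limit.
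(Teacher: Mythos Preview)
Your proposal is correct and follows essentially the same approach as the paper's own proof: both argue that the Gaussian sequence $(A,\bm{B}')\mid \bm{B}\in\mathcal{G}$ converges weakly to $(A_\infty,\bm{B}_\infty')\mid \bm{B}_\infty\in\mathcal{G}_\infty$ by noting $(A,\bm{B}',\bm{V}_{\bm{xx}})\overset{d}{\to}(A_\infty,\bm{B}_\infty',\bm{V}_{\bm{xx},\infty})$ and then reusing the continuous-mapping plus Portmanteau argument from Proposition~\ref{pro:asymp_dist_gen}. The paper simply says ``the same logic as the proof of Proposition~\ref{pro:asymp_dist_gen} implies\ldots'' whereas you spell out that logic explicitly, but the content is identical.
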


\begin{proof}[Proof of Corollary \ref{cor:asymp_approx_gen}]
Let $(A_{\infty}, \bm{B}_{\infty}')\sim \mathcal{N}(0,\bm{V}_{\infty})$. As $n\rightarrow\infty$, 
$\bm{V} \rightarrow \bm{V}_{\infty}$, and then
$
\left(
A, \bm{B}', \bm{V}_{\bm{xx}}
\right)
\converged
\left(
A_{\infty}, \bm{B}_{\infty}', \bm{V}_{\bm{xx},\infty}
\right).
$
The same logic as the proof of Proposition \ref{pro:asymp_dist_gen} implies
\begin{align*}
\left.
\begin{pmatrix}
A \\
\bm{B}
\end{pmatrix}\ 
\right| \ \phi(\bm{B}, \bm{V}_{\bm{xx}})=1 \ 
\overset{d}{\longrightarrow} \ 
\left.
\begin{pmatrix}
A_{\infty}\\
\bm{B}_{\infty}
\end{pmatrix}\ 
\right|\ 
\phi(\bm{B}_\infty, \bm{V}_{\bm{xx},\infty})=1
\end{align*}
Therefore, according to Proposition \ref{pro:asymp_dist_gen}, Corollary \ref{cor:asymp_approx_gen} holds.
\end{proof}

The following corollary shows the asymptotic distribution of $\hat{\tau}_Y$ under ReG.
\begin{corollary}\label{cor:asym_tau_Y_ReG}
Under ReG, 
\begin{align*}
\sqrt{n}(\hat{\tau}_Y -\tau_Y) \mid \sqrt{n}\hat{\bm{\tau}}_{\bm{X}} \in\mathcal{G} \apprsim  \varepsilon + \bm{V}_{\tau \bm{x}}\bm{V}_{\bm{xx}}^{-1}\bm{B} \mid \bm{B} \in \mathcal{G},
\end{align*}
where $\varepsilon \sim \mathcal{N}(0, V_{\tau\tau}(1-R^2))$ is independent of $\bm{B}\sim \mathcal{N}(\bm{0},  \bm{V}_{\bm{xx}})$.
\end{corollary}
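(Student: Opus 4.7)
The plan is to derive this corollary as a direct consequence of Corollary \ref{cor:asymp_approx_gen}, which already gives the joint weak approximation $(\sqrt{n}(\hat{\tau}_Y - \tau_Y), \sqrt{n}\hat{\bm{\tau}}_{\bm{X}}') \mid \sqrt{n}\hat{\bm{\tau}}_{\bm{X}} \in \mathcal{G} \apprsim (A, \bm{B}') \mid \bm{B} \in \mathcal{G}$ with $(A, \bm{B}')\sim \mathcal{N}(\bm{0}, \bm{V})$. From this, projecting out the first coordinate via the continuous mapping theorem immediately yields $\sqrt{n}(\hat{\tau}_Y - \tau_Y) \mid \sqrt{n}\hat{\bm{\tau}}_{\bm{X}} \in \mathcal{G} \apprsim A \mid \bm{B} \in \mathcal{G}$. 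So the corollary reduces to rewriting the right-hand side in the claimed additive form.

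The next step is to perform the standard Gaussian projection/residual decomposition of $A$ with respect to $\bm{B}$. Write
\begin{equation*}
A = \bm{V}_{\tau\bm{x}}\bm{V}_{\bm{xx}}^{-1}\bm{B} + \varepsilon,
\qquad
\varepsilon \letas A - \bm{V}_{\tau\bm{x}}\bm{V}_{\bm{xx}}^{-1}\bm{B}.
\end{equation*}
Since $(A, \bm{B}')$ is jointly Gaussian, $\varepsilon$ is Gaussian with mean zero, $\Cov(\varepsilon, \bm{B}) = \bm{V}_{\tau\bm{x}} - \bm{V}_{\tau\bm{x}}\bm{V}_{\bm{xx}}^{-1}\bm{V}_{\bm{xx}} = \bm{0}$, and variance $V_{\tau\tau} - \bm{V}_{\tau\bm{x}}\bm{V}_{\bm{xx}}^{-1}\bm{V}_{\bm{x}\tau}$. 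By Proposition \ref{prop:R_sampling_cor} this variance equals $V_{\tau\tau}(1 - R^2)$. Since $\varepsilon$ and $\bm{B}$ are uncorrelated and jointly Gaussian, they are independent.

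Finally, because $\varepsilon \ind \bm{B}$, conditioning on the event $\{\bm{B} \in \mathcal{G}\}$ leaves the marginal distribution of $\varepsilon$ unchanged and preserves its independence from $\bm{B}$. Hence
\begin{equation*}
A \mid \bm{B} \in \mathcal{G} \ \sim \ \varepsilon + \bm{V}_{\tau\bm{x}}\bm{V}_{\bm{xx}}^{-1}\bm{B} \mid \bm{B} \in \mathcal{G},
\end{equation*}
with $\varepsilon \sim \mathcal{N}(0, V_{\tau\tau}(1-R^2))$ independent of $\bm{B}\sim \mathcal{N}(\bm{0}, \bm{V}_{\bm{xx}})$, giving the claimed representation. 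The whole argument is essentially a book-keeping exercise; the only mildly delicate point is invoking the continuous mapping step legitimately at the level of the $\apprsim$ relation, which is already handled by Corollary \ref{cor:asymp_approx_gen} and the fact that marginalization is continuous, so there is no substantive obstacle.
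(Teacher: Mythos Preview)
Your proof is correct and follows essentially the same approach as the paper: invoke Corollary \ref{cor:asymp_approx_gen} to obtain $\sqrt{n}(\hat{\tau}_Y-\tau_Y)\mid \sqrt{n}\hat{\bm{\tau}}_{\bm{X}}\in\mathcal{G}\apprsim A\mid \bm{B}\in\mathcal{G}$, then decompose $A$ into its linear projection $\bm{V}_{\tau\bm{x}}\bm{V}_{\bm{xx}}^{-1}\bm{B}$ plus the Gaussian residual $\varepsilon$ with variance $V_{\tau\tau}(1-R^2)$, which is independent of $\bm{B}$. The paper's proof is terser, but the logic is identical; your added details (the explicit covariance computation, the appeal to Proposition \ref{prop:R_sampling_cor}, and the remark about marginalization) are all fine and do not change the argument.
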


\begin{proof}[Proof of Corollary \ref{cor:asym_tau_Y_ReG}]
The residual from the linear projection of $A$ on $\bm{B}$ is 
$$
\varepsilon =A - \bm{V}_{\tau \bm{x}}\bm{V}_{\bm{xx}}^{-1}\bm{B} \sim \mathcal{N}(0, (1-R^2)V_{\tau\tau}),
$$ 
which is independent of $\bm{B}$. 
According to Corollary \ref{cor:asymp_approx_gen}, 
\begin{align*}
&\sqrt{n}(\hat{\tau}_Y-\tau_Y) \mid  \sqrt{n}\hat{\bm{\tau}}_{\bm{X}} \in \mathcal{G} \ \  \apprsim \ \  A \mid \bm{B} \in \mathcal{G} \ \ 
 \sim \ \ 
\varepsilon + \bm{V}_{\tau \bm{x}}\bm{V}_{\bm{xx}}^{-1}\bm{B} \mid \bm{B} \in \mathcal{G}.
\end{align*}
\end{proof}

The following corollary shows the asymptotic unbiasedness of $\hat{\tau}_Y$ and balance in means of all covariates, which includes 
Corollaries \ref{thm:unbiased_maha} and \ref{cor:unbiased_tier} as special cases.

\begin{corollary}\label{cor:asym_unbiased_proof}
Under ReG,
$
\mathbb{E}_{\text{a}}\left\{
\sqrt{n}(\hat{\tau}_{{Y}}-\tau_{{Y}}) \mid 
 \sqrt{n}\hat{\bm{\tau}}_{\bm{X}} \in \mathcal{G}
\right\} 
=0.
$
\end{corollary}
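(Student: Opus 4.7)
The plan is to read off the conclusion directly from the asymptotic representation in Corollary \ref{cor:asym_tau_Y_ReG} and then use the symmetry hypothesis on $\phi$. Since the notation $\mathbb{E}_{\text{a}}$ refers to the expectation of the asymptotic distribution itself (not the limit of the finite-sample expectations), no uniform integrability argument is needed; the proof reduces to computing the mean of the limiting random variable.

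First, I would invoke Corollary \ref{cor:asym_tau_Y_ReG} to rewrite
\[
\mathbb{E}_{\text{a}}\bigl\{\sqrt{n}(\hat{\tau}_Y - \tau_Y) \mid \sqrt{n}\hat{\bm{\tau}}_{\bm{X}} \in \mathcal{G}\bigr\}
= \mathbb{E}\bigl(\varepsilon + \bm{V}_{\tau\bm{x}}\bm{V}_{\bm{xx}}^{-1}\bm{B} \mid \bm{B} \in \mathcal{G}\bigr),
\]
with $\varepsilon \sim \mathcal{N}(0, V_{\tau\tau}(1-R^2))$ independent of $\bm{B} \sim \mathcal{N}(\bm{0}, \bm{V}_{\bm{xx}})$. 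By linearity I would split this into two pieces. For the first piece, since $\varepsilon$ is independent of $\bm{B}$ and hence independent of the event $\{\bm{B} \in \mathcal{G}\}$, we get $\mathbb{E}(\varepsilon \mid \bm{B} \in \mathcal{G}) = \mathbb{E}(\varepsilon) = 0$.

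For the second piece, the key step uses the symmetry condition $\phi(\bm{\mu}, \bm{V}_{\bm{xx}}) = \phi(-\bm{\mu}, \bm{V}_{\bm{xx}})$ from Section \ref{sec:gen_bal_cri}, which means that $\mathcal{G} = -\mathcal{G}$. Because $\bm{B}$ is a centered Gaussian vector (so $\bm{B} \stackrel{d}{=} -\bm{B}$), the distribution of $\bm{B} \mid \bm{B} \in \mathcal{G}$ equals the distribution of $-\bm{B} \mid -\bm{B} \in \mathcal{G}$, which in turn equals the distribution of $-\bm{B} \mid \bm{B} \in \mathcal{G}$ by the symmetry of $\mathcal{G}$. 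Consequently $\mathbb{E}(\bm{B} \mid \bm{B} \in \mathcal{G}) = \bm{0}$, so $\mathbb{E}(\bm{V}_{\tau\bm{x}}\bm{V}_{\bm{xx}}^{-1}\bm{B} \mid \bm{B} \in \mathcal{G}) = \bm{0}$, and the claimed identity follows.

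There is no real obstacle here; the only subtlety worth flagging is the distinction between the asymptotic mean and the limit of the finite-sample means, and to note that the proof works for the former (which is what $\mathbb{E}_{\text{a}}$ denotes). As a remark at the end, I would observe that exactly the same symmetry argument, applied to an arbitrary covariate viewed as an outcome unaffected by treatment, shows asymptotic balance of covariate means under ReG, which was the other claim made just after Corollary \ref{cor:asym_tau_Y_ReG} in Section \ref{sec:inf_ReG}.
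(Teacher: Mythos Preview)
Your proof is correct and uses essentially the same symmetry idea as the paper. The only cosmetic difference is that the paper works directly with the joint vector via Proposition~\ref{pro:asymp_dist_gen}, using $(A_\infty,\bm{B}_\infty')\sim(-A_\infty,-\bm{B}_\infty')$ together with $\mathcal{G}_\infty=-\mathcal{G}_\infty$ to conclude $\mathbb{E}(A_\infty\mid\bm{B}_\infty\in\mathcal{G}_\infty)=0$ in one step, whereas you first pass through the decomposition of Corollary~\ref{cor:asym_tau_Y_ReG} and then apply the symmetry argument to $\bm{B}$ alone; both routes are equivalent, and your closing remark about covariate balance matches the paper's.
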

\begin{proof}[Proof of Corollary \ref{cor:asym_unbiased_proof}]
According to Proposition \ref{pro:asymp_dist_gen}, $\mathbb{E}_{\text{a}}\left\{
\sqrt{n}(\hat{\tau}_{{Y}}-\tau_{{Y}}) \mid 
 \sqrt{n}\hat{\bm{\tau}}_{\bm{X}} \in \mathcal{G}
\right\} = \mathbb{E}(A_{\infty} \mid \bm{B}_{\infty}\in \mathcal{G}_{\infty})$, where $(A_{\infty}, \bm{B}_{\infty}')\sim \mathcal{N}(0,\bm{V}_{\infty})$. 
Because $\phi$ satisfies $\phi(\bm{\mu}, \bm{V}_{\bm{xx},\infty}) = \phi(-\bm{\mu}, \bm{V}_{\bm{xx},\infty})$,
we know that
 $\bm{B}_{\infty} \in \mathcal{G}_{\infty}$ if and only if $-\bm{B}_{\infty} \in \mathcal{G}_{\infty}$.
Using $(A_{\infty}, \bm{B}_{\infty}')\sim (-A_{\infty}, -\bm{B}_{\infty}')$, we have 
$$
\mathbb{E}(A_{\infty} \mid \bm{B}_{\infty} \in \mathcal{G}_{\infty}) = \mathbb{E}(-A_{\infty}\mid -\bm{B}_{\infty}\in \mathcal{G}_{\infty}) = \mathbb{E}(-A_{\infty}\mid \bm{B}_{\infty}\in \mathcal{G}_{\infty}) = -\mathbb{E}(A_{\infty}\mid \bm{B}_{\infty}\in \mathcal{G}_{\infty}).
$$
Thus, $\mathbb{E}(A_{\infty} \mid \bm{B}_{\infty} \in \mathcal{G}_{\infty})=0$, and $\mathbb{E}_{\text{a}}\left\{
\sqrt{n}(\hat{\tau}_{{Y}}-\tau_{{Y}}) \mid 
 \sqrt{n}\hat{\bm{\tau}}_{\bm{X}} \in \mathcal{G}
\right\} =0$. 
Because covariates are outcomes unaffected by treatment, the difference-in-means of any covariate has asymptotic mean $0$.
\end{proof}

\begin{proof}[{\bf Proof of Proposition \ref{prop:R_sampling_cor}}]
We have
\begin{eqnarray*}
\bm{V}_{\tau \bm{x}}\bm{V}_{\bm{xx}}^{-1}\bm{V}_{\bm{x}\tau}
& = & 
\left(  {r_1^{-1}}\bm{S}_{Y(1),\bm{X}}+{r_0^{-1}}\bm{S}_{Y(0),\bm{X}}\right)\left\{ 
(r_1r_0)^{-1}\bm{S}_{\bm{X}}^2
\right\}^{-1}
\left(
{r_1^{-1}}\bm{S}_{\bm{X}, Y(1)}+{r_0^{-1}}\bm{S}_{\bm{X},Y(0)}
\right)\\
& = & \frac{r_0}{r_1}S_{Y(1)\mid \bm{X}}^2+\frac{r_1}{r_0}S_{Y(0)\mid \bm{X}}^2 +  2\bm{S}_{Y(1), \bm{X}}
\left(
\bm{S}_{\bm{X}}^2
\right)^{-1}\bm{S}_{\bm{X},Y(0)}\\
& = & 
r_{1}^{-1} S_{Y(1)\mid \bm{X}}^2+r_0^{-1}S_{Y(0)\mid \bm{X}}^2 -
\left\{
S_{Y(1)\mid \bm{X}}^2 + S_{Y(0)\mid \bm{X}}^2 -
2\bm{S}_{Y(1), \bm{X}}
\left(
\bm{S}_{\bm{X}}^2
\right)^{-1}\bm{S}_{\bm{X},Y(0)}
\right\}\\
& = & r_{1}^{-1} S_{Y(1)\mid \bm{X}}^2+r_0^{-1}S_{Y(0)\mid \bm{X}}^2 -S_{\tau\mid \bm{X}}^2.
\end{eqnarray*}
The sampling squared multiple correlation between $\hat{\tau}_Y$ and $\hat{\bm{\tau}}_{\bm{X}}$ under CRE has the following equivalent forms: 
\begin{align*}
\text{Corr}(\hat{\tau}_Y, \hat{\bm{\tau}}_{\bm{X}}) & = 
\frac{\bm{V}_{\tau \bm{x}}\bm{V}_{\bm{xx}}^{-1}\bm{V}_{\bm{x}\tau}}{V_{\tau\tau}} = 
\frac{r_1^{-1}S_{Y(1)\mid \bm{X}}^2+r_0^{-1}S_{Y(0)\mid \bm{X}}^2
- S_{\tau\mid \bm{X}}^2
}{{r_1^{-1}}S_{Y(1)}^2 + {r_0^{-1}}S_{Y(0)}^2 - S_{\tau}^2}\\
& = \frac{S^2_{Y(1)}}{r_1 V_{\tau \tau}}R^2(1) + 
\frac{S^2_{Y(0)}}{r_0V_{\tau \tau}}R^2(0) - 
\frac{S^2_{\tau}}{V_{\tau \tau}}R^2(\tau)= R^2.
\end{align*}
Therefore, Proposition \ref{prop:R_sampling_cor} holds.
\end{proof}

\section{Improvements Under Rerandomization}
\label{sec::improvement}

\subsection{Reductions in asymptotic variances}\label{sec:priv_proof}
First we investigate the reduction in asymptotic sampling variances under ReM and ReMT, and then we consider ReG.
We introduce $R_{\infty}^2$ as the limit of $R^2$, and $\rho_{t,{\infty}}^2$ as the limit of $\rho_{t}^2$ ($1\leq t\leq T$). The existences of $R_{\infty}^2$ and $\rho_{t,\infty}^2$ are guaranteed by the convergence of $\bm{V}$.

\begin{proof}[{\bf Proof of Corollary \ref{coro::ReM-variance}}]
Recall that $\bm{B}_{\infty} \sim \mathcal{N}(\bm{0},\bm{V}_{\bm{x}\bm{x},\infty})$.
According to Proposition \ref{pro:asymp_dist_gen} and the results for Gaussian covariates \citep[][Theorem 3.1]{morgan2012rerandomization}, the asymptotic sampling variance of $\sqrt{n}\hat{\bm{\tau}}_{\bm{X}}$ is
\begin{align*}
\Var_{\text{a}}\left(
\sqrt{n}\hat{\bm{\tau}}_{\bm{X}}
\mid \sqrt{n}\hat{\bm{\tau}}_{\bm{X}} \in \mathcal{M}
\right) = \Var(\bm{B}_\infty \mid \bm{B}_{\infty} \in \mathcal{M}_{\infty}) = v_{K,a} \Var(\bm{B}_{\infty}) = v_{K,a} \bm{V}_{\bm{xx},\infty} = \lim_{n\rightarrow\infty}v_{K,a} \bm{V}_{\bm{xx}}.
\end{align*}
Because $\Var_{\text{a}}\left( \sqrt{n}\hat{\bm{\tau}}_{\bm{X}} \right)=\Var(\bm{B}_\infty)=\bm{V}_{\bm{xx},\infty}$, we can deduce
the PRIASV of $\hat{\bm{\tau}}_{\bm{X}}$.

According to Theorem \ref{thm:asymp_dist_Y_maha}, under ReM, the asymptotic sampling variance of $\hat{\tau}_Y$ is
\begin{align*}
&\Var_{\text{a}}\left\{
\sqrt{n}(\hat{\tau}_Y-\tau_Y) \mid  \sqrt{n}\hat{\bm{\tau}}_{\bm{X}} \in \mathcal{G}
\right\}  
=
V_{\tau\tau,\infty}
\left\{
(1-R_{\infty}^2)\Var(\varepsilon_0) + R_{\infty}^2\Var(L_{K,a})
\right\}
= 
V_{\tau\tau,\infty}\left\{
1-R_{\infty}^2 + R_{\infty}^2v_{K,a}
\right\} \\
 =& V_{\tau\tau,\infty}\left\{
1-(1-v_{K,a})R_{\infty}^2
\right\} = \lim_{n\rightarrow\infty}V_{\tau\tau}\left\{
1-(1-v_{K,a})R^2
\right\}.
\end{align*}
Because $\Var_{\text{a}}\left\{
\sqrt{n}(\hat{\tau}_Y-\tau_Y) 
\right\} =V_{\tau\tau,\infty}$, the PRIASV of $\hat{\tau}_Y$ is
\begin{align*}
1 - \frac{\Var_{\text{a}}\left\{
\sqrt{n}(\hat{\tau}_Y-\tau_Y) \mid  \sqrt{n}\hat{\bm{\tau}}_{\bm{X}} \in \mathcal{G}
\right\}}{\Var_{\text{a}}\left\{
\sqrt{n}(\hat{\tau}_Y-\tau_Y) 
\right\} } = (1-v_{K,a})R_{\infty}^2 = \lim_{n\rightarrow\infty}(1-v_{K,a})R^2.
\end{align*}
\end{proof}

\begin{proof}[{\bf Proof of Corollary \ref{coro::ReT-variance}}]
We first derive the asymptotic sampling variance and PRIASV of $\hat{\tau}_Y$, and then derive those of $\hat{\tau}_{\bm{X}}$. 
According to Theorem \ref{thm:asymp_dist_Y_maha_tier}, for ReMT, the asymptotic sampling variance of $\hat{\tau}_Y$ is
\begin{align*}
\Var_{\text{a}}\left\{
\sqrt{n}(\hat{\tau}_Y-\tau_Y) \mid  \sqrt{n}\hat{\bm{\tau}}_{\bm{X}} \in \mathcal{T}
\right\}
 &=  V_{\tau\tau,\infty}\left\{
\rho_{T+1,\infty}^2\Var(\varepsilon_0)+\sum_{t=1}^T \rho_{t,\infty}^2\Var(L_{k_t,a_t})
\right\}\\
& = V_{\tau\tau,\infty}\left\{
\rho_{T+1,\infty}^2 + \sum_{t=1}^T \rho_{t,\infty}^2v_{k_t,a_t}
\right\}\\
&=
V_{\tau\tau,\infty}\left\{
1 - \sum_{t=1}^{T}(1-v_{k_t, a_t})\rho^2_{t,\infty}
\right\} = \lim_{n\rightarrow\infty}V_{\tau\tau}\left\{
1 - \sum_{t=1}^{T}(1-v_{k_t, a_t})\rho^2_{t}
\right\},
\end{align*}
where the last line follows from $\sum_{t=1}^{T+1}\rho_{t,\infty}^2=1$. 
The PRIASV of $\hat{\tau}_Y$ is
\begin{align*}
1 - \frac{\Var_{\text{a}}\left\{
\sqrt{n}(\hat{\tau}_Y-\tau_Y) \mid  \sqrt{n}\hat{\bm{\tau}}_{\bm{X}} \in \mathcal{T}
\right\}}{\Var_{\text{a}}\left\{
\sqrt{n}(\hat{\tau}_Y-\tau_Y) 
\right\}} & = 
\sum_{t=1}^{T}(1-v_{k_t, a_t})\rho^2_{t,\infty} = 
\lim_{n\rightarrow\infty}\sum_{t=1}^{T}(1-v_{k_t, a_t})\rho^2_{t}.
\end{align*}

Let $X[t_j]$ be the $j$th covariate in tier $t$, and $R^2_{\overline{l}, t_j}$ be the finite population squared multiple correlation between ${X}_i[t_j]$ and $\bm{X}_i[\overline{l}]$ for $1\leq l \leq \Tau$, with $R^2_{\overline{0}, t_j}=0$. The PRIASV for the outcome implies that the PRIASV of $\hat{\tau}_{X[t_j]}$ is
\begin{align*}
\lim_{n\rightarrow\infty}\sum_{l=1}^{T}(1-v_{k_l, a_l})\left(
R^2_{\overline{l}, t_j} - R^2_{\overline{l-1}, t_j}
\right) = 
\lim_{n\rightarrow\infty}
\left\{ R^2_{\overline{T}, t_j} - \sum_{l=1}^{T}v_{k_l, a_l}\left(
R^2_{\overline{l}, t_j} - R^2_{\overline{l-1}, t_j}
\right) \right\}.
\end{align*}
Because $R^2_{\overline{l}, t_j}=1$ for $l\geq t$, we can further simplify the PRIASV of $\hat{\tau}_{X[t_j]}$ as
\begin{align*}
\lim_{n\rightarrow\infty}\left\{
1 - \sum_{l=1}^{t}v_{k_l, a_l}\left(
R^2_{\overline{l}, t_j} - R^2_{\overline{l-1}, t_j}
\right) 
\right\}.
\end{align*}
To derive the asymptotic sampling variance of $\hat{\bm{\tau}}_{\bm{X}}$, we use the notation introduced in the proof of Theorem \ref{thm:asymp_dist_Y_maha_tier}.
Let $\bm{\Gamma}_{\infty}$ be the limit of the linear transformation matrix $\bm{\Gamma}$, and $\bm{G}_{\infty}=\bm{\Gamma}_{\infty}\bm{B}_{\infty}=(\bm{G}_{1,\infty}', \bm{G}_{2,\infty}', \ldots, \bm{G}_{T,\infty}')'$ be the block-wise Gram--Schmidt orthogonalization of $\bm{B}_{\infty}$, where $\bm{G}_{t,\infty}$ is a $k_t$ dimensional random vector. 
According to Proposition \ref{pro:asymp_dist_gen} and the fact that $(\bm{G}_{1,\infty}, \bm{G}_{2,\infty}, \ldots, \bm{G}_{T,\infty})$ are mutually independent, the asymptotic sampling variance of $\hat{\bm{\tau}}_{\bm{X}}$ is
\begin{align*}
\Var_{\text{a}}\left(
\sqrt{n}\hat{\bm{\tau}}_{\bm{X}}
\mid \sqrt{n}\hat{\bm{\tau}}_{\bm{X}} \in \mathcal{T}
\right) & = \Var\left(
\bm{B}_{\infty}
\mid \bm{B}_{\infty}\in \mathcal{T}_{\infty}
\right) = \Var\left(
\bm{\Gamma}_{\infty}^{-1}\bm{G}_{\infty}
\mid \bm{G}_{t,\infty}'\Var(\bm{G}_{t,\infty})^{-1}\bm{G}_{t,\infty}\leq a_t, 1\leq t\leq T
\right)\\
& = \bm{\Gamma}_{\infty}^{-1}
\text{diag}\left\{
v_{k_1,a_1}\Var(\bm{G}_{1,\infty}), \ldots, v_{k_T,a_T}\Var(\bm{G}_{T,\infty})
\right\}
\left(\bm{\Gamma}_{\infty}'\right)^{-1}\\
& = \lim_{n\rightarrow\infty} \bm{\Gamma}^{-1}
\text{diag}\left\{
v_{k_1,a_1}(r_1r_0)^{-1}\bm{S}^2_{\bm{E}[1]}, \ldots, v_{k_T,a_T}(r_1r_0)^{-1}\bm{S}^2_{\bm{E}[T]}
\right\}
\left(\bm{\Gamma}'\right)^{-1}\\
& = \lim_{n\rightarrow\infty} \frac{n^2}{n_1n_0} \bm{\Gamma}^{-1}
\text{diag}\left(
v_{k_1,a_1}\bm{S}^2_{\bm{E}[1]}, \ldots, v_{k_T,a_T}\bm{S}^2_{\bm{E}[T]}
\right)
\left(\bm{\Gamma}'\right)^{-1}.
\end{align*}
\end{proof}

According to Proposition \ref{pro:asymp_dist_gen}, for ReG,
\begin{align*}
\Var_{\text{a}}\left(
\sqrt{n}\hat{\bm{\tau}}_{\bm{X}} \mid  \sqrt{n}\hat{\bm{\tau}}_{\bm{X}} \in \mathcal{T}
\right) & = \Var(\bm{B}_{\infty}\mid \bm{B}_{\infty}\in \mathcal{T})
 \equiv 
\bm{V}_{\bm{xx},\phi,\infty}=\lim_{n\rightarrow\infty}\bm{V}_{\bm{xx},\phi},
\\
\Var_{\text{a}}\left\{
\sqrt{n}(\hat{\tau}_Y-\tau_Y) \mid  \sqrt{n}\hat{\bm{\tau}}_{\bm{X}} \in \mathcal{T}
\right\} & = \Var(A_{\infty} \mid \bm{B}_{\infty} \in \mathcal{T}_{\infty}) \\
& = \Var\left(A_{\infty}-\bm{V}_{\tau \bm{x},\infty}\bm{V}_{\bm{xx},\infty}^{-1}B_{\infty} + \bm{V}_{\tau \bm{x},\infty}\bm{V}_{\bm{xx},\infty}^{-1}\bm{B}_{\infty} \mid \bm{B}_{\infty}\in \mathcal{T}_{\infty}\right)\\
& = \Var(A_{\infty}-\bm{V}_{\tau \bm{x},\infty}\bm{V}_{\bm{xx},\infty}^{-1}\bm{B}_{\infty}) + \Var\left( 
\bm{V}_{\tau \bm{x},\infty}\bm{V}_{\bm{xx},\infty}^{-1}\bm{B}_{\infty} \mid \bm{B}_{\infty}\in \mathcal{T}_{\infty}
\right)\\
& = V_{\tau\tau,\infty}(1-R_{\infty}^2) + \bm{V}_{\tau \bm{x},\infty}\bm{V}_{\bm{xx},\infty}^{-1}\bm{V}_{\bm{xx},\phi,\infty}\bm{V}_{\bm{xx},\infty}^{-1}\bm{V}_{\bm{x}\tau,\infty}\\
& =  \lim_{n\rightarrow \infty} \left\{{V}_{\tau\tau}(1-R^2) + \bm{V}_{\tau \bm{x}}\bm{V}_{\bm{xx}}^{-1} 
\bm{V}_{\bm{xx},\phi}
\bm{V}_{\bm{xx}}^{-1}\bm{V}_{\bm{x}\tau}\right\}. 
\end{align*}
We can then immediately check whether ReM reduces the sampling covariance matrix of the difference-in-means of the covariates.

\subsection{Reductions in quantile ranges in ReM and ReMT}\label{sec:quant_reduc_proof}

To prove Theorem \ref{thm:shorter_ci_maha}, we need the following two lemmas. 
\begin{lemma}\label{lemma:order_rho_mono}
Let $\varepsilon_0, \eta\sim \mathcal{N}(0,1)$ be independent. For any $a> 0$ and $c\geq 0$, 
\begin{align*}
P\left(
\sqrt{1-\rho^2} \cdot\varepsilon_0 + \rho \eta \geq c \mid \eta^2 \leq a
\right)
\end{align*}
is a decreasing function of $\rho$ for $\rho \in [0,1]$. 
\end{lemma}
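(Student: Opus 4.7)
The plan is to fix $f(\rho) := P(\sqrt{1-\rho^2}\,\varepsilon_0 + \rho \eta \geq c \mid \eta^2 \leq a)$, condition on $\eta$ to reduce the problem to a one-dimensional integral, and then differentiate in $\rho$. By exploiting the bivariate Gaussian density factorization, I expect the derivative to collapse to a closed-form expression whose sign is transparent.

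First, for $\rho\in[0,1)$, conditioning on $\eta=y$ gives $\sqrt{1-\rho^2}\,\varepsilon_0+\rho y\sim\mathcal{N}(\rho y, 1-\rho^2)$, so
\begin{equation*}
f(\rho)=\frac{1}{P(|\eta|\le\sqrt{a})}\int_{-\sqrt{a}}^{\sqrt{a}} \Phi\!\left(\frac{\rho y-c}{\sqrt{1-\rho^2}}\right)\phi(y)\,dy,
\end{equation*}
where $\phi,\Phi$ are the standard Gaussian density and CDF. Differentiating under the integral sign and computing $\partial_\rho\{(\rho y-c)/\sqrt{1-\rho^2}\}=(y-\rho c)/(1-\rho^2)^{3/2}$, the next step is to rewrite the joint weight $\phi(y)\,\phi\!\left((\rho y-c)/\sqrt{1-\rho^2}\right)$. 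A direct algebraic completion of the square (the familiar bivariate normal identity at correlation $\rho$) yields
\begin{equation*}
\phi(y)\,\phi\!\left(\frac{\rho y-c}{\sqrt{1-\rho^2}}\right)=\frac{e^{-c^2/2}}{2\pi}\exp\!\left(-\frac{(y-\rho c)^2}{2(1-\rho^2)}\right),
\end{equation*}
which reduces $f'(\rho)$ to a scalar multiple of $\int_{-\sqrt{a}}^{\sqrt{a}}(y-\rho c)\exp(-(y-\rho c)^2/(2(1-\rho^2)))\,dy$.

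Next, I would evaluate this integral explicitly by substituting $u=y-\rho c$: the antiderivative of $u\exp(-u^2/(2(1-\rho^2)))$ is $-(1-\rho^2)\exp(-u^2/(2(1-\rho^2)))$, so the bounds contribute a difference of two exponentials. Assembling everything gives
\begin{equation*}
f'(\rho)=-\frac{e^{-c^2/2}}{2\pi P(|\eta|\le\sqrt{a})\sqrt{1-\rho^2}}\Bigl[e^{-(\sqrt{a}-\rho c)^2/(2(1-\rho^2))}-e^{-(\sqrt{a}+\rho c)^2/(2(1-\rho^2))}\Bigr].
\end{equation*}
Since $c\ge 0$ and $\rho\ge 0$, we have $(\sqrt{a}-\rho c)^2\le(\sqrt{a}+\rho c)^2$, so the bracket is nonnegative and $f'(\rho)\le 0$ on $[0,1)$. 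Monotonicity extends to $\rho=1$ by continuity: $\sqrt{1-\rho^2}\,\varepsilon_0\to 0$ almost surely as $\rho\to 1^-$, so dominated convergence gives $f(\rho)\to P(\eta\ge c\mid\eta^2\le a)=f(1)$.

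There is no real obstacle here beyond careful bookkeeping; the lemma is structurally just the monotonicity of a truncated bivariate-Gaussian tail probability in the correlation parameter. The one spot worth checking is that the density factorization and the substitution really do produce an odd-in-$u$ integrand whose integral over a symmetric interval shifted by $\rho c$ has a definite sign — this is the crux, and the calculation above confirms it.
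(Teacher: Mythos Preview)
Your proof is correct and follows essentially the same approach as the paper: express the probability as a one-dimensional Gaussian integral by conditioning, differentiate in $\rho$, use the identity $\phi(y)\phi((\rho y-c)/\sqrt{1-\rho^2})=(2\pi)^{-1}e^{-c^2/2}\exp\{-(y-\rho c)^2/(2(1-\rho^2))\}$, and reduce to an integral of $u\,e^{-u^2/(2(1-\rho^2))}$ over a shifted symmetric interval whose sign is immediate. The only cosmetic differences are that the paper conditions on $\varepsilon_0$ rather than $\eta$ and stops at the signed integral, whereas you condition on $\eta$, evaluate the antiderivative to obtain a closed form for $f'(\rho)$, and explicitly handle the endpoint $\rho=1$ by continuity.
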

\begin{proof}[Proof of Lemma \ref{lemma:order_rho_mono}]
For any $a> 0$, let $F(\cdot)$ and  $f(\cdot)$ denote the cumulative distribution and probability density of $\mathcal{N}(0,1)$, and let $G(\cdot)$ and $g(\cdot)$ denote the cumulative distribution and probability density of $\eta \mid \eta^2 \leq a$. We have
\begin{align*}
 P\left(
\sqrt{1-\rho^2}\cdot \varepsilon_0 + \rho \eta \geq c \mid \eta^2 \leq a
\right)
& =  \int_{-\infty}^{\infty}P\left( \eta \geq \frac{c - \sqrt{1-\rho^2} \cdot x}{\rho} \mid \eta^2 \leq a\right)\text{d}F(x)\\
& =  \int_{-\infty}^{\infty}
\left\{
1 - G\left( \frac{c - \sqrt{1-\rho^2} \cdot x}{\rho} \right)
\right\}
\text{d}F(x).
\end{align*}
Taking the partial derivative with respect to $\rho$, we have
\begin{eqnarray*}
& & \frac{\partial}{\partial \rho}
P\left(
\sqrt{1-\rho^2} \cdot \varepsilon_0 + \rho \eta \geq c \mid \eta^2 \leq a
\right)
=  \int_{-\infty}^{\infty}
-g\left( \frac{ \sqrt{1-\rho^2}\cdot x-c}{\rho} \right)\frac{x\frac{1}{\sqrt{1-\rho^2}}-c}{\rho^2}
\text{d}F(x)\\
& = &  \int_{-\infty}^{\infty}
-g\left( t \right)\frac{t+c\rho}{\rho(1-\rho^2)} \text{d}F\left(\frac{t\rho+c}{\sqrt{1-\rho^2}}\right)
 =  \int_{-\infty}^{\infty}
-g\left( t \right)\frac{t+c\rho}{\rho(1-\rho^2)}f\left(\frac{t\rho+c}{\sqrt{1-\rho^2}}\right) \frac{\rho}{\sqrt{1-\rho^2}} \text{d}t\\
& = & - \left(1-\rho^2\right)^{-3/2} \int_{-\infty}^{\infty}
g(t)f\left(\frac{t\rho+c}{\sqrt{1-\rho^2}}\right)(t+c\rho)\text{d}t.
\end{eqnarray*}
The integral part in the above formula is
\begin{eqnarray*}
&& \int_{-\infty}^{\infty}
g(t)f\left(\frac{t\rho+c}{\sqrt{1-\rho^2}}\right)(t+c\rho)\text{d}t 
 =  \frac{1}{P(\eta^2 \leq a)} \int_{-\sqrt{a}}^{\sqrt{a}}f(t)f\left(\frac{t\rho+c}{\sqrt{1-\rho^2}}\right) (t+c\rho) \text{d}t\\
& = &
\frac{1}{2\pi P(\eta^2\leq a)}
 \int_{-\sqrt{a}}^{\sqrt{a}}
\exp\left\{ -\frac{t^2}{2} - \frac{(t\rho+c)^2}{2(1-\rho^2)}\right\}
(t+c\rho)
 \text{d}t
 = 
\frac{e^{-c^2/2}}{2\pi P(\eta^2\leq a)} \int_{-\sqrt{a}}^{\sqrt{a}}
\exp\left\{ -\frac{(t+c\rho)^2}{2(1-\rho^2)} \right\}
(t+c\rho)
 \text{d}t\\
& = &
\frac{e^{-c^2/2}}{2\pi P(\eta^2\leq a)} \int_{-\sqrt{a}+c\rho}^{\sqrt{a}+c\rho}
\exp\left\{ -\frac{u^2}{2(1-\rho^2)} \right\}
u
 \text{d}u\geq 0.
\end{eqnarray*}
Therefore, 
$
\partial 
P\left(
\sqrt{1-\rho^2} \cdot \varepsilon_0 + \rho \eta \geq c \mid \eta^2 \leq a
\right) / \partial \rho 
\leq 0.
$
\end{proof}

\begin{lemma}\label{lemma:order_rho_K}
Let $\varepsilon_0 \sim \mathcal{N}(0,1)$, $L_{K,a}\sim D_1\mid \bm{D}'\bm{D}\leq a$, where $\bm{D}=(D_1,\ldots,D_K)' \sim \mathcal{N}(\bm{0}, \bm{I}_K)$, and $(\varepsilon_0, L_{K,a})$ are mutually independent.
Then, for any $a> 0$ and $c\geq 0$, 
\begin{align*}
P\left(\sqrt{1-\rho^2}\cdot\varepsilon_0 + \rho L_{K,a} \geq c \right)
\end{align*}
is a decreasing function of $\rho$ for $\rho \in [0,1]$.
\end{lemma}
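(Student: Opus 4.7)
\textbf{Proof plan for Lemma \ref{lemma:order_rho_K}.}
The plan is to reduce the $K$-dimensional statement to the one-dimensional Lemma \ref{lemma:order_rho_mono} by conditioning on the ``remaining'' coordinates of $\bm{D}$. Write $W = \sum_{k=2}^K D_k^2$, so $W \sim \chi^2_{K-1}$ and $W$ is independent of $D_1$. The event $\bm{D}'\bm{D}\leq a$ becomes $D_1^2 + W \leq a$, which couples $D_1$ to $W$ only through the truncation.

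A short Bayes calculation shows that, under the conditioning $\bm{D}'\bm{D}\leq a$, the marginal law of $W$ has density proportional to $P(D_1^2 \leq a-w)\,f_W(w)$ on $[0,a]$, and the conditional law of $L_{K,a}=D_1\mid \bm{D}'\bm{D}\leq a$ given $W=w$ is exactly that of $D_1\mid D_1^2 \leq a-w$, a truncated standard normal. Since $\varepsilon_0$ is independent of $\bm{D}$, it is also independent of $(D_1,W)$ jointly, so for $\rho\in[0,1]$ I obtain the mixture representation
\begin{equation*}
P\!\left(\sqrt{1-\rho^2}\,\varepsilon_0 + \rho L_{K,a}\geq c\right)
=\int_0^a P\!\left(\sqrt{1-\rho^2}\,\varepsilon_0 + \rho D_1 \geq c \,\big|\, D_1^2\leq a-w\right) dF(w),
\end{equation*}
where $F$ is the conditional distribution of $W$ given $\bm{D}'\bm{D}\leq a$ (which does not depend on $\rho$).

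For each fixed $w\in[0,a)$, the integrand is of the form treated by Lemma \ref{lemma:order_rho_mono} with truncation level $a'=a-w>0$ (the $w=a$ boundary carries zero $F$-mass). Lemma \ref{lemma:order_rho_mono} then says each integrand is a decreasing function of $\rho$ on $[0,1]$; integrating a family of decreasing functions against the fixed measure $dF$ preserves monotonicity, which yields the claim.

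The only genuine care needed is in justifying the conditional-law identifications (independence of $D_1$ and $W$, and independence of $\varepsilon_0$ from $(D_1,W)$), and in checking measurability/integrability so that Fubini applies. These are routine since all involved densities are continuous on the compact truncation region and the integrand is bounded by $1$. No new analytical estimates are required beyond those already established in Lemma \ref{lemma:order_rho_mono}; the main step is simply recognizing $L_{K,a}$, conditional on the squared norm of its orthogonal complement, as the one-dimensional truncated normal of Lemma \ref{lemma:order_rho_mono}.
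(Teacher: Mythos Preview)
Your proposal is correct and follows essentially the same approach as the paper's proof: both condition on the ``remaining'' part of $\bm{D}$ to reduce to the one-dimensional truncated-normal case of Lemma \ref{lemma:order_rho_mono}, then integrate out. The only cosmetic difference is that the paper conditions on the full vector $(D_2,\ldots,D_K)$ whereas you condition on the sufficient statistic $W=\sum_{k=2}^K D_k^2$; since the conditional law of $D_1$ given $\bm{D}'\bm{D}\leq a$ and $(D_2,\ldots,D_K)$ depends on the latter only through $W$, the two are equivalent.
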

{\colpf
\begin{proof}[Proof of Lemma \ref{lemma:order_rho_K}]
The independence of $\varepsilon_0$ and $\bm{D}$ implies
$$
P\left(\sqrt{1-\rho^2} \cdot\varepsilon_0 + L_{K,a} \geq c \right) 
= 
P\left(\sqrt{1-\rho^2}\cdot \varepsilon_0 + \rho D_1 \geq c \mid \bm{D}'\bm{D} \leq a  \right).
$$
Assume $0\leq \rho_1 \leq \rho_2 \leq 1$, and $(d_2, \ldots, d_K)$ satisfies $\sum_{k=2}^K d_k^2<a$.
Conditioning on $(D_2,\ldots,D_K)=(d_2, \ldots, d_K)$, 
Lemma \ref{lemma:order_rho_mono} implies
\begin{align*}
& P\left(\sqrt{1-\rho_1^2}\cdot \varepsilon_0 + \rho_1 D_1 \geq c \mid D_1^2 \leq a-\sum_{k=2}^K D_k^2, D_2=d_2, \ldots,D_{K}=d_K  \right)\\
\geq & P\left(\sqrt{1-\rho_2^2}\cdot\varepsilon_0 + \rho_2 D_1 \geq c \mid D_1^2 \leq a-\sum_{k=2}^K D_k^2, D_2=d_2, \ldots,D_{K}=d_K  \right).
\end{align*}
Taking expection for both sides, we have
\begin{align*}
P\left(\sqrt{1-\rho_1^2}\cdot \varepsilon_0 + \rho_1 D_1 \geq c \mid \bm{D}'\bm{D} \leq a  \right) \geq
P\left(\sqrt{1-\rho_2^2}\cdot \varepsilon_0 + \rho_2 D_1 \geq c \mid \bm{D}'\bm{D} \leq a  \right).
\end{align*}
Therefore, Lemma \ref{lemma:order_rho_K} holds.
\end{proof}
}

\begin{proof}[{\bf Proof of Theorem \ref{thm:shorter_ci_maha}}]
According to Theorem \ref{thm:asymp_dist_Y_maha}, the lengths of $(1-\alpha)$ quantile ranges of the asymptotic distributions of $\sqrt{n}(\hat{\tau}_Y-\tau_Y)$ under ReM and CRE are $2\nu_{1-\alpha/2}(R_{\infty}^2)\sqrt{V_{\tau\tau,\infty}}$ and $2z_{1-\alpha/2}\sqrt{V_{\tau\tau,\infty}}$, respectively.
According to the definition of $\nu_{1-\alpha/2}(R_{\infty}^2)$ and Lemma \ref{lemma:order_rho_K}, 
we know that
$\nu_{1-\alpha/2}(R_{\infty}^2)$ is a decreasing function of $R_{\infty}^2$. 
\end{proof}
To prove Theorem \ref{thm:qr_reduct_maha_tier}, we need the following four lemmas.
We first define a random variable to be SUM  if it is symmetric and unimodal around zero.

\begin{lemma}\label{lemma:order_sum}
Let $\zeta_0,\zeta_1$ and $\zeta_2$ be three jointly  independent random variables. If 
\begin{itemize}
\item[(1)] $\zeta_0$ is continuous and SUM, or $\zeta_0=0$; 
\item[(2)] $\zeta_1$ and $\zeta_2$ are symmetric around 0;
\item[(3)] $P(\zeta_1\geq c) \leq P(\zeta_2\geq c)$ for any $c > 0$, 
\end{itemize}
then $P(\zeta_0+\zeta_1\geq c) \leq P(\zeta_0+\zeta_2\geq c)$ for any $c > 0$.
\end{lemma}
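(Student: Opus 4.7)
The plan is to reduce the comparison to a stochastic-dominance argument for $|\zeta_1|$ versus $|\zeta_2|$. The degenerate case $\zeta_0 = 0$ is immediate from condition~(3), so I assume $\zeta_0$ has a symmetric unimodal density $f_0$ with CDF $F_0$. Because each $\zeta_i$ is symmetric around $0$, condition~(3) gives, for every $c > 0$,
\begin{equation*}
P(|\zeta_i| \geq c) \;=\; 2\,P(\zeta_i \geq c), \qquad i = 1,2,
\end{equation*}
so $|\zeta_2|$ stochastically dominates $|\zeta_1|$.

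Conditioning on $\zeta_i$ and using the continuity of $\zeta_0$,
\begin{equation*}
P(\zeta_0 + \zeta_i \geq c) \;=\; 1 - \mathbb{E}\bigl[F_0(c - \zeta_i)\bigr].
\end{equation*}
Writing $\zeta_i = S_i\,|\zeta_i|$ with an independent random sign $S_i$ and taking the conditional expectation given $|\zeta_i|$,
\begin{equation*}
\mathbb{E}\bigl[F_0(c - \zeta_i)\bigr] \;=\; \mathbb{E}\bigl[\Phi(|\zeta_i|)\bigr], \qquad \Phi(y) \;\equiv\; \tfrac{1}{2}\bigl[F_0(c - y) + F_0(c + y)\bigr].
\end{equation*}
Thus $P(\zeta_0 + \zeta_1 \geq c) \leq P(\zeta_0 + \zeta_2 \geq c)$ is equivalent to $\mathbb{E}[\Phi(|\zeta_1|)] \geq \mathbb{E}[\Phi(|\zeta_2|)]$, which will follow from the stochastic dominance once I show that $\Phi$ is non-increasing on $[0,\infty)$.

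Monotonicity of $\Phi$ is where the symmetric unimodality of $\zeta_0$ enters. For $0 \leq y_1 \leq y_2$,
\begin{equation*}
2\bigl[\Phi(y_1) - \Phi(y_2)\bigr] \;=\; \int_{y_1}^{y_2} \bigl[f_0(c - u) - f_0(c + u)\bigr]\,du \;\geq\; 0,
\end{equation*}
because for $c > 0$ and $u \geq 0$ one has $|c - u| \leq c + u = |c + u|$, and since $f_0$ is non-increasing in $|\cdot|$ this yields $f_0(c - u) \geq f_0(c + u)$.

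The main obstacle is purely bookkeeping: the symmetrization step that rewrites $\mathbb{E}[F_0(c - \zeta_i)]$ as $\mathbb{E}[\Phi(|\zeta_i|)]$, and making sure the unimodality comparison $|c - u| \leq |c + u|$ is invoked only over the valid range $c > 0, u \geq 0$ (exactly where we need it). Once these details are lined up, the reduction to stochastic dominance is routine.
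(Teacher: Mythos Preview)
Your proof is correct and follows essentially the same route as the paper's: both reduce the comparison to the monotonicity of the map $t \mapsto F_0(c-t)+F_0(c+t)$ on $[0,\infty)$ (equivalently, $t \mapsto P(t-c\leq \zeta_0\leq t+c)$, which is $2\Phi(t)-1$ by the symmetry of $\zeta_0$) and then invoke the tail ordering of $\zeta_1,\zeta_2$. Your packaging via stochastic dominance of $|\zeta_i|$ and the even function $\Phi$ is a bit more streamlined than the paper's change-of-variables computation, but the key idea and the place where unimodality enters are identical.
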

{\colpf
\begin{proof}[Proof for Lemma \ref{lemma:order_sum}]
Note that when $\zeta_0=0$, Lemma \ref{lemma:order_sum} holds automatically. We consider only the case where $\zeta_0$ is continuous and SUM. 
Let $F_{\zeta_0}(\cdot)$ be the cumulative distribution function of ${\zeta_0}$. For any $c> 0$,
\begin{eqnarray*}
& & P(\zeta_0+\zeta_1 \geq c) =  \int_{-\infty}^{\infty} P(\zeta_1 \geq c-x)\text{d} F_{\zeta_0}(x)\\
& = & \int_{-\infty}^{c} P(\zeta_1 \geq c-x)\text{d}F_{\zeta_0}(x) + \int_{c}^{\infty} P(\zeta_1 \geq c-x)\text{d}F_{\zeta_0}(x)\\
& = & \int_{\infty}^{0} P(\zeta_1 \geq t)\text{d}F_{\zeta_0}(c-t)+ \int_{0}^{\infty} P(\zeta_1 \geq -t)\text{d}F_{\zeta_0}(c+t)\\
& = & \int_{0}^{\infty} P(\zeta_1 \geq t)\text{d}\left\{ -F_{\zeta_0}(c-t) \right\}+ \int_{0}^{\infty} P(\zeta_1 \leq t)\text{d}F_{\zeta_0}(c+t)\\
& = & \int_{0}^{\infty} P(\zeta_1 \geq t)\text{d}\left\{ F_{\zeta_0}(t-c)-1 \right\}+ \int_{0}^{\infty} \left\{ 1-P(\zeta_1 \geq t)
\right\}
\text{d}F_{\zeta_0}(c+t)\\
& = & \int_{0}^{\infty}
\text{d}F_{\zeta_0}(c+t) + \int_{0}^{\infty} P(\zeta_1 \geq t)\text{d}\left\{F_{\zeta_0}(t-c) - F_{\zeta_0}(c+t) \right\}\\
& = & \int_{0}^{\infty}
\text{d}F_{\zeta_0}(c+t) + \int_{0}^{\infty} P(\zeta_1 \geq t)\text{d}\left\{ -P(t-c \leq {\zeta_0}\leq t+c)\right\}.
\end{eqnarray*}
Similarly, 
\begin{align*}
P({\zeta_0}+\zeta_2\geq c) 
= & \int_{0}^{\infty}
\text{d}F_{\zeta_0}(c+t) + \int_{0}^{\infty} P(\zeta_2\geq t)\text{d}\left\{ -P(t-c \leq {\zeta_0}\leq t+c)\right\}.
\end{align*}
Because ${\zeta_0}$ is SUM and continuous, 
$-P(t-c \leq {\zeta_0}\leq t+c)$ is a continuous increasing function of $t$ when $t\geq 0$. Because $P(\zeta_1 \geq t)\leq P(\zeta_2 \geq t)$ for any $t> 0$, we have that for all $c > 0$,
\begin{eqnarray*}
&& P({\zeta_0}+\zeta_1 \geq c) 
=  \int_{0}^{\infty}
\text{d}F_{\zeta_0}(c+t) + \int_{0}^{\infty} P(\zeta_1 \geq t)\text{d}\left\{ -P(t-c \leq {\zeta_0}\leq t+c)\right\}\\
& \leq & \int_{0}^{\infty}
\text{d}F_{\zeta_0}(c+t) + \int_{0}^{\infty} P(\zeta_2 \geq t)\text{d}\left\{ -P(t-c \leq {\zeta_0}\leq t+c)\right\}
 =  P({\zeta_0}+\zeta_2 \geq c).
\end{eqnarray*}
\end{proof}
}

\begin{lemma}\label{lemma:SDA_sum}
[Wintner 1936]
If $\zeta_1$ and $\zeta_2$ are SUM and independent, then $\zeta_1+\zeta_2$ is also SUM.
\end{lemma}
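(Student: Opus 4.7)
The plan is to split SUM into its two constituent properties, symmetry around zero and unimodality, and handle them in turn. Symmetry of $\zeta_1+\zeta_2$ is immediate, so the substance of the argument lies entirely in establishing unimodality.

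For symmetry, since $\zeta_i\stackrel{d}{=}-\zeta_i$ for $i=1,2$ by assumption and $\zeta_1,\zeta_2$ are independent, we have $-(\zeta_1+\zeta_2)=(-\zeta_1)+(-\zeta_2)\stackrel{d}{=}\zeta_1+\zeta_2$. For unimodality, I would invoke Khintchine's representation theorem: a random variable is SUM if and only if it has the same distribution as $UV$ where $U\sim\mathrm{Uniform}[-1,1]$ is independent of some random variable $V$. Applying this to each summand, write $\zeta_i\stackrel{d}{=}U_iV_i$ with $U_1,U_2,V_1,V_2$ mutually independent and each $U_i$ uniform on $[-1,1]$.

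The key step is then to condition on $(V_1,V_2)=(v_1,v_2)$: the conditional distribution of $\zeta_1+\zeta_2$ is the convolution of two uniform laws centered at zero, which is trapezoidal (triangular when $|v_1|=|v_2|$), symmetric about zero, and nonincreasing in $|x|$, hence SUM. Averaging over $(V_1,V_2)$, the unconditional distribution of $\zeta_1+\zeta_2$ is a mixture of SUM laws, all with mode at zero. Such a mixture is itself symmetric about zero, and nonincreasingness in $|x|$ is preserved under mixtures, so $\zeta_1+\zeta_2$ is SUM.

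The main obstacle is Khintchine's representation theorem itself, which is nontrivial but classical, so I would cite it rather than reprove it. A secondary subtlety is that $\zeta_i$ need not have a density (it could, for instance, place mass at zero), but Khintchine's representation holds in full generality, and the mixture argument transfers directly to distribution functions: unimodality around zero is equivalent to the distribution function being concave on $[0,\infty)$ and convex on $(-\infty,0]$, a property preserved by any mixture of distributions sharing the same mode.
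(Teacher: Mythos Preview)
The paper does not give a proof of this lemma; it simply records it as a classical result due to \citet{wintner1936class} and invokes it where needed. Your sketch via Khintchine's representation is correct and is in fact the standard route to Wintner's theorem: represent each SUM variable as $U_iV_i$ with $U_i$ uniform on $[-1,1]$, observe that conditionally on $(V_1,V_2)$ the sum is a trapezoidal law symmetric and unimodal at zero, and then note that a mixture (over $(V_1,V_2)$) of SUM laws with common mode is again SUM. The only point worth tightening is the statement of Khintchine's representation for symmetric unimodal laws---one usually takes $V_i\ge 0$ so that the conditional law of $U_iV_i$ is genuinely uniform on $[-v_i,v_i]$---but this is cosmetic and does not affect the argument.
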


\begin{lemma}\label{lemma:order_rho_mono_tier}
Let $\varepsilon_0, \eta_1,\eta_2,\ldots,\eta_T$ be $(\Tau+1)$ i.i.d $\mathcal{N}(0,1)$. 
Let $\{\rho_t\}_{t=1}^{T+1}$ and $\{\tilde{\rho}_t\}_{t=1}^{T+1}$ be
 two nonnegative constant sequences  satisfying $\sum_{t=1}^{T+1} \rho_t^2=\sum_{t=1}^{T+1} {\tilde{\rho}_t}^2= 1.$ If there exists $1\leq t_0\leq T$ such that
$$
\rho_{t_0} \geq \tilde{\rho}_{t_0}, \quad   \rho_t=\tilde{\rho}_{t} \ (t\neq t_0, 1\leq t\leq T),  \quad \rho_{T+1}\leq \tilde{\rho}_{T+1},
$$
then for any $c\geq 0$ and $a_t>0$ ($1\leq t\leq T$),
\begin{align*}
& P\left(
\rho_{T+1}\varepsilon_0 + \sum_{t=1}^\Tau \rho_t \eta_t \geq c \mid  \eta_t^2 \leq a_t, 1\leq t\leq \Tau
\right)\leq
P\left(
\tilde{\rho}_{T+1}\varepsilon_0 + \sum_{t=1}^\Tau \tilde{\rho}_t \eta_t \geq c \mid  \eta_t^2 \leq a_t, 1\leq t\leq \Tau
\right).
\end{align*}
\end{lemma}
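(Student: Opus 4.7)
My plan is to isolate the single coordinate $t_0$ on which $\rho$ and $\tilde\rho$ differ, and then combine Lemma A1 (to handle the $(\rho_{T+1},\rho_{t_0})$ pair) with Lemma A4 (to absorb the common piece, which is symmetric and unimodal by Lemma A5).

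First, I would observe that $\rho_t=\tilde\rho_t$ for every $1\leq t\leq T$ with $t\neq t_0$, together with $\sum_t\rho_t^2=\sum_t\tilde\rho_t^2=1$, forces $\rho_{T+1}^2+\rho_{t_0}^2=\tilde\rho_{T+1}^2+\tilde\rho_{t_0}^2$. Call this common value $r^2$. If $r=0$ the claim is trivial, so assume $r>0$ and write $\rho_{t_0}=r\alpha$, $\rho_{T+1}=r\sqrt{1-\alpha^2}$, $\tilde\rho_{t_0}=r\tilde\alpha$, $\tilde\rho_{T+1}=r\sqrt{1-\tilde\alpha^2}$, with $\alpha\geq\tilde\alpha$ in $[0,1]$. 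Because the $\eta_t$'s are mutually independent and also independent of $\varepsilon_0$, the conditioning event $\bigcap_{t=1}^{T}\{\eta_t^2\leq a_t\}$ is a product of independent events, so conditional on it the variables $\eta_1,\ldots,\eta_T,\varepsilon_0$ remain mutually independent, with each $\eta_t$ replaced by its truncated-normal law.

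Second, I would define, conditional on the acceptance event,
\[
\zeta_0=\sum_{\substack{1\leq t\leq T\\ t\neq t_0}}\rho_t\eta_t,\qquad
\zeta_1=\rho_{T+1}\varepsilon_0+\rho_{t_0}\eta_{t_0},\qquad
\zeta_2=\tilde\rho_{T+1}\varepsilon_0+\tilde\rho_{t_0}\eta_{t_0},
\]
so that the two probabilities to compare are $P(\zeta_0+\zeta_1\geq c)$ and $P(\zeta_0+\zeta_2\geq c)$, and $\zeta_0$ is independent of $(\zeta_1,\zeta_2)$. Each conditional $\eta_t$ has a symmetric, unimodal density on $[-\sqrt{a_t},\sqrt{a_t}]$ (standard truncated normal), so $\rho_t\eta_t$ is SUM for every $t\neq t_0$ with $\rho_t\neq 0$; by repeated application of Wintner's theorem (Lemma A5) the independent sum $\zeta_0$ is SUM (or identically $0$ if every such $\rho_t$ vanishes). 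Both $\zeta_1$ and $\zeta_2$ are symmetric about $0$ because they are linear combinations of independent symmetric variables.

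Third, to verify the tail ordering $P(\zeta_1\geq c)\leq P(\zeta_2\geq c)$ for all $c\geq 0$, I would rewrite
\[
P(\zeta_1\geq c\mid \eta_{t_0}^2\leq a_{t_0})
=P\!\left(\sqrt{1-\alpha^2}\,\varepsilon_0+\alpha\,\eta_{t_0}\geq c/r\mid \eta_{t_0}^2\leq a_{t_0}\right),
\]
and similarly for $\zeta_2$ with $\tilde\alpha$ in place of $\alpha$. Since $c/r\geq 0$ and $\alpha\geq\tilde\alpha$, Lemma A1 gives the inequality directly.

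Finally, having (i) $\zeta_0$ continuous SUM (or $0$) and independent of $(\zeta_1,\zeta_2)$, (ii) $\zeta_1,\zeta_2$ symmetric around $0$, and (iii) the tail ordering just established, Lemma A4 immediately yields $P(\zeta_0+\zeta_1\geq c)\leq P(\zeta_0+\zeta_2\geq c)$ for every $c\geq 0$, which is the claim. The only place requiring care is the SUM verification for $\zeta_0$; this is routine given Lemma A5, but one must note that each truncated normal $\eta_t\mid\eta_t^2\leq a_t$ is itself SUM before invoking the Wintner closure property, and handle separately the degenerate case when all non-$t_0$ coefficients vanish.
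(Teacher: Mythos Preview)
Your proposal is correct and follows essentially the same route as the paper's proof: isolate the single coordinate $t_0$, apply Lemma~A1 to the pair $(\rho_{T+1}\varepsilon_0,\rho_{t_0}\eta_{t_0})$ after normalizing by $r$, verify that the remaining truncated-normal sum is SUM via Wintner's theorem (Lemma~A5), and then invoke Lemma~A4 to transfer the tail ordering through that SUM piece. Your write-up is in fact slightly more careful than the paper's in making explicit the $r,\alpha$ parametrization, the product structure of the conditioning, and the degenerate case $\zeta_0=0$.
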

\begin{proof}[Proof of Lemma \ref{lemma:order_rho_mono_tier}]
Without loss of generality, we assume $t_0=1$. Then $\rho_1\geq \tilde{\rho}_1$ and $\rho_1^2+\rho_{T+1}^2=\tilde{\rho}_1^2+\tilde{\rho}_{T+1}^2$.
According to Lemma \ref{lemma:order_rho_mono}, for any $c\geq 0$, 
\begin{align*}
P\left( 
\frac{\rho_{T+1}}{\sqrt{\rho_1^2+\rho_{T+1}^2}}
\varepsilon_0 + \frac{\rho_1}{\sqrt{\rho_1^2+\rho_{T+1}^2}}\eta_1 \geq c\mid \eta_1^2 \leq a_1
\right)\leq P\left( 
\frac{\tilde{\rho}_{T+1}}{\sqrt{\tilde{\rho}_1^2+\tilde{\rho}_{T+1}^2}}
\varepsilon_0 + \frac{\tilde{\rho}_1}{\sqrt{\tilde{\rho}_1^2+\rho_{T+1}^2}}\tilde{\rho}_1 \geq c\mid \eta_1^2 \leq a_1
\right)
\end{align*}
which implies that, for any $c\geq 0$, 
\begin{align*}
& P\left(
\rho_{T+1}\varepsilon_0 +\rho_1 \eta_1 \geq c \mid  \eta_1^2 \leq a_1
\right)\leq P\left(
\tilde{\rho}_{T+1}\varepsilon_0 +\tilde{\rho}_1 \eta_1 \geq c \mid  \eta_1^2 \leq a_1
\right).
\end{align*}
According to Proposition  \ref{prop::curious} and Lemma \ref{lemma:SDA_sum},
$
\sum_{t=2}^\Tau \rho_t 
 \left( \eta_t  \mid  \eta_t^2 \leq a_t\right)
$
is SUM. Lemma \ref{lemma:order_sum} implies that for any $c> 0$,
\begin{align*}
& P\left(
\rho_{T+1}\varepsilon_0 + \sum_{t=1}^\Tau \rho_t \eta_t \geq c \mid  \eta_t^2 \leq a_t, 1\leq t\leq \Tau
\right)\leq
P\left(
\tilde{\rho}_{T+1}\varepsilon_0 + \sum_{t=1}^\Tau \tilde{\rho}_t \eta_t \geq c \mid  \eta_t^2 \leq a_t, 1\leq t\leq \Tau
\right).
\end{align*}
Therefore, Lemma \ref{lemma:order_rho_mono_tier} holds.
\end{proof}

\begin{lemma}\label{lemma:order_rho_mono_tier_mult}
Let $\varepsilon_0 \sim \mathcal{N}(0,1)$, $L_{k_t,a_t}\sim D_{t1}\mid \bm{D}_t'\bm{D}_t\leq a_t$, where $\bm{D}_t=(D_{t1},\ldots,D_{tk_t})\sim \mathcal{N}(\bm{0},\bm{I}_{k_t})$, and $(\varepsilon_0, L_{k_1,a_1}, L_{k_2,a_2}, \ldots, L_{k_T,a_T})$ are mutually independent.
Let $\{\rho_t\}_{t=1}^{T+1}$ and $\{\tilde{\rho}_t\}_{t=1}^{T+1}$ be
two nonnegative constant sequences  satisfying $\sum_{t=1}^{T+1} \rho_t^2=\sum_{t=1}^{T+1} {\tilde{\rho}_t}^2= 1.$ If there exists $1\leq t_0\leq T$ such that
$$
\rho_{t_0} \geq \tilde{\rho}_{t_0}, \quad  \rho_t=\tilde{\rho}_{t} \ (t\neq t_0, 1\leq t\leq T),  \quad 
\rho_{T+1}\leq \tilde{\rho}_{T+1},
$$
then for any $c\geq 0$ and $a_t>0$ ($1\leq t\leq T$),
\begin{align*}
& P\left(
\rho_{T+1}\varepsilon_0 + \sum_{t=1}^\Tau \rho_t L_{k_t,a_t}\geq c
\right)\leq
 P\left(
\tilde{\rho}_{T+1}\varepsilon_0 + \sum_{t=1}^\Tau \tilde{\rho}_t L_{k_t,a_t}\geq c
\right).
\end{align*}
\end{lemma}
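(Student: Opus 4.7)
The plan is to reduce Lemma~\ref{lemma:order_rho_mono_tier_mult} to the truncated-Gaussian version, Lemma~\ref{lemma:order_rho_mono_tier}, by conditioning on the ``extra'' coordinates inside each block. Writing $L_{k_t,a_t} \sim D_{t1} \mid \bm{D}_t'\bm{D}_t \leq a_t$, the key observation is that for each $t$, given the ancillary coordinates $(D_{t,2},\ldots,D_{t,k_t})$, the event $\bm{D}_t'\bm{D}_t \leq a_t$ is equivalent to $D_{t,1}^2 \leq B_t$, where $B_t := a_t - \sum_{j=2}^{k_t} D_{t,j}^2$. Because $D_{t,1}$ is a standard Gaussian independent of $(D_{t,2},\ldots,D_{t,k_t})$, the conditional law of $L_{k_t,a_t}$ given $B_t$ is exactly $\mathcal{N}(0,1)$ truncated to $[-\sqrt{B_t},\sqrt{B_t}]$. (When $k_t=1$ there is nothing to condition on and $B_t \equiv a_t$.) Conditioning further on the event $\bm{D}_t'\bm{D}_t\le a_t$ forces $\sum_{j\geq 2} D_{t,j}^2 \leq a_t$, so $B_t > 0$ almost surely.

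First I would formalize this representation. Since the blocks $\bm{D}_1,\ldots,\bm{D}_T$ are mutually independent and $\varepsilon_0$ is independent of all of them, conditional on $(B_1,\ldots,B_T)$ the variables $L_{k_1,a_1},\ldots,L_{k_T,a_T}$ become mutually independent standard Gaussians truncated to $[-\sqrt{B_t},\sqrt{B_t}]$, while $\varepsilon_0 \sim \mathcal{N}(0,1)$ remains independent of them. This is precisely the setting of Lemma~\ref{lemma:order_rho_mono_tier}, with the deterministic thresholds $a_t$ there replaced by the (realized, positive) random thresholds $B_t$.

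Next I would apply Lemma~\ref{lemma:order_rho_mono_tier} pointwise in $(B_1,\ldots,B_T)$, which is legitimate because that lemma holds for every choice of positive thresholds and the hypotheses on $(\rho_t,\tilde{\rho}_t)$ do not involve the thresholds. This gives, for every $c \geq 0$,
\begin{align*}
& P\!\left(\rho_{T+1}\varepsilon_0 + \sum_{t=1}^T \rho_t L_{k_t,a_t} \geq c \,\Big|\, B_1,\ldots,B_T\right) \\
& \qquad \leq P\!\left(\tilde{\rho}_{T+1}\varepsilon_0 + \sum_{t=1}^T \tilde{\rho}_t L_{k_t,a_t} \geq c \,\Big|\, B_1,\ldots,B_T\right).
\end{align*}
Taking expectations over $(B_1,\ldots,B_T)$ preserves the inequality and delivers the conclusion.

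The only real obstacle is the routine verification that conditioning on $\{\bm{D}_t'\bm{D}_t \leq a_t\}$ and then on $(D_{t,2},\ldots,D_{t,k_t})$ yields the claimed truncated-Gaussian law for $D_{t,1}$, and that the conditional distributions remain independent across $t$ given $(B_1,\ldots,B_T)$; both follow from the product form of the joint Gaussian density and Fubini. Everything else is a direct invocation of the already established Lemma~\ref{lemma:order_rho_mono_tier}.
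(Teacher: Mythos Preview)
Your approach is correct and is essentially identical to the paper's proof: both condition on the extra coordinates $(D_{t,2},\ldots,D_{t,k_t})$ in each block (you do this through the sufficient statistic $B_t=a_t-\sum_{j\geq 2}D_{t,j}^2$, the paper conditions on the coordinates directly), apply Lemma~\ref{lemma:order_rho_mono_tier} pointwise with the realized positive thresholds, and then integrate out. The only cosmetic difference is the level of the conditioning $\sigma$-algebra, which is immaterial since $D_{t,1}$ is independent of the remaining coordinates.
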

{\colpf
\begin{proof}[Proof of Lemma \ref{lemma:order_rho_mono_tier_mult}]
Let $\bm{D}_t=(D_{t1},\ldots, D_{tk_t})'\sim \mathcal{N}(\bm{0},\bm{I}_{k_t})$, $1\leq t\leq T$, and $(\bm{D}_1, \ldots, \bm{D}_T, \varepsilon_0)$ be mutually independent.
Conditioning on $D_{tj}=d_{tj}$ $(1\leq t\leq T,j\geq 2)$, according to Lemma \ref{lemma:order_rho_mono_tier},
\begin{eqnarray*}
& & P\left(
\rho_{T+1}\varepsilon_0 + \sum_{t=1}^\Tau \rho_t D_{t1} \geq c \mid  D_{t1}^2 \leq a_t-\sum_{i=2}^{k_t}D_{ti}^2, D_{tj} = d_{tj}, 1\leq t\leq \Tau, j\geq 2
\right)\\
&\leq & P\left(
\tilde{\rho}_{T+1}\varepsilon_0 + \sum_{t=1}^\Tau \tilde{\rho}_t D_{t1} \geq c \mid   D_{t1}^2 \leq a_t-\sum_{i=2}^{k_t}D_{ti}^2, D_{tj} = d_{tj}, 1\leq t\leq \Tau, j\geq 2
\right).
\end{eqnarray*}
Taking expectations of both sides, we have
\begin{eqnarray*}
& & P\left(
\rho_{T+1}\varepsilon_0 + \sum_{t=1}^\Tau \rho_t D_{t1} \geq c \mid  \bm{D}_t'\bm{D}_t \leq a_t, 1\leq t\leq \Tau
\right)\\
& \leq & P\left(
\tilde{\rho}_{T+1}\varepsilon_0 + \sum_{t=1}^\Tau \tilde{\rho}_t D_{t1} \geq c \mid   \bm{D}_t'\bm{D}_t  \leq a_t, 1\leq t\leq \Tau
\right).
\end{eqnarray*}
Therefore, Lemma \ref{lemma:order_rho_mono_tier_mult} holds.
\end{proof}
}

\begin{proof}[{\bf Proof of Theorem \ref{thm:qr_reduct_maha_tier}}]
According to Theorem \ref{thm:asymp_dist_Y_maha_tier}, the lengths of $(1-\alpha)$ quantile ranges of the asymptotic distributions of $\sqrt{n}(\hat{\tau}_Y-\tau_Y)$ under ReMT and CRE are $2\nu_{1-\alpha/2}(\rho_{1,\infty}^2, \rho_{2,\infty}^2, \ldots, \rho_{\Tau,\infty}^2)\sqrt{V_{\tau\tau,\infty}}$ and $2z_{1-\alpha/2}\sqrt{V_{\tau\tau,\infty}}$, respectively.
According to the definition of $\nu_{1-\alpha/2}(\rho_{1,\infty}^2, \rho_{2,\infty}^2, \ldots, \rho_{\Tau,\infty}^2)$ and Lemma \ref{lemma:order_rho_mono_tier_mult}, $\nu_{1-\alpha/2}(\rho_{1,\infty}^2, \rho_{2,\infty}^2, \ldots, \rho_{\Tau,\infty}^2)$ is a decreasing function of $\rho_{t,\infty}^2$,  for any $1\leq t\leq T$. Therefore, Theorem \ref{thm:qr_reduct_maha_tier} holds.
\end{proof}

\section{Conservativeness in Inference}\label{sec::inference}

\subsection{Conservativeness of the sampling variance estimators}\label{sec:conser_var_proof}

The following lemma, which does not require more moment conditions than Condition \ref{cond:fp}, is useful for obtaining asymptotically conservative estimators for the sampling variances and sampling distributions.

\begin{lemma}\label{lemma:cov_AB}
Under ReG, 
if Condition \ref{cond:fp} holds, then for any $(A_i,B_i)$ equal to $(Y_i(1), Y_i(1)), (Y_i(0), Y_i(0)),$ $(Y_i(1), X_{ki}), (Y_i(0), X_{ki})$ or $(X_{ki}, X_{li})$, we have
\begin{align*}
s_{AB}(z)-S_{AB} = o_p(1), \quad (z=0,1)
\end{align*}
$s_{AB}(z)$ is the sample covariance between the $A_i$'s and $B_i$'s under treatment arm $z$, and $S_{AB}$ is the finite population covariance between the $A_i$'s and $B_i$'s. 
\end{lemma}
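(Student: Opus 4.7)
The plan is to first establish the analogous consistency under CRE, then transfer it to ReG by conditioning on an event of positive limiting probability. For each listed pair $(A_i, B_i)$, the estimator $s_{AB}(z)$ is a standard without-replacement sample moment, and classical finite-population identities (see, e.g., the moment calculations in \citet{Neyman:1923} and \citet{fcltxlpd2016}) give $\mathbb{E}_{\text{CRE}}[s_{AB}(z)] = S_{AB}$ exactly, together with $\Var_{\text{CRE}}[s_{AB}(z)] = O(n^{-1})$ expressed in terms of the finite-population fourth moments of $A$ and $B$. Chebyshev's inequality then yields $s_{AB}(z) - S_{AB} = o_p(1)$ under CRE, provided those fourth moments are controlled.

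The fourth-moment control comes directly from Condition \ref{cond:fp}. The elementary inequality
\begin{align*}
\frac{1}{n}\sum_{i=1}^n (A_i-\bar{A})^4 \leq \left\{\max_{1\leq i\leq n} (A_i - \bar{A})^2\right\} \cdot \frac{1}{n}\sum_{i=1}^n (A_i-\bar{A})^2,
\end{align*}
combined with Condition \ref{cond:fp}(ii)--(iii), shows that the normalized fourth-moment average vanishes at the rate needed to render $\Var_{\text{CRE}}[s_{AB}(z)]$ negligible. Cross moments between $A$ and $B$ are then handled analogously by Cauchy--Schwarz.

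To extend to ReG, I invoke the fact that the ReG distribution coincides with the CRE distribution conditioned on $\sqrt{n}\hat{\bm{\tau}}_{\bm{X}} \in \mathcal{G}$. The proof of Proposition \ref{pro:asymp_dist_gen} already establishes that $P_{\text{CRE}}(\sqrt{n}\hat{\bm{\tau}}_{\bm{X}} \in \mathcal{G}) \to P(\bm{B}_\infty \in \mathcal{G}_\infty) > 0$, so this acceptance probability is eventually bounded below by some $p_0 > 0$. Hence for any $\varepsilon > 0$,
\begin{align*}
P_{\text{ReG}}\left(|s_{AB}(z) - S_{AB}| > \varepsilon\right)
\leq \frac{P_{\text{CRE}}\left(|s_{AB}(z) - S_{AB}| > \varepsilon\right)}{P_{\text{CRE}}(\sqrt{n}\hat{\bm{\tau}}_{\bm{X}} \in \mathcal{G})} \longrightarrow 0,
\end{align*}
which yields the lemma.

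The main obstacle is obtaining a clean $O(n^{-1})$ bound on $\Var_{\text{CRE}}[s_{AB}(z)]$ using only Condition \ref{cond:fp}, without any additional moment hypotheses. The bookkeeping for the covariances between the sample-mean terms and the squared or cross-product terms inside $s_{AB}(z)$ is tedious but routine once the product bound above is in hand; the conditioning step itself, once the CRE result is established, is a one-line consequence of positive limiting acceptance probability.
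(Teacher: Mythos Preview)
Your proposal is correct and follows essentially the same route as the paper's proof: both arguments use unbiasedness of $s_{AB}(z)$ under CRE, bound $\Var_{\text{CRE}}\{s_{AB}(z)\}$ via the product inequality that pulls out $\max_i (A_i-\bar{A})^2$ (the paper applies it directly inside an explicit two-term decomposition of $s_{AB}(z)$, you phrase it as fourth-moment control), and then transfer to ReG by dividing by the acceptance probability, which Proposition~\ref{pro:asymp_dist_gen} shows has a positive limit. The only cosmetic difference is that the paper bounds the conditional mean-squared error and invokes Markov, whereas you bound the conditional tail probability directly; these are equivalent here.
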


\begin{proof}[Proof of Lemma \ref{lemma:cov_AB}]
The key is to bound the variance of $s_{AB}(z)$ under ReG. 
According to the law of total expectation, 
\begin{align*}
\mathbb{E}\left[
\left\{
s_{AB}(z) - S_{AB}
\right\}^2
\right] & = 
P\left(\sqrt{n}\hat{\bm{\tau}}_{\bm{X}} \in \mathcal{G}\right) \cdot
\mathbb{E}\left[
\left\{
s_{AB}(z) - S_{AB}
\right\}^2 \mid \sqrt{n}\hat{\bm{\tau}}_{\bm{X}} \in \mathcal{G}
\right] \\
& \quad \ + 
P\left(\sqrt{n}\hat{\bm{\tau}}_{\bm{X}} \notin \mathcal{G} \right) \cdot
\mathbb{E}\left[
\left\{
s_{AB}(z) - S_{AB}
\right\}^2 \mid \sqrt{n}\hat{\bm{\tau}}_{\bm{X}} \notin \mathcal{G} 
\right]\\
& \geq 
P\left(\sqrt{n}\hat{\bm{\tau}}_{\bm{X}} \in \mathcal{G}\right)  \cdot
\mathbb{E}\left[
\left\{
s_{AB}(z) - S_{AB}
\right\}^2 \mid \sqrt{n}\hat{\bm{\tau}}_{\bm{X}} \in \mathcal{G}
\right].
\end{align*}
Therefore, 
\begin{align}\label{eq:bound_mse_sABq_rerand}
\mathbb{E}\left[
\left\{
s_{AB}(z) - S_{AB}
\right\}^2 \mid  \sqrt{n}\hat{\bm{\tau}}_{\bm{X}} \in \mathcal{G}
\right] & \leq P\left( \sqrt{n}\hat{\bm{\tau}}_{\bm{X}} \in \mathcal{G} \right)^{-1}\mathbb{E}\left[
\left\{
s_{AB}(z) - S_{AB}
\right\}^2
\right] \nonumber \\
& = 
 P\left(  \sqrt{n}\hat{\bm{\tau}}_{\bm{X}} \in \mathcal{G} \right)^{-1} \Var\left\{s_{AB}(z)\right\},
\end{align}
where \eqref{eq:bound_mse_sABq_rerand} follows from the fact that $s_{AB}(z)$ is unbiased for $S_{AB}$ under CRE \citep{cochran1977} 
because, under CRE, units receiving treatment arm $z$ is a simple random sample of size $n_z$. 

Let $\bar{A}^\obs(z) = n_z^{-1}\sum_{i:Z_i=z} A_i$ and $\bar{B}^\obs(z) = n_z^{-1}\sum_{i:Z_i=z} B_i$ be the averages of $A_i$'s and $B_i$'s under treatment arm $z$. We first decompose the sample covariance $s_{AB}(z)$ as 
\begin{align*}
s_{AB}(z) & = \frac{n_z}{n_{z}-1} 
\left\{ \frac{1}{n_z}\sum_{i:Z_i=z} (A_i-\bar{A})(B_i - \bar{B}) - \left( \bar{A}^\obs(z)-\bar{A}\right) \left(\bar{B}^\obs(z)-\bar{B}\right) \right\},
\end{align*}
and then obtain an upper bound of its variance under CRE
\begin{align}\label{eq:bound_sABq}
\Var\{s_{AB}(z)\} & =  
\frac{n_z^2}{(n_{z}-1)^2} \Var\left\{\frac{1}{n_z}\sum_{i:Z_i=z} (A_i-\bar{A})(B_i - \bar{B}) - \left( \bar{A}^\obs(z)-\bar{A}\right) \left(\bar{B}^\obs(z)-\bar{B}\right) \right\}
\nonumber
\\
& \leq 
\frac{2n_z^2}{(n_{z}-1)^2}
\left[ \Var\left\{
\frac{1}{n_{z}}\sum_{i:Z_i=z} (A_i-\bar{A})(B_i - \bar{B})
\right\} + 
\Var\left\{
\left( \bar{A}^\obs(z)-\bar{A}\right) \left(\bar{B}^\obs(z)-\bar{B}\right)
\right\}
\right],
\end{align}
which follows from the Cauchy-Schwarz inequality. 
Below we consider the two terms in \eqref{eq:bound_sABq} separately. 
The first term in \eqref{eq:bound_sABq} is bounded by:  
\begin{eqnarray} 
& & \Var\left\{
\frac{1}{n_{z}}\sum_{i:Z_i=z} (A_i-\bar{A})(B_i - \bar{B})
\right\}  \nonumber  \\
& = &
\left(
\frac{1}{n_z} - \frac{1}{n}
\right)
\frac{1}{n-1} \sum_{i=1}^{n}
\left\{
(A_i-\bar{A})(B_i - \bar{B}) - \frac{1}{n} \sum_{j=1}^{n}(A_j-\bar{A})(B_j - \bar{B})
\right\}^2
\nonumber  \\
& \leq & \frac{1}{n_z}\frac{1}{n-1} \sum_{i=1}^{n}(A_i-\bar{A})^2(B_i - \bar{B})^2  \nonumber \\
& \leq &  \frac{n}{n_z} \cdot \frac{1}{n} \max_{1\leq j\leq n} (A_j-\bar{A})^2 \cdot \frac{1}{n-1}\sum_{i=1}^{n}(B_i - \bar{B})^2.  \label{eq::bound1}
\end{eqnarray} 
Because $(\bar{A}^\obs(z)-\bar{A})^2 \leq \max_{1\leq j\leq n} (A_j-\bar{A})^2$, the second term in \eqref{eq:bound_sABq} is bounded by:  
\begin{eqnarray}
& & \Var\left\{
\left( \bar{A}^\obs(z)-\bar{A}\right) \left(\bar{B}^\obs(z)-\bar{B}\right)
\right\}  \nonumber\\
& \leq &  
\mathbb{E}\left\{
\left( \bar{A}^\obs(z)-\bar{A}\right)^2 \left(\bar{B}^\obs(z)-\bar{B}\right)^2
\right\}  \nonumber  \\
& \leq &  \max_{1\leq j\leq n}(A_j-\bar{A})^2 \cdot
\mathbb{E}\left[ \{\bar{B}^\obs(z)-\bar{B}\}^2
\right]   \nonumber \\
& = &
\max_{1\leq j\leq n}(A_j-\bar{A})^2 \cdot
\Var\left\{ \bar{B}^\obs(z) \right\}   \nonumber   \\
& = &  \max_{1\leq j\leq n}(A_j-\bar{A})^2 \cdot
\left(\frac{1}{n_z}-\frac{1}{n} \right) \frac{1}{n-1}\sum_{i=1}^{n}(B_i - \bar{B})^2   \nonumber  \\
& \leq & \frac{n}{n_z} \cdot \frac{1}{n} \max_{1\leq j\leq n} (A_j-\bar{A})^2 \cdot \frac{1}{n-1}\sum_{i=1}^{n}(B_i - \bar{B})^2. \label{eq::bound2}
\end{eqnarray}
Therefore, according to \eqref{eq:bound_sABq}--\eqref{eq::bound2}, we can bound $\Var\{s_{AB}(z)\}$ by: 
\begin{align*}
\Var\{s_{AB}(z)\} & \leq \frac{4n_z^2}{(n_{z}-1)^2} \cdot \frac{n}{n_z} \cdot \frac{1}{n} \max_{1\leq j\leq n} (A_j-\bar{A})^2 \cdot \frac{1}{n-1}\sum_{i=1}^{n}(B_i - \bar{B})^2, 
\end{align*}
which converges to zero under Condition \ref{cond:fp}. 
Recall that $P( \sqrt{n}\hat{\bm{\tau}}_{\bm{X}} \in \mathcal{G})$ converges to the asymptotic acceptance probability $p_a>0$. Then, according to \eqref{eq:bound_mse_sABq_rerand}, $\mathbb{E}[
\{
s_{AB}(z) - S_{AB}
\}^2 \mid  \sqrt{n}\hat{\bm{\tau}}_{\bm{X}} \in \mathcal{G}
] = o(1)$. 
By the Markov inequality, under ReG and Condition \ref{cond:fp}, 
\begin{align*}
s_{AB}(z) - S_{AB} = O_p\left(\sqrt{\mathbb{E}\left[
\left\{
s_{AB}(z) - S_{AB}
\right\}^2 \mid  \sqrt{n}\hat{\bm{\tau}}_{\bm{X}} \in \mathcal{G}
\right]} \right) = o_p(1). 
\end{align*}
Therefore, Lemma \ref{lemma:cov_AB} holds. 
\end{proof}

\begin{lemma}\label{lemma:unbiased_sample_var}
Under ReG, if Condition \ref{cond:fp} holds, then
\begin{align*}
s_{Y(z)}^2-S_{Y(z)}^2 = o_p(1), \ \ 
\bm{s}_{Y(z),\bm{X}}-\bm{S}_{Y(z),\bm{X}} = o_p(1),\ \ 
\bm{s}_{\bm{X},z}^2-\bm{S}_{\bm{X}}^2 = o_p(1), \quad (z=0,1)
\end{align*}
where $\bm{s}_{\bm{X},z}^2$ is the sample variance of $\bm{X}$ under treatment arm $z$.
\end{lemma}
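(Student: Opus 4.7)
The plan is to obtain Lemma \ref{lemma:unbiased_sample_var} as an immediate componentwise consequence of Lemma \ref{lemma:cov_AB}, whose hypothesis set was engineered to cover exactly the pairs $(A_i,B_i)$ that arise here. Since $K$ is fixed, a vector (respectively matrix) is $o_p(1)$ if and only if each of its (at most $K$ or $K^2$) entries is $o_p(1)$, so it suffices to verify the scalar convergence entry by entry.

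For the first claim, $s_{Y(z)}^2 - S_{Y(z)}^2 = o_p(1)$, apply Lemma \ref{lemma:cov_AB} with $(A_i,B_i) = (Y_i(z), Y_i(z))$ for $z=0,1$, which is one of the listed admissible pairs. For the second claim, write $\bm{s}_{Y(z),\bm{X}} - \bm{S}_{Y(z),\bm{X}}$ as the $K$-vector whose $k$th entry is $s_{Y(z) X_k}(z) - S_{Y(z) X_k}$; applying Lemma \ref{lemma:cov_AB} with $(A_i,B_i) = (Y_i(z), X_{ki})$ for each $k = 1,\ldots,K$ shows every entry is $o_p(1)$, hence the vector itself is. For the third claim, write $\bm{s}_{\bm{X},z}^2 - \bm{S}_{\bm{X}}^2$ as the $K\times K$ matrix whose $(k,l)$ entry is $s_{X_k X_l}(z) - S_{X_k X_l}$, and apply Lemma \ref{lemma:cov_AB} to each pair $(A_i,B_i) = (X_{ki}, X_{li})$ for $1\le k,l\le K$.

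There is essentially no obstacle at this step; the real work was done in Lemma \ref{lemma:cov_AB}, where the variance bound $\Var\{s_{AB}(z)\} = o(1)$ under CRE was upgraded to an $o_p(1)$ statement under ReG by paying only a factor of $P(\sqrt{n}\hat{\bm{\tau}}_{\bm{X}} \in \mathcal{G})^{-1}$, which has positive limit $p_a^{-1}$. The only thing to note is that the listed admissible pairs in Lemma \ref{lemma:cov_AB} include $(X_{ki}, X_{li})$, so covariate-covariate sample covariances are also covered — this is what makes $\bm{s}_{\bm{X},z}^2$ (as opposed to just $\bm{S}_{\bm{X}}^2$, which is a known finite-population quantity in our setting) estimable from the in-sample data under either arm. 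No further moment conditions beyond Condition \ref{cond:fp} are needed, consistent with the earlier remark in Section \ref{sec:var_ci_maha}.
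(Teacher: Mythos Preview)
Your proposal is correct and follows exactly the paper's approach: the paper's proof is the single sentence ``Lemma \ref{lemma:unbiased_sample_var} follows directly from Lemma \ref{lemma:cov_AB},'' and you have simply spelled out the componentwise application of Lemma \ref{lemma:cov_AB} that this sentence encapsulates.
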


\begin{proof}[Proof of Lemma \ref{lemma:unbiased_sample_var}]
Lemma \ref{lemma:unbiased_sample_var} follows directly from Lemma \ref{lemma:cov_AB}.
\end{proof}

Let 
$\tilde{V}_{\tau\tau} = {V}_{\tau\tau}+ S_{\tau}^2 - S_{\tau\mid \bm{X}}^2 \geq {V}_{\tau\tau}$
and $\tilde{V}_{\tau\tau,\infty}$ be the limit of $\tilde{V}_{\tau\tau}$. Under ReM, according to Lemma \ref{lemma:unbiased_sample_var}, $\hat{V}_{\tau\tau}$ is asymptotically unbiased for $\tilde{V}_{\tau\tau}$, 
and 
$$\hat{V}_{\tau\tau}\hat{R}^2 = r_1^{-1}s_{Y(1)\mid \bm{X}}^2+r_0^{-1}s_{Y(0)\mid \bm{X}}^2
- \left(\bm{s}_{Y(1),\bm{X}}-\bm{s}_{Y(0),\bm{X}}\right) \left(\bm{S}_{\bm{X}}^2\right)^{-1}\left(\bm{s}_{\bm{X},Y(1)}-\bm{s}_{\bm{X},Y(0)}\right)
$$
is asymptotically unbiased for ${V}_{\tau\tau}R^2$.
Thus, the sampling variance estimator is asymptotically unbiased for 
\begin{align*}
\tilde{V}_{\tau\tau} - (1-v_{K,a})V_{\tau\tau}R^2 \geq {V}_{\tau\tau} - (1-v_{K,a})V_{\tau\tau}R^2 \rightarrow \Var_{\text{a}}\left\{
\sqrt{n}(\hat{\tau}_Y-\tau_Y)
\mid \sqrt{n}\hat{\bm{\tau}}_{\bm{X}} \in \mathcal{M}
\right\}.
\end{align*}
Under ReMT, according to Lemma \ref{lemma:unbiased_sample_var}, $\hat{V}_{\tau\tau}$ is asymptotically unbiased for $\tilde{V}_{\tau\tau}$, and $\hat{V}_{\tau\tau}\hat{\rho}_{{t}}^2$ is asymptotically unbiased for ${V}_{\tau\tau}\rho_{{t}}^2$. Thus, the sampling variance estimator is asymptotically unbiased for 
\begin{align*}
\tilde{V}_{\tau\tau} - \sum_{t=1}^{T}(1-v_{k_t, a_t}){V}_{\tau\tau}{\rho}^2_{t}
 \geq {V}_{\tau\tau} - \sum_{t=1}^{T}(1-v_{k_t, a_t}){V}_{\tau\tau}{\rho}^2_{t} \rightarrow
 {\Var}_{\text{a}}\left\{
\sqrt{n}(\hat{\tau}_Y-\tau_Y)
\mid \sqrt{n}\hat{\bm{\tau}}_{\bm{X}} \in \mathcal{T}
\right\}.
\end{align*}
Under ReG, according to Lemma \ref{lemma:unbiased_sample_var}, $\hat{V}_{\tau\tau}$ is asymptotically unbiased for $\tilde{V}_{\tau\tau}$, 
$\hat{V}_{\tau\tau}\hat{R}^2$ is asymptotically unbiased for ${V}_{\tau\tau}R^2$, 
and $\hat{\bm{V}}_{\tau \bm{x}}$ is asymptotically unbiased for $\bm{V}_{\tau \bm{x}}$. Therefore, 
the sampling variance estimator is asymptotically unbiased for 
\begin{align*}
& \tilde{V}_{\tau\tau} -V_{\tau\tau}R^2 + \bm{V}_{\tau \bm{x}}\bm{V}_{\bm{xx}}^{-1}  \bm{V}_{\bm{xx},\phi} \bm{V}_{\bm{xx}}^{-1}\bm{V}_{\bm{x}\tau}\\
\geq & 
{V}_{\tau\tau}(1-R^2) + \bm{V}_{\tau \bm{x}}\bm{V}_{\bm{xx}}^{-1}  \bm{V}_{\bm{xx},\phi} \bm{V}_{\bm{xx}}^{-1}\bm{V}_{\bm{x}\tau}\rightarrow
 {\Var}_{\text{a}}\left\{
\sqrt{n}(\hat{\tau}_Y-\tau_Y)
\mid \sqrt{n}\hat{\bm{\tau}}_{\bm{X}} \in \mathcal{G}
\right\}.
\end{align*}
Above all, the sampling variance estimators are asymptotically conservative.

\subsection{Conservativeness of confidence interval}\label{sec:conser_ci_proof}
First, we consider ReM. 
According to Lemma \ref{lemma:unbiased_sample_var}, 
\begin{align*}
\sqrt{\hat{V}_{\tau\tau}}\left( 
\sqrt{1-\hat{R}^2}\cdot \varepsilon_0 + \sqrt{\hat{R}^2}\cdot L_{K,a}
\right) \converged
\sqrt{\tilde{V}_{\tau\tau,\infty}-V_{\tau\tau,\infty}{R}_{\infty}^2}\cdot \varepsilon_0 + \sqrt{V_{\tau\tau,\infty}{R}_{\infty}^2} \cdot L_{K,a}.
\end{align*}
Thus $\nu_{1-\alpha/2}(\hat{R}^2)\sqrt{\hat{V}_{\tau\tau}}$ is consistent for the $(1-\alpha/2)$th quantile of the distribution on the right hand side of the above formula, which is larger than or equal to
$\nu_{1-\alpha/2}({R}_{\infty}^2)\sqrt{{V}_{\tau\tau,\infty}}$  due to Proposition \ref{prop::curious} and Lemma \ref{lemma:order_sum}.

Second, we consider ReMT. According to Lemma \ref{lemma:unbiased_sample_var},
\begin{align*} 
\sqrt{\hat{V}_{\tau\tau}}
\left(
\hat{\rho}_{\Tau+1} \varepsilon_0 + 
\sum_{t=1}^{T}
\hat{\rho}_{t}
L_{k_t, a_t}
\right) \converged
\sqrt{\tilde{V}_{\tau\tau,\infty}-V_{\tau\tau,\infty} + V_{\tau\tau,\infty}\rho_{\Tau+1,\infty}^2}\cdot \varepsilon_0 + 
\sum_{t=1}^{T}
\sqrt{V_{\tau\tau,\infty}\rho_{t,\infty}^2}\cdot
L_{k_t, a_t}.
\end{align*}
Thus $\nu_{1-\alpha/2}(\hat{\rho}_{1}^2,\ldots,\hat{\rho}_{T}^2)\sqrt{\hat{V}_{\tau\tau}}$ is consistent for the $(1-\alpha/2)$th quantile of the distribution on the right hand side of above formula, which is larger than or equal to
$\nu_{1-\alpha/2}({\rho}_{1,\infty}^2,\ldots,{\rho}_{T,\infty}^2)\sqrt{{V}_{\tau\tau,\infty}}$ due to Proposition \ref{prop::curious}, and Lemmas \ref{lemma:order_sum} and  \ref{lemma:SDA_sum}.

Finally, we consider ReG, and construct the confidence interval. Let 
$\hat{V}_{\varepsilon} =\hat{V}_{\tau\tau}(1-\hat{R}^2)$
be the variance estimator for $ \varepsilon$ in (\ref{eq:asym_gen_re}). 
Let ${q}_{\xi}(\lambda)$ be the $\xi$th quantile of $\sqrt{\lambda}\varepsilon_0 + \hat{\bm{V}}_{\tau \bm{x}}\bm{V}_{\bm{xx}}^{-1}\bm{B} \mid \bm{B} \in \mathcal{G}$, where $\varepsilon_0 \sim \mathcal{N}(0,1)$ is independent of $\bm{B} \sim \mathcal{N}(\bm{0}, \bm{V}_{\bm{xx}})$.
For any $\xi\geq 0.5$, let $\hat{q}_{\xi} = \max_{0\leq \lambda\leq \hat{V}_{\varepsilon}}{q}_{\xi}(\lambda)$. The final confidence interval for $\tau_Y$ is then 
$
\left[
\hat{\tau}_Y - \hat{q}_{1-\alpha/2}/\sqrt{{n}}, 
\hat{\tau}_Y +\hat{q}_{1-\alpha/2}/\sqrt{{n}}
\right].
$
According to Lemma \ref{lemma:unbiased_sample_var}, 
for any $\lambda\geq 0$,
\begin{align}\label{eq:quanti_weak_gen}
\sqrt{\lambda}\varepsilon_0 + \hat{V}_{\tau \bm{x}}\bm{V}_{\bm{xx}}^{-1}\bm{B} \mid \bm{B} \in \mathcal{G} \converged
\sqrt{\lambda}\varepsilon_0 +  {V}_{\tau \bm{x},\infty}\bm{V}_{\bm{xx},\infty}^{-1}\bm{B}_{\infty} \mid \bm{B}_{\infty} \in \mathcal{G}_{\infty},
\end{align}
where $\bm{B}_{\infty} \sim \mathcal{N}(\bm{0}, \bm{V}_{\bm{xx},\infty})$. 
Let $\omega_{1-\alpha/2}(\lambda)$ be the $(1-\alpha/2)$th quantile of the distribution on the right side of (\ref{eq:quanti_weak_gen}). Then 
${q}_{1-\alpha/2}(\lambda)$ is consistent for $\omega_{1-\alpha/2}(\lambda)$. 
According to Lemma \ref{lemma:unbiased_sample_var}, $\hat{V}_{\epsilon}$ is consistent for $\tilde{V}_{\tau\tau,\infty}-V_{\tau\tau,\infty}R_{\infty}^2\geq {V}_{\tau\tau,\infty}(1-R_{\infty}^2)$.
Under some regularity conditions,
\begin{align*}
\hat{q}_{1-\alpha/2} = \max_{0\leq \lambda\leq \hat{V}_{\varepsilon}}{q}_{1-\alpha/2}(\lambda) 
\overset{p}{\longrightarrow} 
\max_{0\leq \lambda\leq \tilde{V}_{\tau\tau,\infty}-V_{\tau\tau,\infty}R_{\infty}^2}{\omega}_{1-\alpha/2}(\lambda) \geq 
\omega_{1-\alpha/2}\left(V_{\tau\tau,\infty}(1-R_{\infty}^2)\right).
\end{align*}
When ${\bm{V}}_{\tau \bm{x}}\bm{V}_{\bm{xx}}^{-1}\bm{B} \mid \bm{B} \in \mathcal{G}$ is unimodal, 
$\hat{q}_{1-\alpha/2} = {q}_{1-\alpha/2}(\hat{V}_{\varepsilon})$ according to 
Lemma \ref{lemma:order_sum}.

%
%

\end{document}